\newtheorem{theorem}{Theorem}[section]
\newtheorem{lemma}[theorem]{Lemma}
\newtheorem{proposition}[theorem]{Proposition}
\newtheorem{definition}[theorem]{Definition}
\newtheorem{remark}[theorem]{Remark}
\newtheorem{remarks}[theorem]{Remarks}
\newtheorem{state}[theorem]{Statement}
\numberwithin{equation}{section}
\numberwithin{figure}{section}
\numberwithin{table}{section}
\definecolor{purple}{RGB}{127,0,255}
\newcommand{\cfr}{\color{red}}
\newcommand{\cfb}{\color{blue}}
\definecolor{lgray}{gray}{0.90}
\newcommand{\cllg}{\cellcolor{lgray}}
\newcommand{\noib}{\noindent $\bullet$}
\newcommand{\noid}{\noindent $\diamond$}
\newcommand{\N}{\mathbb{N}}
\newcommand{\Nb}{\mathbb{N^{\bullet}}}
\newcommand{\R}{\mathbb{R}}
\newcommand{\cD}{\mathcal{D}}
\newcommand{\cE}{\mathcal{E}}
\newcommand{\cH}{\mathcal{H}}
\newcommand{\cL}{\mathcal{L}}
\newcommand{\cP}{\mathcal{P}}
\newcommand{\cQ}{\mathcal{Q}}
\newcommand{\cR}{\mathcal{R}}
\newcommand{\cS}{\mathcal{S}}
\newcommand{\cT}{\mathcal{T}}
\newcommand{\cZ}{\mathcal{Z}}
\newcommand{\mf}{\mathfrak}
\newcommand{\rp}{\R\mathrm{P}}
\newcommand{\hlambda}{\hat{\lambda}}
\newcommand{\rhe}{{\cR}h_e}
\newcommand{\ecp}{\mathrm{\textsc{ECP}}}
\newcommand{\ps}[1]{\langle #1 \rangle}
\newcommand{\sm}{\!\setminus\!}
\newcommand{\poplus}{\overset{\bot}{\oplus}}
\newcommand{\pbigoplus}{\overset{\bot}{\bigoplus}}
\newcommand{\vsp}{\phantom{\big|}}
\DeclareMathOperator{\spc}{sp}
\DeclareMathOperator{\mtp}{mult}
\DeclareMathOperator{\ima}{img}
\begin{document}

\title[Courant nodal domain property]{On Courant's nodal domain property for\\ linear combinations of eigenfunctions\\ Part~{II}}

\author[P. B\'{e}rard]{Pierre B\'erard}
\author[B. Helffer]{Bernard Helffer}

\address{PB: Universit\'{e} Grenoble Alpes and CNRS\\
Institut Fourier, CS 40700\\ 38058 Grenoble cedex 9, France.}
\email{pierrehberard@gmail.com}

\address{BH: Laboratoire Jean Leray, Universit\'{e} de Nantes and CNRS\\
F44322 Nantes Cedex, France, and LMO, Universit\'e Paris-Sud.}
\email{Bernard.Helffer@univ-nantes.fr}

%\date{December 27, 2017 (\currfilename)}
\date{\today ~(\currfilename)}
%\date{\today}

\dedicatory{To Erik Balslev, in memoriam}

\thanks{ The authors express their hearty thanks to V.~Bonnaillie-No\"{e}l for providing numerical simulations at an earlier stage of their research.}

\keywords{Eigenfunction, Nodal domain, Courant nodal domain theorem.}

\subjclass[2010]{35P99, 35Q99, 58J50.}

\begin{abstract}
Generalizing Courant's nodal domain theorem, the ``Extended Courant property'' is the statement that a linear combination of the first $n$ eigenfunctions has at most $n$ nodal domains. In a previous paper (Documenta Mathematica, 2018, Vol. 23, pp. 1561--1585), we gave simple counterexamples to this property, including convex domains. In the present paper,  using some input from numerical computations, we pursue the investigation of the Extended Courant property with two new examples, the equilateral rhombus and the regular hexagon.
\end{abstract}%

\maketitle

\section[Introduction]{Introduction}\label{S-intro}

\subsection{Notation}\label{SS-intro-1}

Let $\Omega \subset \R^2$ be a piecewise smooth bounded open domain (we will actually only work with convex polygonal domains), with boundary $\partial \Omega = \overline{\Gamma_{1} \sqcup \Gamma_{2}}$, where $\Gamma_1, \Gamma_2$ are two disjoint open subsets of $\partial \Omega$. We consider the eigenvalue problem
\begin{equation}\label{E-int-2}
\left\{
\begin{array}{rll}
- \Delta u &= \mu \, u &\text{in~} \Omega \,, \\[5pt]
u &= 0 &\text{on~} \Gamma_{1}\,, \\[5pt]
\nu \cdot u &= 0 &\text{on~} \Gamma_{2} \,,
\end{array}%
\right.
\end{equation}
where $\nu$ is the outer unit normal along $\partial \Omega$ (defined almost everywhere).\medskip

Let $\lbrace \mu_i(\Omega, \mf{dn}), i \ge 1 \rbrace$ (resp. $\spc(\Omega, \mf{dn})$) denote the eigenvalues (resp. the spectrum) of problem \eqref{E-int-2}. We always list the eigenvalues in non-decreasing order, with multiplicities, starting with the index $1$.
We simply write $\mu_i$, and skip mentioning the domain $\Omega$, or the boundary condition $\mf{dn}$, whenever the context is clear. Examples of eigenvalue problems with mixed boundary conditions appear in Sections~\ref{S-rhome} and \ref{S-hex}. \medskip

Let $\cE\left( \mu \right)$ denote the eigenspace associated with the eigenvalue $\mu$. \medskip

Define the \emph{min-index} $\kappa(\mu)$ of the eigenvalue $\mu$ as
\begin{equation}\label{E-int-8a}
\kappa(\mu) = \min \left\lbrace m ~|~ \mu = \mu_m \right\rbrace \,.
\end{equation}

\subsection{Courant's nodal domain theorem}\label{SS-int-2}

Let $\phi$ be an eigenfunction of \eqref{E-int-2}. The \emph{nodal set} $\cZ(\phi)$ of $\phi$ is defined as the closure of the set of (interior) zeros of $\phi$,
\begin{equation}\label{E-int-4}
\cZ(\phi) := \overline{\left\lbrace x \in \Omega ~|~ \phi(x) = 0 \right\rbrace}\,.
\end{equation}

A \emph{nodal domain} of $\phi$ is a connected component of the set $\Omega \sm \cZ(\phi)$. Call $\beta_0(\phi)$ the number of nodal domains of $\phi$. We recall the following classical theorem, \cite[Chap.~{VI.6}]{CH1953}.

\begin{theorem}[Courant, 1923]\label{T-int-2}
Assume that the eigenvalues of \eqref{E-int-2} are listed in non-decreasing order, with multiplicities,
\begin{equation}\label{E-int-6}
\mu_1 < \mu_2 \le \mu_3 \le \cdots \,.
\end{equation}
Then, for any eigenfunction $\phi \in \cE(\mu)$ of \eqref{E-int-2}, associated with the eigenvalue $\mu$,
\begin{equation}\label{E-int-6a}
\beta_0(\phi) \le \kappa(\mu) \,.
\end{equation}
In particular,  any $\phi \in \cE(\mu_k)$ has a most $k$ nodal domains,
\end{theorem}%

Courant's theorem is a partial generalization, to higher dimensions, of a classical theorem of C.~Sturm (1836). Indeed, in dimension $1$, a $k$-th eigenfunction of the Sturm-Liouville operator $-\frac{d^2}{dx^2} + q(x)$ in $]a,b[$, with Dirichlet, Neumann, or mixed Dirichlet-Neumann boundary condition at $\{a,b\}$, has exactly $k$ nodal domains in $]a,b[$. In dimension $2$ (or higher), Courant's theorem is not sharp. On the one hand, A.~Stern (1925) proved that for the square with Dirichlet boundary condition, or for the $2$-sphere, there exist eigenfunctions of arbitrarily high energy, with exactly two or three nodal domains. On the other hand, {\AA}.~Pleijel (1956) proved that, for any bounded domain in $\R^2$,  there are only finitely many Dirichlet eigenvalues for which Courant's theorem is sharp. We refer to \cite{BH2015-tsg,Ple1956} for more details, and to \cite{Len2016} for Pleijel's estimate under Neumann boundary condition.\medskip

Another remarkable theorem of Sturm states that any non trivial linear combination $u = \sum_{k=m}^{n} a_j u_j$ of eigenfunctions of the operator $-\frac{d^2}{dx^2} + q(x)$ has at most $(n-1)$ zeros (counted with multiplicities), and at least $(m-1)$ sign changes in the interval $]a,b[$, see \cite{BH2018-sturm}.\medskip

A footnote in \cite[p.~454]{CH1953} states that Courant's theorem \emph{may be generalized as follows: Any linear combination of the first $n$ eigenfunctions divides the domain, by means of its nodes, into no more than $n$ subdomains. See the G\"{o}ttingen dissertation of H.~Herrmann, Beitr\"{a}ge zur Theorie der Eigenwerten und Eigenfunktionen, 1932.}\medskip

For later reference, we introduce the following definition.

\begin{definition}\label{D-iecp-2}
We say that the \emph{Extended Courant property} is true for the eigenvalue problem $(\Omega,\mf{b})$, or simply that the $\ecp(\Omega,\mf{b})$ is true, if,
for any $m \ge 1$, and for any linear combination $v = \sum_{\mu_j \le \mu_m} u_{\mu_j}$, with $u_{\mu_j} \in \cE\big(\mu_j(\Omega,\mf{b})\big)$,
\begin{equation}\label{E-iecp-4}
\beta_0(v) \le \kappa(\mu_m) \le m\,.
\end{equation}
\end{definition}%

The footnote statement in the book of Courant and Hilbert, amounts to saying that $\ecp(\Omega)$ is true for any bounded domain. Already in 1956, Pleijel \cite[p.~550]{Ple1956} mentioned that he could not find a proof of this statement in the literature. In 1973, V.~Arnold \cite{Arn1973,Arn2014} related the statement in Courant-Hilbert to Hilbert's 16th problem. Indeed, should $\ecp(\rp^N,g_0)$ be true (where $g_0$ is the usual metric), then the complement of any algebraic hypersurface of degree $n$ in $\rp^N$ would have at most $\binom{N}{N+n-2}+1$ connected components. Arnold pointed out that while $\ecp(\rp^2,g_0)$ is indeed true, $\ecp(\rp^3,g_0)$ is false due to counterexamples produced by O.~Viro \cite{Vir1979}. More recently, Gladwell and Zhu \cite[p.~276]{GZ2003} remarked that Herrmann in his dissertation and subsequent publications \emph{had not even stated, let alone proved} the $\ecp$. They also produced some numerical evidence that the $\ecp$ is false for non-convex domains in $\R^2$ with the Dirichlet boundary condition, and conjectured that it is true for convex domains.\medskip

Our motivations to look into the Extended Courant property came from reading the papers \cite{Arn2011,GZ2003,Kuz2015}. In \cite{BH2018-ecp1}, we gave simple counterexamples to the $\ecp$ for domains with the Dirichlet or the Neumann boundary conditions (equilateral triangle, hypercubes, domains and surfaces with cracks). This was made possible by the fact that the eigenvalues and eigenfunctions of these domains are known explicitly. In \cite{BH2018-teqa}, we proved that $\ecp(\Omega,\mf{n})$ is false for a continuous family of smooth convex domains in $\R^2$, with the symmetries of, and close to the equilateral triangle.\medskip

In the present paper, we continue our investigations of the Extended Courant property by studying two examples, the equilateral rhombus $\rhe$ and the regular hexagon $\cH$, which are related to the equilateral triangle. The eigenvalues and eigenfunctions of these domains are not known explicitly (except for a small subset of them). Using the symmetries of these domains,  and some input from numerical computations, we are able to describe the nodal patterns of the first eigenfunctions, and conclude that the equilateral rhombus and the regular hexagon provide counterexamples to the $\ecp$. \medskip

The paper is organized as follows. In Section~\ref{S-rhome}, we analyze the structure of the first eigenvalues and eigenfunctions of the equilateral rhombus $\rhe$ with either the Neumann or the Dirichlet boundary condition. Subsections~\ref{SS-rhome-p}, \ref{SS-refpr} and \ref{SS-use} provide technical ideas which are used in Section~\ref{S-hex} as well. In Subsection~\ref{SS-rhe-ecp-n}, we prove that $\ecp(\rhe,\mf{n})$ is false. In Subsection~\ref{SS-rhe-ecp-d}, we give numerical evidence that $\ecp(\rhe,\mf{d})$ is false as well.  In Section~\ref{S-hex}, we analyze the structure of the first eigenvalues and eigenfunctions of the regular hexagon $\cH$ with either the Neumann or the Dirichlet boundary condition. In Subsection~\ref{SS-ch-ecp-d}, we give numerical evidence that $\ecp(\cH,\mf{d})$ is false. In Subsection~\ref{SS-ch-ecp-n}, we give numerical evidence that $\ecp(\cH,\mf{n})$ is false.  In Section~\ref{S-final}, we explain our numerical approach, and we make some final remarks and conjectures.

\section{The equilateral rhombus}\label{S-rhome}

\subsection{Symmetries and spectra}\label{SS-rhome-p}
%\subsection{Preparation}\label{SS-rhome-p}%

In this subsection, we analyze how symmetries influence the structure of the eigenvalues and eigenfunctions. The analysis is carried out for the equilateral rhombus, but the basics ideas work for the regular hexagon as well, and will be used in Section~\ref{S-hex}.\medskip

In the sequel, we denote by the same letter $L$ a line in $\R^2$, and the mirror symmetry with respect to this line. We denote by $L^*$ the action of the symmetry $L$ on functions, $L^*\phi = \phi\circ L$. \medskip

A function $\phi$ is \emph{even} (or \emph{invariant}) with respect to $L$ if $L^*\phi = \phi$. It is \emph{odd} (or \emph{anti-invariant}) with respect to $L$ if $L^*\phi = -\phi$. In the former case, the line $L$ is an \emph{anti-nodal} line for $\phi$, i.e., the normal derivative $\nu_L \cdot \phi$ is zero along $L$, where $\nu_L$ denotes a unit field normal to $L$ along $L$. In the latter case, the line $L$ is a \emph{nodal} line for $\phi$, i.e., $\phi$ vanishes along $L$.\medskip

Let  $\rhe$ be the  interior of the equilateral rhombus with si\-des of length $1$, and vertices  $(-\frac{\sqrt{3}}{2},0)$, $(0,-\frac{1}{2})$, $(\frac{\sqrt{3}}{2}),0)$ and $(0,\frac{1}{2})$.  Call $D$ and $M$ its diagonals (resp. the longer one and the shorter one). The diagonal $M$ divides the rhombus into two equilateral triangles. The diagonals $D$ and $M$ divide the rhombus into four hemiequilateral triangles. In the sequel, we use the generic notation $\cT_e$ (resp. $\cT_h$) for  any of the equilateral triangles (resp. hemiequilateral triangles) into which the rhombus decomposes, see Figure~\ref{F-rhome-1}.

\vspace{-5mm}
\begin{figure}[!hbt]
\centering
\includegraphics[scale=0.3]{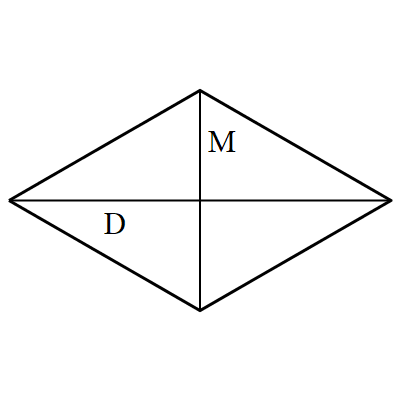}\vspace{-8mm}
\caption{The equilateral rhombus $\rhe$, and its diagonals}\label{F-rhome-1}
\end{figure}

% \begin{figure}[!hbt]
% \centering
% \includegraphics[scale=0.3]{rhome-decomp.png}\vspace{-8mm}
% \caption{Decomposition of the equilateral rhombus $\rhe$}\label{F-rhome-2}
% \end{figure}

For $L \in \{D,M\}$, define the sets
\begin{equation}\label{E-rhe-2}
\left\{
\begin{array}{ll}
\cS_{L,+} &= \left\lbrace \phi \in L^2(\rhe) ~|~ L^*\phi = + \, \phi \right\rbrace ,\\[5pt]
\cS_{L,-} &= \left\lbrace \phi \in L^2(\rhe) ~|~ L^*\phi = - \, \phi \right\rbrace .
\end{array}%
\right.
\end{equation}
Then, we have the orthogonal decomposition,
\begin{equation}\label{E-rhe-4}
L^2(\rhe) = \cS_{L,+} \poplus \cS_{L,-}\,,
\end{equation}
with respect to the $L^2$-inner product. Indeed, any $\phi \in L^2(\rhe)$ can be decomposed as
\begin{equation}\label{E-rhe-4a}
\phi = \frac 12 (I + L^*)\phi + \frac 12 (I - L^*)\phi \,,
\end{equation}
where $I$ denotes the identity map. \medskip

The symmetries $D$ and $M$ commute
\begin{equation}\label{E-rhe-6}
M \circ D = D \circ M = R_{\pi}\,,
\end{equation}
where $R_{\theta}$ denotes the rotation with center $0$ (the center of the rhombus), and angle $\theta$. It follows that $D^*$ leaves the subspaces $\cS_{M,\pm}$ globally invariant, and that $M^*$ leaves the subspaces $\cS_{D,\pm}$ globally invariant. As a consequence, we have the orthogonal decomposition,
\begin{equation}\label{E-rhe-8}
L^2(\rhe)= \cS_{+ , +} \poplus \cS_{- , -} \poplus \cS_{+ , -} \poplus \cS_{- , +} \,,
\end{equation}
where
\begin{equation}\label{E-rhe-10}
\cS_{\sigma , \tau} := \left\lbrace  \phi \in L^2(\rhe) ~|~ D^* \phi = \sigma \, \phi \text{~and~} M^* \phi = \tau \, \phi
\right\rbrace ,
\end{equation}
for $\sigma, \tau \in \{+ \,, -\}\,.$\medskip

Similar decompositions hold for $H^1(\rhe)$ and $H^1_0(\rhe)$, the Sobolev spaces  which are used in the variational presentation of the Neumann (resp. Dirichlet) eigenvalue problem for the rhombus. \medskip

In the following figures, anti-nodal lines are indicated by dashed lines, and nodal lines by solid lines. Figure~\ref{F-hsymS} displays the nodal and anti-nodal lines common to all functions in $H^1(\rhe) \cap \cS_{\sigma , \tau}$, where $\sigma, \tau \in \{+ , - \}$.

\begin{figure}[htb!]
  \centering
  \includegraphics[scale=0.5]{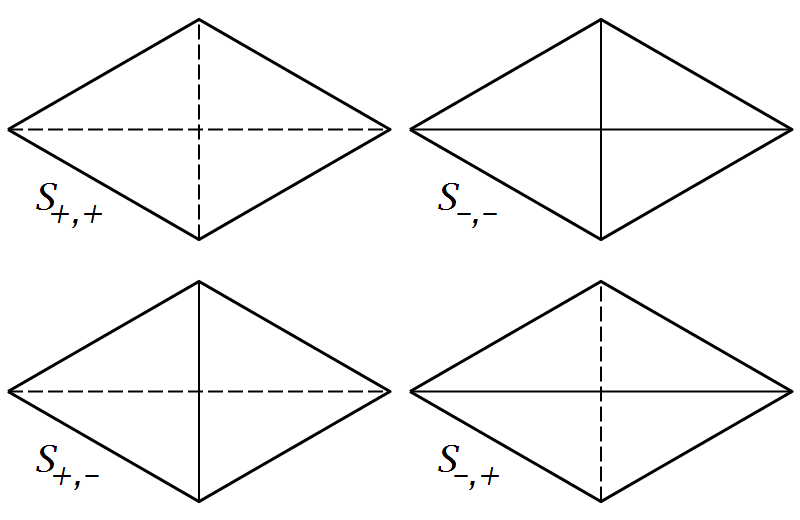}
  \caption{Spaces $\cS_{\sigma,\tau}$ for $\rhe$}\label{F-rheS}
\end{figure}

\FloatBarrier

Because the Laplacian commutes with the isometries $D$ and $M$, the above orthogonal decompositions descend to each eigenspace of $-\Delta$ for $\rhe$, with the boundary condition $\mf{b} \in \{\mf{d,n}\}$ on $\partial\rhe$. The eigenfunctions in each summand correspond to eigenfunctions of $-\Delta$ for the equilateral triangle (decomposition \eqref{E-rhe-4} with $L=M$), or for the hemiequilateral triangle (decomposition \eqref{E-rhe-10}), with the boundary condition $\mf{b}$ on the side supported by $\partial \rhe$, and with mixed boundary conditions, either Dirichlet or Neumann, on the sides supported by the diagonals.\medskip

To  be more explicit, we need naming the eigenvalues as in Subsection~\ref{SS-intro-1}.  For this purpose, we partition the boundaries of $\cT_e$ and $\cT_h$ into their three sides. For $\cT_h$, we number the sides $1$, $2$, $3$, in decreasing order of length, see Figure~\ref{F-Tsides}. For example, $\mu_i(\cT_h,\mathfrak n \mathfrak d \mathfrak n)$ denotes the $i$-th eigenvalue of $-\Delta$ in $\cT_h$ with Neumann boundary condition on the longest (1) and shortest (3) sides, and Dirichlet boundary condition on the other side (2).

\begin{figure}[!hbt]
  \centering
  \includegraphics[scale=0.25]{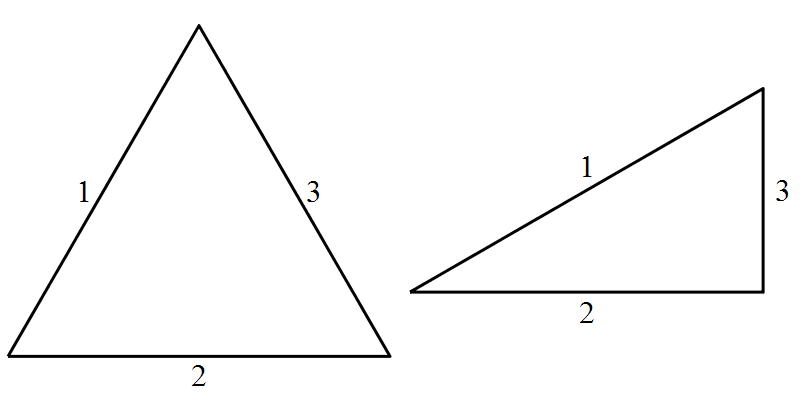}\vspace{-3mm}
  \caption{Labelling the sides of $\cT_e$ and $\cT_h$}\label{F-Tsides}
\end{figure}\medskip

\subsection{Riemann-Schwarz reflection principle}\label{SS-refpr}

In this subsection, we recall the  ``Riemann-Schwarz reflection principle''  which we will use repeatedly in the sequel.\medskip

Consider the decomposition $\rhe = \cT_{e,1} \bigsqcup \cT_{e,2}$, with $M(\cT_{e,1}) = \cT_{e,2}$. Choose a boundary condition $\mf{a} \in \{\mf{d,n}\}$ on $\partial \rhe$. Given an eigenvalue $\lambda$ of $-\Delta$ for $(\rhe,\mf{a})$, and $\sigma \in \{+,-\}$, consider the subspace $\cE(\lambda) \cap \cS_{M,\sigma}$ of eigenfunctions $\phi \in \cE(\lambda)$ such that $M^*\phi = \sigma \phi\,$.\medskip

If $0 \not = \phi \in \cE(\lambda) \cap \cS_{M,\sigma}$, then $\phi|\cT_{e,1}$ is an eigenfunction of $-\Delta$ for $(\cT_{e,1},\mf{aab})$, with $\mf{b}=\mf{n}$ if $\sigma = +$, and $\mf{b}=\mf{d}$ if $\sigma = -$,  associated with the same eigenvalue $\lambda\,$.\smallskip

Conversely, let $\psi$ be an eigenfunction of $(\cT_{e,1},\mf{aab})$, with eigenvalue $\mu_m(\cT_{e,1},\mf{aab})$, for some $m \ge 1$. Define the function $\check{\psi}$ on $\rhe$ such that $\check{\psi}|\cT_{e,1}=\psi$ and $\check{\psi}|\cT_{e,2}=\sigma \, \psi \circ M$. This means that $\check{\psi}$ is obtained by extending $\psi$ across $M$ to $\cT_{e,2}$ by symmetry, in such a way that $M^*\check{\psi} = \sigma \check{\psi}\,$. It is easy to see that the function $\check{\psi}$ is an eigenfunction of $-\Delta$ for $(\rhe,\mf{a})$ (in particular it is smooth in a neighborhood of $M$), with eigenvalue $\mu_m(\cT_{e,1},\mf{aab})$, so that $\check{\psi} \in \cE(\mu_m) \cap \cS_{M,\sigma}\,$.\medskip

The above considerations prove the first two assertions in the following proposition. The proof of the third and fourth assertions is similar, using the symmetries $D$ and $M$, and the decomposition of $\rhe$ into hemiequilateral triangles $\cT_{h,j}, 1\le j \le 4\,$.

\begin{proposition}[Reflection principle]\label{P-refpr-2}
 For any $\mf{a} \in\{\mf{d,n}\}$ and any   $\lambda \in \spc(\rhe,\mf{a})$,
\begin{enumerate}
  \item[(i)] $\cE(\lambda,(\rhe,\mf{a})) \cap \cS_{M,+} \not = \{0\}$ if and only if $\lambda \in \spc(\cT_e,\mf{aan})$, and the map $\phi \mapsto \phi|\cT_{e,1}$ is a bijection from $\cE(\lambda,(\rhe,\mf{a})) \cap \cS_{M,+}$ onto $\cE(\lambda,(\cT_e,\mf{aan}))$;
  \item[(ii)] $\cE(\lambda,(\rhe,\mf{a})) \cap \cS_{M,-} \not = \{0\}$ if and only if $\lambda \in \spc(\cT_e,\mf{aad})$, and the map $\phi \mapsto \phi|\cT_{e,1}$ is a bijection from $\cE(\lambda,(\rhe,\mf{a})) \cap \cS_{M,-}$ onto $\cE(\lambda,(\cT_e,\mf{aad}))$.
\end{enumerate}
More generally, define $\epsilon(\mf{n})=+$ and $\epsilon(\mf{d})=-$. Then, for any $ \lambda \in \spc(\rhe,\mf{a})$, and any $\mf{b,c} \in \{\mf{d,n}\}$,
\begin{enumerate}
  \item[(iii)] $\cE(\lambda,(\rhe,\mf{a})) \cap \cS_{\epsilon(\mf{b}),\epsilon(\mf{c})} \not = \{0\}$ if and only if $\lambda \in \spc(\cT_h,\mf{abc})$,\break and the map $\phi \mapsto \phi|\cT_{h,1}$ is a bijection from $\cE(\lambda,(\rhe,\mf{a})) \cap \cS_{\epsilon(\mf{b}),\epsilon(\mf{c})}$ onto $\cE(\lambda,(\cT_h,\mf{abc}))$.
\end{enumerate}
Furthermore, the multiplicity of the number $\lambda$ as eigenvalue of $(\rhe,\mf{a})$ is the sum, over $\mf{b,c} \in \{\mf{d,n}\}$, of the multiplicities of $\lambda$ as eigenvalue of $(\cT_h,\mf{abc})$ (with the convention that the multiplicity is zero if $\lambda$ is not an eigenvalue).
\end{proposition}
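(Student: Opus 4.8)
The plan is to turn the reflection discussion preceding the statement into a precise isomorphism of eigenspaces, and then read off the four equivalences and the multiplicity count. Throughout put $\epsilon(\mf n)=+$ and $\epsilon(\mf d)=-$.

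I would first treat (i)--(ii) together. Fix $\mf b\in\{\mf d,\mf n\}$ and consider the restriction map $\rho\colon \phi\mapsto \phi|\cT_{e,1}$ on $\cE(\lambda,(\rhe,\mf a))\cap\cS_{M,\epsilon(\mf b)}$. The triangle $\cT_{e,1}$ has two sides on $\partial\rhe$, on which $\phi$ satisfies the condition $\mf a$, and one side on the mirror line of $M$; if $M^*\phi=+\phi$ then $\phi$ is even across that line, so $\partial_\nu\phi=0$ there, whereas if $M^*\phi=-\phi$ then $\phi=0$ there. In either case $\phi|\cT_{e,1}$ lies in the form domain of $(\cT_e,\mf{aab})$, solves $-\Delta(\phi|\cT_{e,1})=\lambda\,\phi|\cT_{e,1}$, and is nonzero whenever $\phi$ is, since $\phi|\cT_{e,2}=\epsilon(\mf b)\,\phi\circ M$. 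Conversely, given an eigenfunction $\psi$ of $(\cT_e,\mf{aab})$ for the eigenvalue $\lambda$, set $\check\psi|\cT_{e,1}=\psi$ and $\check\psi|\cT_{e,2}=\epsilon(\mf b)\,\psi\circ M$; the matching of $\psi$ with $\epsilon(\mf b)\,\psi\circ M$ along the line of $M$ (the values agree when $\mf b=\mf n$, both vanish when $\mf b=\mf d$) makes $\check\psi$ continuous on $\rhe$, hence in the form domain of $(\rhe,\mf a)$ and a weak solution of $-\Delta\check\psi=\lambda\check\psi$; elliptic regularity, equivalently the classical reflection principle for $-\Delta$ across the line of $M$, makes it smooth in the interior, so $\check\psi\in\cE(\lambda,(\rhe,\mf a))$ with $M^*\check\psi=\epsilon(\mf b)\check\psi$. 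The maps $\rho$ and $\psi\mapsto\check\psi$ are mutually inverse, so $\rho$ is a linear isomorphism, and in particular $\cE(\lambda,(\rhe,\mf a))\cap\cS_{M,\epsilon(\mf b)}\neq\{0\}$ iff $\lambda\in\spc(\cT_e,\mf{aab})$; taking $\mf b=\mf n$ gives (i) and $\mf b=\mf d$ gives (ii).

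For (iii) I would run the same argument with $\langle D,M\rangle$ in place of $\langle M\rangle$. Since $D$ and $M$ are commuting involutions (with perpendicular axes), $\langle D,M\rangle=\{1,D,M,DM\}\cong(\Z/2\Z)^2$ acts simply transitively on the four triangles $\cT_{h,1},\dots,\cT_{h,4}$, and for $\mf b,\mf c\in\{\mf d,\mf n\}$ the assignment $D\mapsto\epsilon(\mf b),\ M\mapsto\epsilon(\mf c)$ extends to a character $\chi$ of that group; this is exactly what makes the $\chi$-equivariant extension of a function on $\cT_{h,1}$ to all of $\rhe$ well defined. Now $\cT_{h,1}$ carries its side of length $1$ on $\partial\rhe$ (the side labelled $1$, condition $\mf a$), its side of length $\frac{\sqrt{3}}{2}$ on the long diagonal $D$ (side $2$), and its side of length $\frac{1}{2}$ on the short diagonal $M$ (side $3$); for $\phi\in\cE(\lambda,(\rhe,\mf a))\cap\cS_{\epsilon(\mf b),\epsilon(\mf c)}$, evenness/oddness of $\phi$ across $D$ forces condition $\mf b$ on side $2$ and across $M$ forces condition $\mf c$ on side $3$, so $\phi|\cT_{h,1}$ is an eigenfunction of $(\cT_h,\mf{abc})$; conversely the $\chi$-equivariant reflection of any such eigenfunction is, as before, a continuous $H^1$ weak eigenfunction of $(\rhe,\mf a)$ in $\cS_{\epsilon(\mf b),\epsilon(\mf c)}$, smooth in the interior of $\rhe$ — including at the crossing point of $D$ and $M$, which is absorbed into interior elliptic regularity. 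Hence $\phi\mapsto\phi|\cT_{h,1}$ is an isomorphism $\cE(\lambda,(\rhe,\mf a))\cap\cS_{\epsilon(\mf b),\epsilon(\mf c)}\to\cE(\lambda,(\cT_h,\mf{abc}))$, which is (iii).

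For the multiplicity identity, $D^*$ and $M^*$ commute with $\Delta$ and with each other and square to the identity, so each eigenspace splits as the orthogonal sum $\cE(\lambda,(\rhe,\mf a))=\bigoplus_{\sigma,\tau\in\{+,-\}}\big(\cE(\lambda,(\rhe,\mf a))\cap\cS_{\sigma,\tau}\big)$; applying the isomorphism of (iii) to each of the four summands and summing over $(\mf b,\mf c)\in\{\mf d,\mf n\}^2$ yields $\mtp(\lambda,(\rhe,\mf a))=\sum_{\mf b,\mf c}\mtp(\lambda,(\cT_h,\mf{abc}))$, with the convention that a multiplicity is $0$ when $\lambda$ is not an eigenvalue. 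The only step that is not pure bookkeeping — and the one I would be most careful about — is the regularity of the reflected function: one must check that gluing by the signed reflection produces not merely a continuous (or $H^1$) function but a genuine smooth eigenfunction on all of $\rhe$. This is the reflection principle for $-\Delta$, even extension across a Neumann side and odd extension across a Dirichlet side, applied to one diagonal at a time, with the point where the two diagonals meet handled by interior elliptic regularity for the resulting $H^1$ weak solution.
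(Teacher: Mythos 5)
Your proposal is correct and follows essentially the same route as the paper: restriction to a fundamental triangle in one direction, signed (even/odd) reflection across the diagonals in the other, and the orthogonal decomposition of $\cE(\lambda)$ into the four symmetry types for the multiplicity count. The only difference is that you spell out the regularity of the reflected extension and the treatment of the diagonals' crossing point, which the paper dismisses with ``it is easy to see'' and ``the proof \dots is similar''.
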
%

\subsection{Some useful results}\label{SS-use}

In this subsection, we recall some known results for the reader's convenience.

\subsubsection{Eigenvalue inequalities}

 The following proposition is a particular case of a result of V.~Lotoreichik and J.~Rohleder.

\begin{proposition}[ \cite{LoRo2017}, Proposition~2.3]\label{P-use-2}
Let $\Omega \subset \R^2$ be a polygonal bounded domain whose boundary is decomposed as $\partial \Omega = \overline{\Gamma_{1} \sqcup \Gamma_{2} \sqcup \Gamma_{3}}$, where the $\Gamma_{i}$'s are non-empty open subsets of $\partial \Omega$. Consider the eigenvalue problems for $-\Delta$ in $\Omega$, with the boundary condition $\mf{b}_i \in \{\mf{d,n}\}$ on $\Gamma_{i}$, and list the eigenvalues $\mu_j(\Omega,\mf{b_1b_2b_3})$ in non-decreasing order, with multiplicities, starting from the index $1$.\\[3pt]
Then, for any $j \ge 1$, the following strict inequalities hold.
\begin{equation}\label{E-use-2n}
\left\{
\begin{array}{l}
\mu_j(\Omega,\mf{nnn}) <  \mu_j(\Omega,\mf{ndn}) < \mu_j(\Omega,\mf{ndd}) \,,\\[5pt]
\mu_j(\Omega,\mf{nnn}) <  \mu_j(\Omega,\mf{nnd}) < \mu_j(\Omega,\mf{ndd}) \,,
\end{array}
\right.
\end{equation}
and
\begin{equation}\label{E-use-2d}
\left\{
\begin{array}{l}
\mu_i(\cT_h,\mf{dnn}) < \mu_i(\cT_h,\mf{ddn})  < \mu_i(\cT_h,\mf{ddd}) \,,\\[5pt]
\mu_i(\cT_h,\mf{dnn}) < \mu_i(\cT_h,\mf{dnd}) < \mu_i(\cT_h,\mf{ddd}) \,.
\end{array}
\right.
\end{equation}
\end{proposition}%

 The preceding inequalities can in particular be applied to the triangle $\cT_h$. In this particular case, when $j=1$, we have the following more precise inequalities which are due to B.~Siudeja.

\begin{proposition}[ \cite{Siu2016}, Theorem~1.1]\label{P-use-4}
The eigenvalues of $\cT_h$ with mixed boundary conditions are denoted by $\mu_i(\mf{abc})$, with the sides listed in decreasing order of length. They satisfy the following inequalities.
\begin{equation*}
\begin{array}{ll}
0 = \mu_1(\mf{nnn}) & < \mu_1(\mf{nnd}) < \mu_1(\mf{ndn}) = \mu_2(\mf{nnn}) < \mu_1(\mf{dnn}) \cdots \\[5pt]
&\cdots  < \mu_1(\mf{ndd}) < \mu_1(\mf{dnd}) < \mu_1(\mf{ddn}) < \mu_1(\mf{ddd}) \,.
\end{array}
\end{equation*}
\end{proposition}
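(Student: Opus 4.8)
The plan is to combine four ingredients: the domain-monotonicity inequalities of Proposition~\ref{P-use-2}, which settle any comparison obtained by turning a Neumann side into a Dirichlet side; a folding argument reducing the eigenvalue problems on $\cT_h$ to problems on the equilateral triangle $\cT_e$ restricted to symmetry subspaces; the explicitly known spectrum of $\cT_e$ together with the $S_3$-module structure of its low eigenspaces; and, for the comparisons that escape all of this, quantitative two-sided estimates (trial functions for upper bounds, Faber--Krahn or monotonicity against solvable domains for lower bounds).

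\emph{The folding step.} Recall that $\cT_h$ is one half of the unit equilateral triangle $\cT_e$, cut along an altitude which is exactly side~$2$ of $\cT_h$; let $R$ be the corresponding mirror symmetry of $\cT_e$, which bisects the side $s$ of $\cT_e$ it fixes (so side~$3$ of $\cT_h$ is half of $s$) and exchanges the other two sides $s',s''$ (one of them being side~$1$ of $\cT_h$). By the same reflection principle as in Proposition~\ref{P-refpr-2}, $\phi\mapsto\phi|_{\cT_h}$ is a bijection from the $R$-symmetric (resp.\ $R$-antisymmetric) eigenfunctions of $-\Delta$ on $\cT_e$ with boundary condition $\mf{a}$ on $s'\cup s''$ and $\mf{c}$ on $s$, onto the eigenfunctions of $(\cT_h,\mf{a}\mf{n}\mf{c})$ (resp.\ of $(\cT_h,\mf{a}\mf{d}\mf{c})$). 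Applying this with $\mf a=\mf c=\mf n$: the first nonzero Neumann eigenvalue $\nu_2(\cT_e)$ is double, its eigenspace carries the $2$-dimensional standard representation of $S_3$, and the transposition $R$ acts there with eigenvalues $+1,-1$; hence this eigenspace contains both an $R$-symmetric and an $R$-antisymmetric eigenfunction, nothing lies strictly between $0$ and $\nu_2(\cT_e)$, and therefore $\nu_2(\cT_h)=\nu_2(\cT_e)$ and $\mu_1(\cT_h,\mf{ndn})=\nu_2(\cT_e)=\nu_2$. The same argument for the Dirichlet spectrum of $\cT_e$ (first Dirichlet eigenfunction positive, hence $S_3$-invariant, in particular $R$-symmetric; first $R$-antisymmetric Dirichlet eigenfunction living in the double second eigenspace) gives $\mu_1(\cT_h,\mf{dnd})=\mu_1(\cT_e,\mf{ddd})=\tfrac{16\pi^2}{3}$ and $\delta_1=\mu_1(\cT_h,\mf{ddd})=\mu_2(\cT_e,\mf{ddd})$. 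Together with $\mu_1(\cT_h,\mf{nnn})=\nu_1=0$ (constants) this identifies four of the eight first eigenvalues; moreover $0=\nu_1<\mu_1(\mf{nnd})$ because a Dirichlet condition on side~$3$ excludes constants, and $\mu_1(\mf{ddn})<\mu_1(\mf{ddd})=\delta_1$ by Proposition~\ref{P-use-2}.

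\emph{The crossed inequalities.} What remains are $\mu_1(\mf{nnd})<\mu_1(\mf{ndn})$, $\mu_1(\mf{ndn})<\mu_1(\mf{dnn})$, $\mu_1(\mf{dnn})<\mu_1(\mf{ndd})$, $\mu_1(\mf{ndd})<\mu_1(\mf{dnd})$ and $\mu_1(\mf{dnd})<\mu_1(\mf{ddn})$; these relate the four boundary conditions with different conditions on sides~$1$ and~$3$, whose eigenfunctions have an $r^{3/2}$ corner singularity at the $60^\circ$ vertex and are not closed-form trigonometric eigenfunctions, so Proposition~\ref{P-use-2} and representation theory no longer suffice. The first two are still elementary. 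Folding $(\cT_h,\mf{nnd})$ across side~$2$ and testing with the ($R$-symmetric) linear function vanishing on $s$ gives $\mu_1(\mf{nnd})\le 8<\tfrac{16\pi^2}{9}=\nu_2$. Next, folding $(\cT_h,\mf{dnn})$ across side~$2$ (the ground state being $R$-symmetric) and then reflecting $\cT_e$ across its Neumann side produces the equilateral rhombus $\rhe$ with the Dirichlet condition, so $\mu_1(\mf{dnn})=\mu_1(\rhe,\mf d)$; Faber--Krahn on $\rhe$ then gives $\mu_1(\mf{dnn})>\pi j_{0,1}^2/|\rhe|>\nu_2$. For the last three, which compare the non-separable mixed eigenvalues with $\mu_1(\mf{dnd})=\tfrac{16\pi^2}{3}$ and among themselves, one needs sharper trial functions for the upper bounds (the naive product $y(\tfrac{\sqrt3}{2}-x)$ on $\cT_h$ only yields $\mu_1(\mf{ndd})\le 80$, which overshoots $\tfrac{16\pi^2}{3}\approx 52.6$) together with monotonicity and bracketing of $\mu_1(\cT_h,\cdot)$ against the equilateral rhombus and the half-equilateral triangle for the lower bounds, and then verification of the resulting finite chain of explicit numbers; alternatively one invokes directly the comparison machinery of Siudeja \cite{Siu2016}.

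The main obstacle is precisely this last point: the four ``mixed-type'' first eigenvalues have no closed form, so the crossed strict inequalities cannot be obtained by soft arguments and require quantitative control, the delicate part being the construction of trial functions (and of lower bounds) tight enough to pin down the order; once adequate estimates are in hand, assembling the full chain is routine.
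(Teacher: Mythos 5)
First, a point of comparison: the paper does not prove this proposition at all --- it is stated as a quotation of Siudeja \cite[Theorem~1.1]{Siu2016} --- so there is no internal argument to measure your proposal against. Judged on its own terms, your reconstruction of the ``soft'' part is correct: the folding of $\cT_h$ across side~$2$ onto the symmetry classes of $\cT_e$, the identifications $\mu_1(\mf{ndn})=\nu_2=\tfrac{16\pi^2}{9}$, $\mu_1(\mf{dnd})=\tfrac{16\pi^2}{3}$, $\delta_1=7\cdot\tfrac{16\pi^2}{9}$, the monotone comparisons supplied by Proposition~\ref{P-use-2}, the linear trial function giving $\mu_1(\mf{nnd})\le 8<\nu_2$, and the identification $\mu_1(\mf{dnn})=\mu_1(\rhe,\mf d)$ followed by Faber--Krahn (roughly $21>\tfrac{16\pi^2}{9}\approx 17.55$) are all sound.

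However, the proposal does not prove the proposition. The three crossed inequalities $\mu_1(\mf{dnn})<\mu_1(\mf{ndd})$, $\mu_1(\mf{ndd})<\mu_1(\mf{dnd})$ and $\mu_1(\mf{dnd})<\mu_1(\mf{ddn})$ are precisely the content that cannot be reached by domain monotonicity or explicit spectra, and you concede that your trial functions are not tight enough (an upper bound of $80$ against the target $\tfrac{16\pi^2}{3}\approx 52.6$, while the true value $\mu_1(\mf{ndd})\approx 47.6$ sits only about ten percent below it --- these comparisons are genuinely tight). The fallback ``invoke the comparison machinery of Siudeja'' is circular here, since the statement being proved \emph{is} Siudeja's theorem; his proof of exactly these strict inequalities rests on bespoke symmetrized trial functions adapted to each pair of boundary conditions, not on a generic bracketing against solvable domains. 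So the honest status of your writeup is: a correct reduction of the proposition to the three crossed inequalities, plus a citation for those --- which is, in effect, what the paper itself does by quoting \cite{Siu2016} outright.
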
%

\begin{remark}\label{R-use-P2}
We do not know whether there are any general inequalities between the eigenvalues $\mu_i(\cT_h,\mf{ndn})$ and $\mu_i(\cT_h,\mf{nnd})$, or between the eigenvalues $\mu_i(\cT_h,\mf{ddn})$ and $\mu_i(\cT_h,\mf{dnd})$, for $i \ge 2$.
\end{remark}%
\medskip

\subsubsection{Eigenvalues of some mixed boundary value problems for $\cT_h$} For later reference, we describe the eigenvalues of four mixed eigenvalue problems for $\cT_h$. This description follows easily from \cite{BH2016-lmp} or \cite[Appendix~A]{BH2018-ecp1}.\medskip

The eigenvalues of the equilateral triangle $\cT_e$, with either the Dirichlet or the Neumann boundary condition on $\partial \cT_e$, are the numbers
\begin{equation}\label{E-rhome-u2}
\hlambda(m,n) = \frac{16\pi^2}{9}\,(m^2+mn+n^2)\,,
\end{equation}
 with $(m, n) \in \N\times \N$ for the Neumann boundary condition, and $(m,n) \in \Nb\times\Nb$ for the Dirichlet boundary condition (here $\Nb = \N \sm\{0\}$). The multiplicities are given by,
\begin{equation}\label{E-rhome-u2a}
\mtp(\hlambda_0) = \# \left\lbrace (m,n) \in \cL ~|~ \hlambda(m,n) = \hlambda_0 \right\rbrace \,,
\end{equation}
 with $\cL = \N\times\N$ for the Neumann boundary condition, and $\cL = \Nb\times\Nb$ for the Dirichlet boundary condition.\medskip

 One can associate one or two real eigenfunctions with such a pair $(m,n)$. When $m=n$, there is only one associated eigenfunction, and it is $D$-invariant (here $D$ denotes the bisector of one side of $\cT_e$, see Figure~\ref{F-rhome-1}). When $m \not = n$, there are two associated eigenfunctions, one invariant with respect to $D$, the other one anti-invariant. As a consequence, one can explicitly describe the eigenvalues and eigenfunctions of the four eigenvalue problems $(\cT_h,\mf{nnn})$, $(\cT_h,\mf{ndn})$ (they arise from the Neumann problem for $\cT_e$), and $(\cT_h,\mf{dnd})$, $(\cT_h,\mf{ddd})$ (they arise from the Dirichlet problem for $\cT_e$).\medskip

The resulting eigenvalues are given in Table~\ref{T-rhome-u0}.

\begin{table}[!htb]
\caption{Four mixed eigenvalue problems for $\cT_h$}\label{T-rhome-u0}
\centering
\begin{tabular}[c]{|c|c|}%
\hline
Eigenvalue problem & Eigenvalues\\
\hline
$(\cT_h,\mf{nnn})$ & $\hlambda(m,n)$, for $0 \le m \le n$\\
\hline
$(\cT_h,\mf{ndn})$ & $\hlambda(m,n)$, for $0 \le m < n$\\
\hline
$(\cT_h,\mf{dnd})$ & $\hlambda(m,n)$, for $1 \le m \le n$\\
\hline
$(\cT_h,\mf{ddd})$ & $\hlambda(m,n)$, for $1 \le m < n$\\
\hline
\end{tabular}%
\end{table}

\begin{remark}\label{R-rhome-u4}
As far as we know, there are no such explicit formulas for the eigenvalues of the other mixed boundary value problems for $\cT_h$.
\end{remark}%

Tables~\ref{T-rhome-u0}--\ref{T-rhome-u4} display the first few eigenvalues, the corresponding pairs of integers, and the corresponding indexed eigenvalues for the given mixed boundary value problems for $\cT_h\,$.

\begin{table}[!htb]
\caption{First eigenvalues for $(\cT_h,\mf{nnn})$ and $(\cT_h,\mf{ndn})$}\label{T-rhome-u2}
\centering
\begin{tabular}[c]{|c|c|c|c|}%
\hline
Eigenvalue & Pairs & $(\cT_h,\mf{nnn})$ & $(\cT_h,\mf{ndn})$\\
\hline
$0$ & $(0,0)$ & $\mu_1$ & {} \\
\hline
$\frac{16\pi^2}{9}\vsp$ & $(0,1), (1,0)$ & $\mu_2$ & $\mu_1$ \\
\hline
$3 \times \frac{16\pi^2}{9}\vsp$ & $(1,1)$ & $\mu_3$ & {} \\
\hline
$4 \times \frac{16\pi^2}{9}\vsp$ & $(0,2, (2,0)$ & $\mu_4$ & $\mu_2$ \\
\hline
$7 \times \frac{16\pi^2}{9}\vsp$ & $(1,2), (2,1)$ & $\mu_5$ & $\mu_3$ \\
\hline
$9 \times \frac{16\pi^2}{9}\vsp$ & $(0,3), (3,0)$ & $\mu_6$ & $\mu_4$ \\
\hline
\end{tabular}%
\end{table}

\begin{table}[!htb]
\caption{First eigenvalues for $(\cT_h,\mf{dnd})$ and $(\cT_h,\mf{ddd})$}\label{T-rhome-u4}
\centering
\begin{tabular}[c]{|c|c|c|c|}%
\hline
Eigenvalue & Pairs & $(\cT_h,\mf{dnd})$ & $(\cT_h,\mf{ddd})$\\
\hline
$3 \times \frac{16\pi^2}{9}\vsp$ & $(1,1)$ & $\mu_1$ & {} \\
\hline
$7 \times \frac{16\pi^2}{9}\vsp$ & $(1,2), (2,1)$ & $\mu_2$ & $\mu_1$ \\
\hline
$12 \times \frac{16\pi^2}{9}\vsp$ & $(2,2)$ & $\mu_3$ & {} \\
\hline
$13 \times \frac{16\pi^2}{9}\vsp$ & $(1,3), (3,1$ & $\mu_4$ & $\mu_2$ \\
\hline
$19 \times \frac{16\pi^2}{9}\vsp$ & $(2,3), (3,2)$ & $\mu_5$ & $\mu_3$ \\
\hline
$21 \times \frac{16\pi^2}{9}\vsp$ & $(1,4), (4,1)$ & $\mu_6$ & $\mu_4$ \\
\hline
\end{tabular}%
\end{table}

\begin{remark}\label{R-rhome-u2}
For later reference, we point out that the eigenvalues which appear in Tables~\ref{T-rhome-u2}  and \ref{T-rhome-u4} are simple.
\end{remark}%

\FloatBarrier

\subsection{Rhombus with Neumann boundary condition}\label{SS-rhome-n}

In this subsection, we choose the Neumann boundary condition on  the boundary $\partial \rhe$  of the equilateral rhombus.

\subsubsection{ The first Neumann eigenvalues of $\rhe$}

\begin{proposition}\label{P-rhome-n2}
Let $\nu_i$ denote the eigenvalues of $(\rhe,\mathfrak{n})$. Then,
\begin{equation}\label{E-rhome-n2}
0 = \nu_1 < \nu_2 < \nu_3 = \nu_4 < \nu_5 \le \cdots
\end{equation}
 More precisely,
\begin{enumerate}[(i)]
  \item The second eigenvalue $\nu_2$ is simple and satisfies
  \begin{equation}\label{E-rhome-n4}
  \nu_2 = \mu_1(\cT_h,\mf{nnd})
 = \mu_1(\cT_e,\mf{nnd}) \,.
  \end{equation}
  If $u_2 \in \cE(\nu_2)$, then it is invariant under the symmetry
  $D$, anti-invariant under the symmetry $M$, and $\cZ(u_2)=M\cap \rhe$.\\
  %% hence $\psi_2 \in \cE(\nu_2) \cap \cS_{+,-}$.
  Furthermore, $u_2|\cT_h$ is a first eigenfunction of
  $(\cT_h,\mf{nnd})$, and
  %% or equivalently,
  $u_2|\cT_e$ is a first eigenfunction of $(\cT_e,\mf{nnd})$.%\\[1pt]
  \item  For the eigenspace $\cE(\nu_3)$ we have
\begin{equation}\label{E-rhome-n6}
\left\{
\begin{array}{l}
\dim \left( \cE(\nu_3) \cap  \cS_{+,+}\right) = \dim \left(
\cE(\nu_3) \cap \cS_{-,+}\right) = 1 \,,\\[5pt]
\cE(\nu_3) \cap \cS_{-,-} = \cE(\nu_3) \cap \cS_{+,-} = \{0\}\,.
\end{array}
\right.
\end{equation}
In particular, the eigenspace $\cE(\nu_3)$ is spanned by  two linearly independent functions $u_3$ and $u_4$ which are $M$ invariant, and whose restrictions to $\cT_e$ generate the eigenspace $\cE\left( \nu_2(\cT_e) \right)$.
\end{enumerate}
\end{proposition}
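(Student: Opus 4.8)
The plan is to run the reflection principle of Proposition~\ref{P-refpr-2} with $\mf{a}=\mf{n}$ and then feed in the eigenvalue inequalities of Propositions~\ref{P-use-2} and~\ref{P-use-4} together with the explicit data of Tables~\ref{T-rhome-u2}--\ref{T-rhome-u4}. Recall that side~$1$ of $\cT_h$ lies on $\partial\rhe$, side~$2$ on the long diagonal $D$, side~$3$ on the short diagonal $M$, and that $\cS_{\sigma,\tau}$ is cut out by $D^{*}=\sigma$, $M^{*}=\tau$; so by Proposition~\ref{P-refpr-2}(iii) with $\mf{a}=\mf{n}$ the summand $\cS_{+,+}$ corresponds to $(\cT_h,\mf{nnn})$, $\cS_{-,+}$ to $(\cT_h,\mf{ndn})$, $\cS_{+,-}$ to $(\cT_h,\mf{nnd})$, $\cS_{-,-}$ to $(\cT_h,\mf{ndd})$, and $\mtp(\lambda,(\rhe,\mf{n}))=\sum_{\mf{b},\mf{c}}\mtp(\lambda,(\cT_h,\mf{nbc}))$. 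Thus the whole statement reduces to reading off the low-lying spectra of these four mixed problems.

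First I would locate the first eigenvalues. Tables~\ref{T-rhome-u2}--\ref{T-rhome-u4} give $\mu_1(\cT_h,\mf{nnn})=0$ and $\mu_2(\cT_h,\mf{nnn})=\mu_1(\cT_h,\mf{ndn})=\tfrac{16\pi^2}{9}$, both simple by Remark~\ref{R-rhome-u2}, with the next $\mf{nnn}$- and $\mf{ndn}$-eigenvalues being $3\cdot\tfrac{16\pi^2}{9}$ and $4\cdot\tfrac{16\pi^2}{9}$. Proposition~\ref{P-use-4} gives $0<\mu_1(\cT_h,\mf{nnd})<\mu_1(\cT_h,\mf{ndn})=\tfrac{16\pi^2}{9}$, and Proposition~\ref{P-use-2} gives $\mu_2(\cT_h,\mf{nnd})>\mu_2(\cT_h,\mf{nnn})=\tfrac{16\pi^2}{9}$ and $\mu_1(\cT_h,\mf{ndd})>\mu_1(\cT_h,\mf{ndn})=\tfrac{16\pi^2}{9}$. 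Merging the four spectra, the $(\rhe,\mf{n})$-eigenvalues that are $\le\tfrac{16\pi^2}{9}$ are exactly $0$ (once, in $\cS_{+,+}$), $\nu_2:=\mu_1(\cT_h,\mf{nnd})$ (once, in $\cS_{+,-}$), and $\tfrac{16\pi^2}{9}$ (twice, once in $\cS_{+,+}$ and once in $\cS_{-,+}$), and nothing else is $\le\tfrac{16\pi^2}{9}$; this gives \eqref{E-rhome-n2}, with $\nu_2$ simple and $\nu_3=\nu_4=\tfrac{16\pi^2}{9}<\nu_5$.

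For assertion~(i): since $\nu_2$ is simple and carried entirely by $\cS_{+,-}$, a corresponding $\psi_2$ satisfies $D^{*}\psi_2=\psi_2$, $M^{*}\psi_2=-\psi_2$, so $\psi_2$ vanishes on $M$ and $\psi_2|\cT_h$ is a first eigenfunction of $(\cT_h,\mf{nnd})$. From $M^{*}\psi_2=-\psi_2$ and Proposition~\ref{P-refpr-2}(ii), $\psi_2|\cT_e$ is an eigenfunction of $(\cT_e,\mf{nnd})$; applying the reflection principle to the decomposition of $\cT_e$ by $D$ into two copies of $\cT_h$ gives $\spc(\cT_e,\mf{nnd})=\spc(\cT_h,\mf{nnd})\cup\spc(\cT_h,\mf{ndd})$, whence $\mu_1(\cT_e,\mf{nnd})=\mu_1(\cT_h,\mf{nnd})=\nu_2$ (using $\mu_1(\cT_h,\mf{nnd})<\mu_1(\cT_h,\mf{ndd})$ from the previous step). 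This is \eqref{E-rhome-n4} and shows $\psi_2|\cT_e$ is a first eigenfunction of $(\cT_e,\mf{nnd})$. To upgrade $M\subset\cZ(\psi_2)$ to $\cZ(\psi_2)=M$: $\psi_2|\cT_h$ is, up to sign, the ground state of the mixed problem $(\cT_h,\mf{nnd})$, hence $>0$ in the interior of $\cT_h$ and on the relative interiors of its two Neumann sides (sides $1$ and $2$), by the strong maximum principle and Hopf's lemma; since the four triangles $\cT_{h,j}$ tile $\rhe$ and meet only along $D\cup M$, $\psi_2$ is nonzero in the interior of each $\cT_{h,j}$ and along the interior of $D$, so $\cZ(\psi_2)\cap\rhe\subset M$, and therefore $\cZ(\psi_2)=M$.

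For assertion~(ii): from the merge above, $\mtp(\tfrac{16\pi^2}{9},(\cT_h,\mf{nnn}))=\mtp(\tfrac{16\pi^2}{9},(\cT_h,\mf{ndn}))=1$, while $\tfrac{16\pi^2}{9}$ is not an eigenvalue of $(\cT_h,\mf{nnd})$ (it lies strictly between $\mu_1$ and $\mu_2$) nor of $(\cT_h,\mf{ndd})$ (it is below $\mu_1$); this gives \eqref{E-rhome-n6}, and $\cE(\nu_3)$ is spanned by some $\psi_3\in\cE(\nu_3)\cap\cS_{+,+}$ and $\psi_4\in\cE(\nu_3)\cap\cS_{-,+}$, both $M$-invariant. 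By Proposition~\ref{P-refpr-2}(i) the restriction $\phi\mapsto\phi|\cT_e$ then maps $\cE(\nu_3)$ isomorphically onto $\cE(\nu_3,(\cT_e,\mf{nnn}))$; since $\tfrac{16\pi^2}{9}=\hlambda(0,1)=\hlambda(1,0)$ has multiplicity $2$ as a Neumann eigenvalue of $\cT_e$ and equals $\nu_2(\cT_e)$ (by \eqref{E-rhome-u2}--\eqref{E-rhome-u2a}), the restrictions $\psi_3|\cT_e$ and $\psi_4|\cT_e$ span $\cE(\nu_2(\cT_e))$.

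The proof has no hard analytic core: it is essentially bookkeeping on top of Propositions~\ref{P-refpr-2}, \ref{P-use-2}, \ref{P-use-4} and the tables. The two points that genuinely need care are: first, correctly matching the symmetry types $\cS_{\sigma,\tau}$ with the triples $\mf{nbc}$ and noticing that the two $M$-even eigenfunctions of $\rhe$ at $\tfrac{16\pi^2}{9}$ have \emph{opposite} $D$-parities — this is what forces $\cE(\nu_3)$ to split as in \eqref{E-rhome-n6} rather than to sit inside a single symmetry summand, and it is the translation of the fact that the multiplicity-$2$ eigenvalue $\hlambda(0,1)$ of $\cT_e$ splits into two simple eigenvalues of distinct mixed problems on $\cT_h$; second, passing from $M\subset\cZ(\psi_2)$ to the equality $\cZ(\psi_2)=M$, which is exactly where one uses that the ground state of $(\cT_h,\mf{nnd})$ does not vanish on its Neumann edges (Hopf's lemma). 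This last point is the only non-combinatorial input.
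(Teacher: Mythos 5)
Your proof is correct and follows essentially the same route as the paper: the reflection principle reduces everything to the four mixed problems $(\cT_h,\mf{nbc})$, the explicit spectra of $(\cT_h,\mf{nnn})$ and $(\cT_h,\mf{ndn})$ are read off the tables, and the Lotoreichik--Rohleder and Siudeja inequalities place $\mu_1(\cT_h,\mf{nnd})$ and $\mu_1(\cT_h,\mf{ndd})$ so that the merged list gives \eqref{E-rhome-n2} and the symmetry-type dimensions. You even make explicit two small points the paper leaves implicit (the identification $\mu_1(\cT_e,\mf{nnd})=\mu_1(\cT_h,\mf{nnd})$ via a second reflection across $D$, and the Hopf-lemma argument for $\cZ(\psi_2)=M$), which is welcome.
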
%

\proof According to the Reflection principle, Proposition~\ref{P-refpr-2}, the first six eigenvalues of $(\rhe,\mf{n})$ belong to the set
\begin{equation}\label{E-rhome-n7c2}
\left\lbrace  \mu_i(\cT_h,\mf{nab}) \text{~for~} 1 \le i \le 6 \text{~and~} \mf{a,b} \in \{\mf{d,n}\}\right\rbrace \,.
\end{equation}

Among these numbers, the eigenvalues of $(\cT_h,\mf{nnn})$ and $(\cT_h,\mf{ndn})$ are known explicitly, and they are simple, see Table~\ref{T-rhome-u2}.\medskip

Although the eigenvalues and eigenfunctions of $(\cT_h,\mf{nnd})$ and $(\cT_h,\mf{dnn})$ are, as far as we know, not explicitly known, they satisfy some inequalities: the obvious inequalities $\mu_1 < \mu_2 \le \cdots$, and the inequalities provided by Proposition~\ref{P-use-2} (see \cite{LoRo2017}), and Proposition~\ref{P-use-4} (see \cite{Siu2016}).
\medskip

Table~\ref{T-rhome-n7c10} summarizes what we know about the four first eigenvalues of the problems $(\cT_h,\mf{nab})$, for $\mf{a,b} \in \{\mf{d,n}\}$.

%%
%$$
%\left\lbrace \mu_i(\cT_h,\mf{nab}) \right\rbrace_{1 \le i \le 4, \, \mf{a,b} \in \{\mf{d,n}\}}\,.
%$$
%%
In blue the known values, in red the known inequalities (Propositions~\ref{P-use-2} and \ref{P-use-4}).  The gray cells contain the eigenvalues, listed with multiplicities, for which we have no a priori information, except the trivial inequalities (black inequality signs).

\begin{remark}
Note that we only display the first four eigenvalues in each line, because this turns out to be sufficient for our purposes.
\end{remark}

\begin{remark}
The reason why there are white empty cells in the 5th row is explained in Remark~\ref{R-use-P2}.
\end{remark}

\begin{table}[!htb]
\caption{$\rhe$, Neumann boundary condition}\label{T-rhome-n7c10}
\centering
\begin{tabular}[c]{|c|c|c|c|c|c|c|c|c|}%
\hline
$(\sigma,\tau)$ & $(\cT_h,\mf{nab})$ & $\mu_1$ & {} & $\mu_2$ & {} & $\mu_3$ & {} & $\mu_4$ \\
\hline
$(+,+)$ & $(\cT_h,\mf{nnn})$ &\cfb $0$ & $<$ & \cfb $\frac{16\pi^2}{9}$ & $\cfb <$ & $\cfb 3\,\frac{16\pi^2}{9}$ & $\cfb <$ & $\cfb 4\,\frac{16\pi^2}{9}\vsp$\\
\hline
{Prop.~\ref{P-use-2}} & {} & \rotatebox{90}{$\cfr >$} & {} & \rotatebox{90}{$\cfr >$} & {} & \rotatebox{90}{$\cfr >$} & {} & \rotatebox{90}{$\cfr >$}\\
\hline
$(+,-)$ & $(\cT_h,\mf{nnd})$ & {\cllg} & $<$ & {\cllg} & $\le$ & {\cllg} & $\le$ & {\cllg}\\
\hline
{Prop.~\ref{P-use-4}} & {} & \rotatebox{90}{$\cfr >$} & {} & {} & {} & {} & {} & {}\\
\hline
$(-,+)$ & $(\cT_h,\mf{ndn})$ & \cfb $\frac{16\pi^2}{9}$ & $<$ & \cfb $4\,\frac{16\pi^2}{9}$ & $\cfb <$ & \cfb $7\, \frac{16\pi^2}{9}$ & $\cfb <$ & \cfb $9\, \frac{16\pi^2}{9}\vsp$\\
\hline
{Prop.~\ref{P-use-2}} & {} & \rotatebox{90}{$\cfr >$} & {} & \rotatebox{90}{$\cfr >$} & {} & \rotatebox{90}{$\cfr >$} & {} & \rotatebox{90}{$\cfr >$}\\
\hline
$(-,-)$ & $(\cT_h,\mf{ndd})$ & {\cllg} & $<$ & {\cllg} & $\le$ & {\cllg} & $\le$ & {\cllg}\\
\hline
\end{tabular}%
\end{table}%

\noib~ We know that $\nu_1 = 0$, and that this eigenvalue is simple.

\noib~ From Table~\ref{T-rhome-n7c10}, we deduce that
\begin{equation*}
\nu_2 \in \lbrace  \mu_2(\cT_h,\mf{nnn}),\mu_1(\cT_h,\mf{nnd}) \rbrace \,,
\end{equation*}
with no other possibility. On the other hand, $\mu_1(\cT_h,\mf{nnd}) < \mu_1(\cT_h,\mf{ndn})$ $= \mu_2(\cT_h,\mf{nnn})$. It follows that $\nu_2 = \mu_1(\cT_h,\mf{nnd})$, and that this eigenvalue is simple, $\nu_2 < \nu_3$.

\noib~ From Table~\ref{T-rhome-n7c10} and the knowledge of $\nu_1$ and $\nu_2$, we deduce that
\begin{equation*}
\nu_3 \in \lbrace \mu_2(\cT_h,\mf{nnn}), \mu_1(\cT_h,\mf{ndn})  \rbrace \,,
\end{equation*}
with no other possibility. Since $\mu_2(\cT_h,\mf{nnn})= \mu_1(\cT_h,\mf{ndn}) $, we have $\nu_3 = \nu_4 < \nu_5$. The proposition follows. \hfill \qed \medskip

%Upon inspection of Table~\ref{T-rhome-n7c10}, we conclude that the Neumann eigenvalues of $\rhe$ satisfy the following inequalities
%%
%\begin{equation}\label{E-rhome-n7c14}
%0 = \nu_1 < \nu_2 < \nu_3 = \nu_4 < \nu_5 \le \nu_6 \le \cdots \,,
%\end{equation}
%with
%$$
%\nu_2= \mu_1 (\cT_h,\mf{nnd})\,,\, \nu_3= \mu_1(\cT_h,\mf{ndn})= \mu_2 (\cT_h,\mf{nnn})\,.
%$$
%%
%We can a priori not draw any conclusion on $\nu_5, \nu_6, \ldots$.\medskip
%
%Table~\ref{T-rhome-n7c10} actually provides further information,
%%
%\begin{equation}\label{E-rhome-n7c16}
%\left\{
%\begin{array}{l}
%\dim \cE(\nu_1) \cap \cS_{+,+} = 1 \,,\\[5pt]
%\cE(\nu_1) \cap \cS_{\sigma,\tau} = \{0\}\, \text{~if~} (\sigma,\tau) \not = (+,+)\,,\\[5pt]
%\dim \cE(\nu_2) \cap \cS_{+,-} = 1 \,,\\[5pt]
%\cE(\nu_2) \cap \cS_{\sigma,\tau} = \{0\}\, \text{~if~} (\sigma,\tau) \not = (+,-) \,,\\[5pt]
%\dim \cE(\nu_3) \cap \cS_{+,+} = \dim \cE(\nu_3) \cap \cS_{+,-} = 1\,,\\[5pt]
%\dim \cE(\nu_3) \cap \cS_{\sigma,\tau} = \{0\} \text{~if~} (\sigma,\tau) = (-,+) \text{~or~} (-,-)\,.
%\end{array}
%\right.
%\end{equation}
%\medskip
%
%The equality $\nu_3 = \nu_4$ comes from the fact that the second Neumann eigenvalue of the equilateral triangle has multiplicity $2$, with an eigen\-space generated by one eigenfunction which is symmetric with respect to a side bisector, and another one which is anti-symmetric with respect to the same bisector. The proof of Proposition~\ref{P-rhome-n2} is complete.

Note: For the reader's information, Table~\ref{T-rhome-n7c12}, displays
numerical values for the eigenvalues: in the gray cells, the numerical values computed with \textsc{matlab}; in the other cells, the approximate values of the
known eigenvalues.

\begin{table}[!htb]
\caption{$\rhe$, Neumann boundary condition}\label{T-rhome-n7c12}
\centering
\begin{tabular}[c]{|c|c|c|c|c|c|c|c|c|}%
\hline
$(\sigma,\tau)$ & $(\cT_h,\mf{nab})$ & $\mu_1$ & {} & $\mu_2$ & {} & $\mu_3$ & {} & $\mu_4$ \\
\hline
$(+,+)$ & $(\cT_h,\mf{nnn})$ &\cfb $0$ & $<$ & \cfb $17.55$ & $\cfb <$ & $\cfb 52.64$ & $\cfb <$ & $\cfb 70.18$\\
\hline
{} & {} & \rotatebox{90}{$\cfr >$} & {} & \rotatebox{90}{$\cfr >$} & {} & \rotatebox{90}{$\cfr >$} & {} & \rotatebox{90}{$\cfr >$}\\
\hline
$(+,-)$ & $(\cT_h,\mf{nnd})$ & {\cllg $7.16$} & $<$ & {\cllg $37.49$} & $\le$ & {\cllg $90.06$} & $\le$ & {\cllg $120.87$}\\
\hline
{} & {} & \rotatebox{90}{$\cfr >$} & {} & {} & {} & {} & {} & {}\\
\hline
$(-,+)$ & $(\cT_h,\mf{ndn})$ & \cfb $17.55$ & $<$ & \cfb $70.18$ & $\cfb <$ & \cfb $122.82$ & $\cfb <$ & \cfb $157.91$\\
\hline
{} & {} & \rotatebox{90}{$\cfr >$} & {} & \rotatebox{90}{$\cfr >$} & {} & \rotatebox{90}{$\cfr >$} & {} & \rotatebox{90}{$\cfr >$}\\
\hline
$(-,-)$ & $(\cT_h,\mf{ndd})$ & {\cllg $47.63$} & $<$ & {\cllg $110.36$} & $\le$ & {\cllg $189.52$} & $\le$ & {\cllg $224.68$}\\
\hline
\end{tabular}%
\end{table}%
\FloatBarrier

\begin{remark}\label{R-rhome-n2R}
One can also deduce Proposition~\ref{P-rhome-n2} from the proof of Corollary~1.3 in \cite{Siu2016} which establishes that the first four Neumann eigenvalues of a rhombus ${\cR}h(\alpha)$ with smallest angle $2\alpha > \frac{\pi}{3}$ are simple, and describes the nodal patterns of the corresponding eigenvalues. When $2\alpha = \frac{\pi}{3}$ the eigenvalues $\nu_3$ and $\nu_4$ become equal, see also Remarks~4.1 and 4.2 in \cite{Siu2016}.
\end{remark}%

\subsection{$\ecp(\rhe,\mf{n})$ is false}\label{SS-rhe-ecp-n}

As a corollary of Pro\-position~\ref{P-rhome-n2}, we obtain,

\begin{proposition}\label{P-rhome-n4}
The Extended Courant property is false for the equilateral rhombus with Neumann boundary condition. More precisely, there exists a linear combination of eigenfunctions in $\cE(\nu_1) \bigoplus \cE(\nu_3)$ with four nodal domains.
\end{proposition}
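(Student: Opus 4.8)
The plan is to exhibit an explicit linear combination $v = a\,\psi_1 + b\,\psi_3 + c\,\psi_4$, where $\psi_1$ spans $\cE(\nu_1)$ (the constants) and $\psi_3,\psi_4$ span $\cE(\nu_3)$, whose nodal set cuts $\rhe$ into four pieces. The starting point is the structural information from Proposition~\ref{P-rhome-n2}(ii): both $\psi_3$ and $\psi_4$ are $M$-invariant, and their restrictions to $\cT_e$ generate $\cE(\nu_2(\cT_e))$, the second Neumann eigenspace of the equilateral triangle. So the whole problem descends, via the reflection principle (Proposition~\ref{P-refpr-2}(i)), to understanding nodal sets of $\text{const} + (\text{second Neumann eigenfunction})$ on the equilateral triangle $\cT_e$, and then doubling across the diagonal $M$.

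First I would recall the explicit form of the second Neumann eigenspace of $\cT_e$. By \eqref{E-rhome-u2} with $(m,n)=(0,1)$ (or $(1,0)$), $\nu_2(\cT_e) = \hlambda(0,1) = \frac{16\pi^2}{9}$, with multiplicity two; the two eigenfunctions can be taken to be $D$-symmetric and $D$-antisymmetric respectively, and in suitable coordinates they are the familiar trigonometric eigenfunctions of the equilateral triangle. I would write down a one-parameter family $\phi_t = \cos\theta\, e_{sym} + \sin\theta\, e_{anti}$ of such eigenfunctions and inspect the nodal set of $1 + s\,\phi_t$ on $\cT_e$ for a well-chosen direction $t$ and amplitude $s$. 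The key point I want to arrange is that for an appropriate choice the level set $\{1 + s\,\phi_t = 0\}$ inside the open triangle $\cT_e$ consists of a single arc that meets $\partial\cT_e$ in its two endpoints and divides $\cT_e$ into exactly two nodal domains, \emph{and} that this arc is transverse to the diagonal $M$ (equivalently, the $M$-symmetry behaviour is such that after doubling the arc in $\cT_{e,1}$ glues to its mirror image in $\cT_{e,2}$ to form two arcs, not one). Then the doubled function $v$ on $\rhe$ has a nodal set consisting of two arcs, producing $2+2=4$ nodal domains. Alternatively, and perhaps more cleanly, one arranges the nodal arc in $\cT_e$ to cross $M$ at two points, so that after reflection one gets a single closed nodal curve plus the structure needed for four components — the precise bookkeeping of components is exactly the routine calculation I would defer. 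A picture (as the paper's style suggests, with an included figure) would make the count transparent.

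The main obstacle is the second half of the previous paragraph: verifying that the explicit level set really has the claimed topology — one arc (or closed curve) cleanly separating the triangle, with the correct intersection pattern with $M$ — rather than some degenerate or more complicated configuration. This is a concrete but slightly delicate real-analytic computation with the explicit trigonometric eigenfunctions on $\cT_e$; one must choose the parameters $s,t$ so that $1 + s\phi_t$ has no critical zeros (so the nodal set is a smooth $1$-manifold), check where it meets $\partial\cT_e$ using the Neumann condition, and rule out extra components by a degree/winding or Morse-theoretic argument on the triangle. Since $\nu_1$ and $\nu_3$ are the eigenvalues in play, and $\kappa(\nu_3) = 3$ (as $\nu_1 < \nu_2 < \nu_3 = \nu_4$, so $\nu_3 = \mu_3$), having $\beta_0(v) = 4 > 3 = \kappa(\nu_3)$ contradicts \eqref{E-iecp-4}, which is precisely the desired counterexample to the $\ecp(\rhe,\mf{n})$. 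The last step is just to state this inequality and conclude.
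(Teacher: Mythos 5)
Your skeleton matches the paper's: reduce via Proposition~\ref{P-rhome-n2}\,(ii) and the reflection principle to the second Neumann eigenspace of $\cT_e$, add a constant, and count nodal domains; and your closing inequality $\beta_0(v)=4>3=\kappa(\nu_3)$ is exactly the right conclusion. But the step you defer as ``routine bookkeeping'' --- verifying that the level set of $\mathrm{const}+\phi$ really separates $\rhe$ into four components --- is the entire content of the proof, and the generic-parameter route you sketch (pick $s,t$ so the nodal set is a regular level set, then run a degree or Morse-theoretic argument) is neither carried out nor obviously workable: nothing you wrote excludes the level curve being a single arc or closed curve yielding only two or three components, and the transversality-with-$M$ discussion does not by itself settle the count.

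The paper sidesteps all of this with one specific algebraic choice. Taking the $D$-symmetric second Neumann eigenfunction of $\cT_e$ normalized as in \eqref{E-rhome-n20}, namely $\phi_2^{\mf{n}}(x,y)=2\cos\bigl(\tfrac{2\pi x}{3}\bigr)\bigl(\cos\bigl(\tfrac{2\pi x}{3}\bigr)+\cos\bigl(\tfrac{2\pi y}{\sqrt{3}}\bigr)\bigr)-1$, extending it across $M$, and adding the constant $1$ makes the combination factor as a product of cosines (via $\cos A+\cos B=2\cos\tfrac{A+B}{2}\cos\tfrac{A-B}{2}$), so its nodal set inside $\rhe$ is explicitly the union of the two straight segments $\{x=\tfrac34\}\cap\rhe$ and $\{x+\sqrt{3}\,y=\tfrac32\}\cap\rhe$, which cross at the center of the rhombus and manifestly cut it into four pieces. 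No transversality, Morse theory, or component-counting argument is needed. To complete your proof you must either reproduce this explicit factorization (or an equivalent explicit computation) or actually supply the topological argument you postponed; as written, the four-domain claim is asserted rather than proved.
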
%

\proof Proposition~\ref{P-rhome-n2}, Assertion~(ii) tells us that $\cE(\nu_3)$ contains an eigenfunction which arises from a second $D$-invariant Neumann eigenfunction of $\cT_{e,1}=\cT_e$.  It suffices to apply the arguments of \cite[Section~3.1]{BH2018-ecp1}, where we prove that $\ecp(\cT_0,\mf{n})$ is false. Here, $\cT_0$ is the equilateral triangle with vertices $(0,0), (1,0)$ and $(\frac{1}{2},\frac{\sqrt{3}}{2})$. A second $D$-invariant Neumann
eigenfunction for $\cT_0$ is given by
\begin{equation}\label{E-rhome-n20}
\phi(x,y)= 2 \cos\left( \frac{2\pi x}{3} \right) \left( \cos\left( \frac{2\pi x}{3} \right) + \cos\left( \frac{2\pi y}{\sqrt{3}} \right) \right)-1.
\end{equation}
The linear combination $\phi + 1$ vanishes on the line segments $\{x = \frac{3}{4}\} \cap \cT_0$ and $\{x + \sqrt{3}\, y = \frac{3}{2}\} \cap \cT_0$.\medskip

Transplant the function $\phi$ to $\cT_{e,1}$ by rotation and, using the symmetry with respect to $M$, extend it to an $M$-invariant eigenfunction $u_3$  for $(\rhe,\mf{n})$. The linear combination $u_3 + 1$ vanishes on two line segments which divide $\rhe$ into four nodal domains, see Figure~\ref{F-rhome-n}. The proposition is proved. \hfill \qed

\vspace{-5mm}
\begin{figure}[!htb]
  \centering
  \includegraphics[scale=0.30]{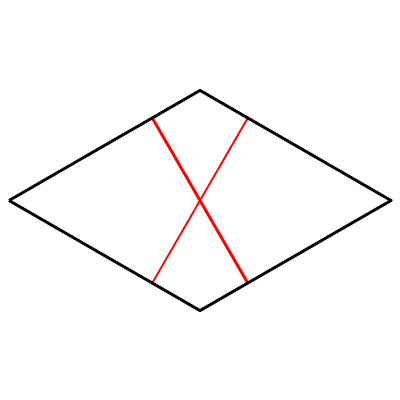}\vspace{-8mm}
  \caption{ Nodal pattern of $u_3 + 1$,  four nodal domains}\label{F-rhome-n}
\end{figure}

%\vspace{5mm}
Figure~\ref{F-rhome-n2} illustrates the variation of the number of nodal domains (the eigenfunction produced by \textsc{matlab} is proportional to $u_3$, not equal, so that the bifurcation value is not $1$ as in the proof of Proposition~\ref{P-rhome-n4}).

\begin{figure}[!htb]
  \centering
  \includegraphics[scale=0.15]{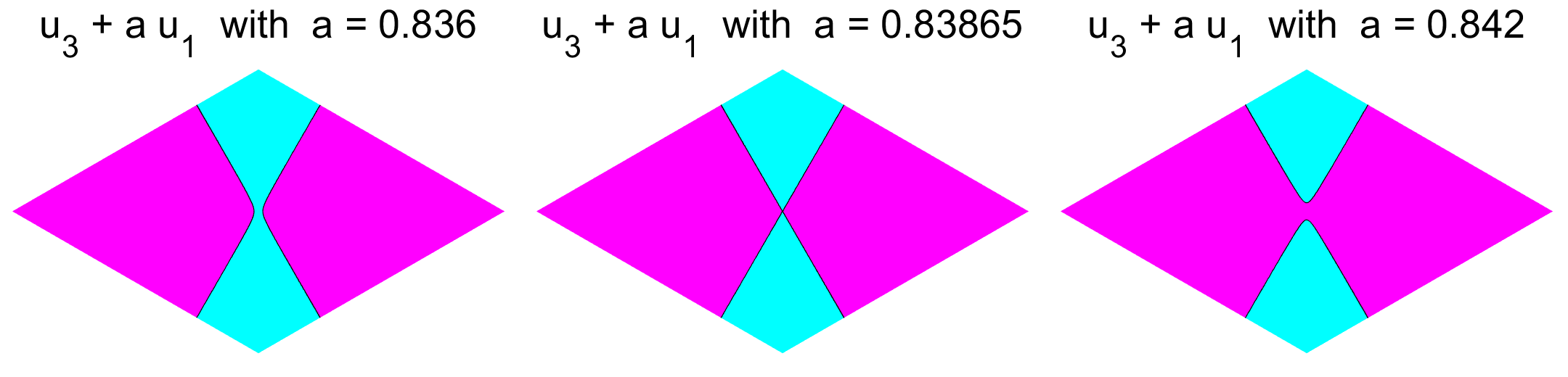}%\vspace{-8mm}
  \caption{ $(\rhe,\mf{n})$: $\ecp$ false in $\cE(\nu_1) \oplus \cE(\nu_3)$}\label{F-rhome-n2}
\end{figure}

\FloatBarrier

\subsection{Numerical results for the $\ecp(\rhe,\mf{n})$}\label{SS-rhe-n}

 In Subsection~\ref{SS-rhome-n}, we have identified the first four eigenvalues of $(\rhe,\mf{n})$, in particular $\nu_2 = \mu_1(\cT_h,\mf{nnd})$. The numerical computations in Table~\ref{T-rhome-n7c12} indicate that the next eigenvalues are $\nu_5 = \mu_2(\cT_h,\mf{nnd}) = \mu_1(\cT_h,\mf{ndd}) = \nu_6$, so that the Neumann eigenvalues of the equilateral rhombus satisfy,
\begin{equation}\label{E-rhome-n30}
 0 = \nu_1 < \nu_2 < \nu_3 = \nu_4 < \nu_5 < \nu_6 < \, \ldots \,,
\end{equation}
with corresponding nodal patterns shown in Figure~\ref{F-rhome-neu}. Looking at linear combinations of the form $u_5+au_2$, see Figure~\ref{F-rhome-neu2}, we obtain the following numerical result.

\begin{state}\label{S-rhe-ecp2}
Numerical computations of the eigenvalues and of the eigenfunctions indicate that the $\ecp(\rhe,\mf{n})$ is false in $\cE(\nu_2) \oplus \cE(\nu_5)$. More precisely, there exist linear combinations  with six nodal domains.
\end{state}%

\begin{figure}[!htb]
  \centering
  \includegraphics[scale=0.2]{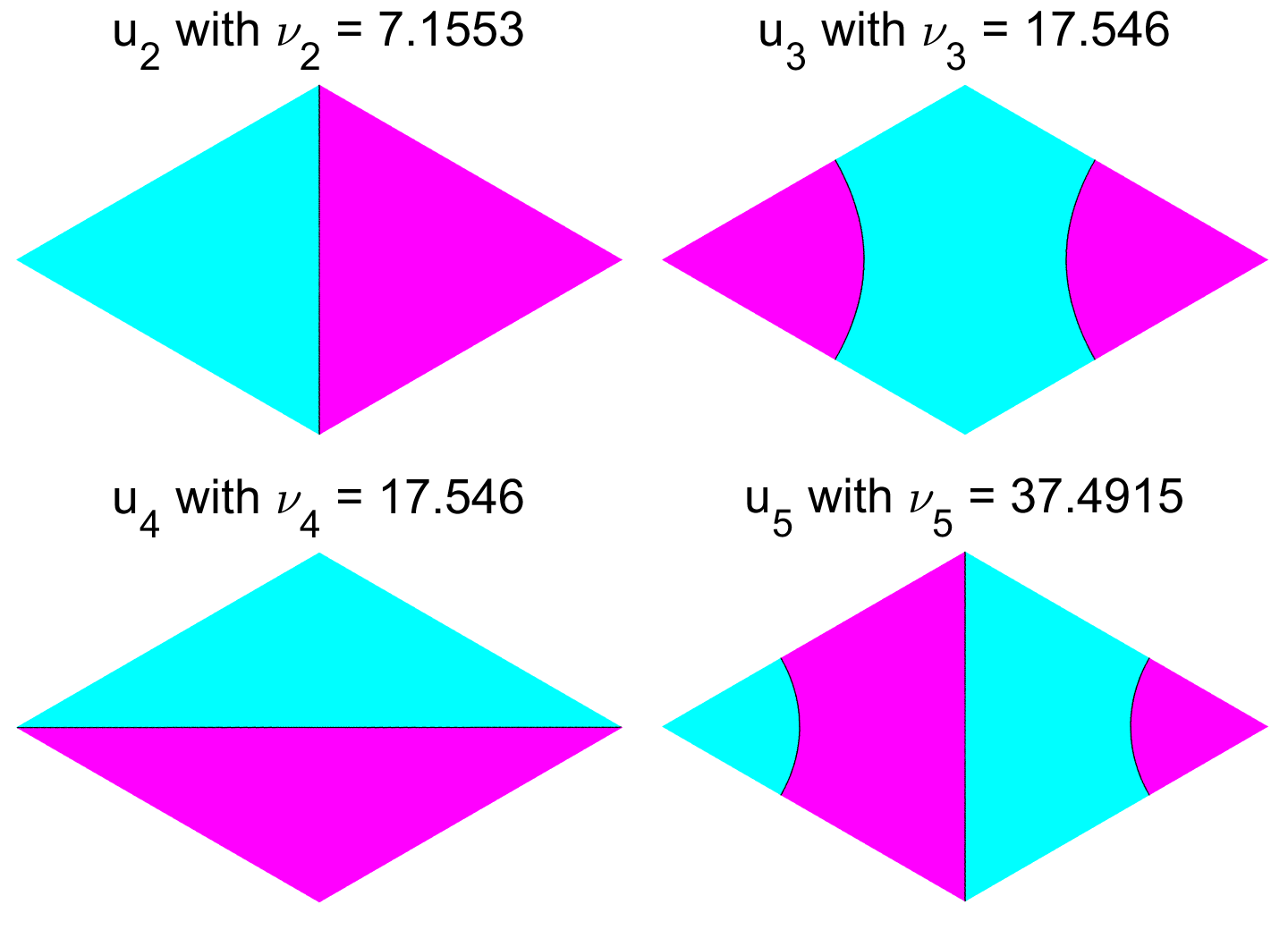}%\vspace{-2mm}
  \caption{$(\rhe,\mf{n})$: nodal patterns $u_2$ -- $u_5$}\label{F-rhome-neu}
\end{figure}
%% rhome-neu-190210.png

\begin{figure}[!htb] %%rhome-neu-ecp-25.png
  \centering
  \includegraphics[scale=0.15]{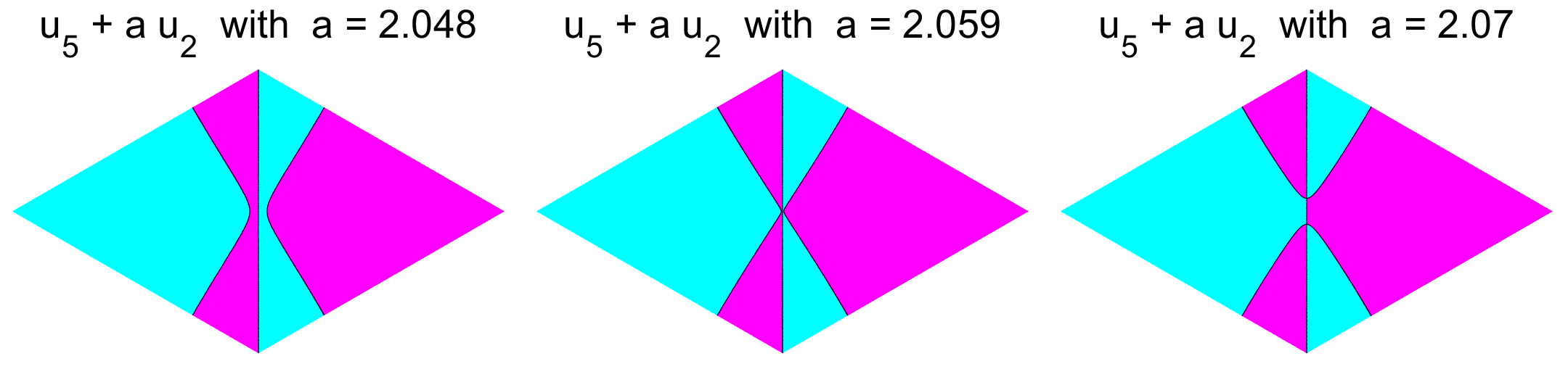}\vspace{-5mm}
  \caption{ $(\rhe,\mf{n})$: $\ecp$ false in $\cE(\nu_2) \oplus \cE(\nu_5)$}\label{F-rhome-neu2}
\end{figure}

\FloatBarrier

\begin{remark}\label{R-rhe-ecp2s}
This counterexample can also be interpreted as a counterexample to the $\ecp$ for the equilateral triangle with mixed boundary conditions, Neumann on two sides, and Dirichlet on the third side. We first look at nodal patterns in $\cE\left( \mu_1(\cT_h,\mf{nnd})\right) \oplus \cE\left( \mu_2(\cT_h,\mf{nnd}) \right)$, see Figure~\ref{F-Th-nnd}.  The corresponding nodal patterns in $\cE\left( \mu_1(\cT_e,\mf{nnd})\right) \oplus \cE\left( \mu_2(\cT_e,\mf{nnd}) \right)$ are obtained using the symmetry with respect to the ho\-ri\-zontal side.
\end{remark}%

\begin{figure}[!htb] %%rhome-neu-ecp-25.png
  \centering
  \includegraphics[scale=0.2]{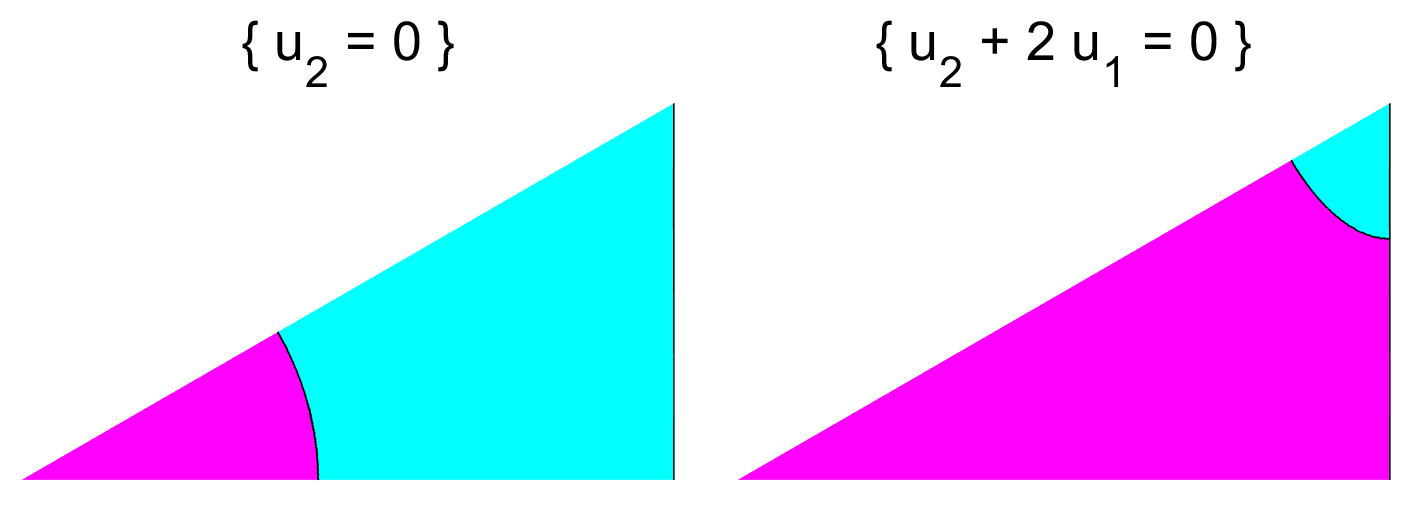}\vspace{-5mm}
  \caption{ $(\cT_h,\mf{nnd})$: nodal patterns in $\cE(\mu_1) \oplus \cE(\mu_2)$}\label{F-Th-nnd}
\end{figure}

\begin{remark}\label{R-rhe-ecp2t}
We refer to Section~\ref{S-final} for comments on our numerical approach.
\end{remark}%
\FloatBarrier

%% Suppressed section, copied after \end{document}

\subsection{Numerical results for the $\ecp(\rhe,\mf{d})$}\label{SS-rhe-ecp-d}%

Table~\ref{T-rhome-d7c10a} is the analogue of Table~\ref{T-rhome-n7c10} for the Dirichlet problem in $\rhe$. Although one can identify the first two Dirichlet eigenvalues of $\rhe$ as $\delta_1(\rhe) = \mu_1(\cT_h,\mf{dnn})$ and $\delta_2(\rhe) = \mu_1(\cT_h,\mf{dnd})$, it is not possible to rigorously identify the following eigenvalues. We have to rely on numerical computations.

\begin{table}[!htb]
\caption{$\rhe$, Dirichlet boundary condition}\label{T-rhome-d7c10a}
\centering
\begin{tabular}[c]{|c|c|c|c|c|c|c|c|c|}%
\hline
$(\sigma,\tau)$ & $(\cT_h,\mf{dab})$ & $\mu_1$ & {} & $\mu_2$ & {} & $\mu_3$ & {} & $\mu_4$ \\
\hline
$(+,+)$ & $(\cT_h,\mf{dnn})$ & {\cllg} & $<$ & {\cllg} & $\le$ & {\cllg} & $\le$ &{\cllg}\\
\hline
{Prop.~\ref{P-use-2}} & {} & \rotatebox{90}{$\cfr >$} & {} & \rotatebox{90}{$\cfr >$} & {} & \rotatebox{90}{$\cfr >$} & {} & \rotatebox{90}{$\cfr >$}\\
\hline
$(+,-)$ & $(\cT_h,\mf{dnd})$ & \cfb $3\,\frac{16\pi^2}{9}$ & $<$ & \cfb $7\,\frac{16\pi^2}{9}$ & $\cfb <$ & \cfb $12\, \frac{16\pi^2}{9}$ & $\cfb <$ & \cfb $13\, \frac{16\pi^2}{9}\vsp$\\
\hline
{Prop.~\ref{P-use-4}} & {} & \rotatebox{90}{$\cfr >$} & {} & {} & {} & {} & {} & {}\\
\hline
$(-,+)$ & $(\cT_h,\mf{ddn})$ & {\cllg} & $<$ & {\cllg} & $\le$ & {\cllg} & $\le$ & {\cllg}\\
\hline
{Prop.~\ref{P-use-2}} & {} & \rotatebox{90}{$\cfr >$} & {} & \rotatebox{90}{$\cfr >$} & {} & \rotatebox{90}{$\cfr >$} & {} & \rotatebox{90}{$\cfr >$}\\
\hline
$(-,-)$ & $(\cT_h,\mf{ddd})$ & \cfb $7\, \frac{16\pi^2}{9}$ & $<$ & \cfb $13\, \frac{16\pi^2}{9}$ & $\cfb <$ & \cfb $19\, \frac{16\pi^2}{9}$ & $\cfb <$ & \cfb $21\, \frac{16\pi^2}{9}\vsp$\\
\hline
\end{tabular}%
\end{table}%

Table~\ref{T-rhome-d7c12} provides the numerical eigenvalues computed with \textsc{matlab}, and numerical approximations of the explicitly known eigenvalues.

\begin{table}[!htb]
\caption{$\rhe$, Dirichlet boundary condition}\label{T-rhome-d7c12}
\centering
\begin{tabular}[c]{|c|c|c|c|c|c|c|c|c|}%
\hline
$(\sigma,\tau)$ & $(\cT_h,\mf{dab})$ & $\mu_1$ & {} & $\mu_2$ & {} & $\mu_3$ & {} & $\mu_4$ \\
\hline
$(+,+)$ & $(\cT_h,\mf{dnn})$ & {\cllg $24.90$} & $<$ & {\cllg $83.83$} & $\le$ & {\cllg $140.50$} & {$\le$} & {\cllg $169.20$}\\
\hline
{} & {} & \rotatebox{90}{$\cfr >$} & {} & \rotatebox{90}{$\cfr >$} & {} & \rotatebox{90}{$\cfr >$} & {} & \rotatebox{90}{$\cfr >$}\\
\hline
$(+,-)$ & $(\cT_h,\mf{dnd})$ & \cfb $52.64$ & $<$ & \cfb $122.82$ & $\cfb <$ & \cfb $210.55$ & $\cfb <$ & \cfb $228.10$\\
\hline
{} & {} & \rotatebox{90}{$\cfr >$} & {} & {} & {} & {} & {} & {}\\
\hline
$(-,+)$ & $(\cT_h,\mf{ddn})$ & {\cllg $71.71$} & $<$ & {\cllg $169.80$} & $\le$ & {\cllg $234.10$} & $\le$ & {\cllg $292.70$}\\
\hline
{} & {} & \rotatebox{90}{$\cfr >$} & {} & \rotatebox{90}{$\cfr >$} & {} & \rotatebox{90}{$\cfr >$} & {} & \rotatebox{90}{$\cfr >$}\\
\hline
$(-,-)$ & $(\cT_h,\mf{ddd})$ & \cfb $122.82$ & $<$ & \cfb $228.10$ & $\cfb <$ & \cfb $333.37$ & $\cfb <$ & \cfb $368.47$\\
\hline
\end{tabular}%
\end{table}%

From Table~\ref{T-rhome-d7c12}, we deduce that the Dirichlet eigenvalues of $\rhe$ satisfy
\begin{equation}\label{E-rhome-d7c18}
0 < \delta_1 < \delta_2 < \delta_3 < \delta_4 < \delta_5 = \delta_6 < \delta_7 \cdots \,.
\end{equation}
More precisely, we find that $\delta_2(\rhe) = \mu_1(\cT_h,\mf{dnd}) =\delta_1(\cT_e)$ (the first Dirichlet eigenvalue of the equilateral triangle $\cT_e$). An eigenfunction $u_2$ associated with $\delta_2(\rhe)$ arises from a first Dirichlet eigenfunction of $\cT_e$. We also find that $\delta_5(\rhe) = \mu_2(\cT_h,\mf{dnd}) = \mu_1(\cT_h,\mf{ddd}) = \delta_2(\cT_e)$. Eigenfunctions associated with $\delta_5(\rhe)$ arise from second Dirichlet eigenfunctions of $\cT_e$, one of them $u_5$ is invariant with respect to $D$, the other is anti-invariant. The nodal patterns of $u_2$ and $u_5$ are given in Figure~\ref{F-rhome-dir} (first and last pictures).\medskip

\begin{figure}[!htb]
  \centering
  \includegraphics[scale=0.2]{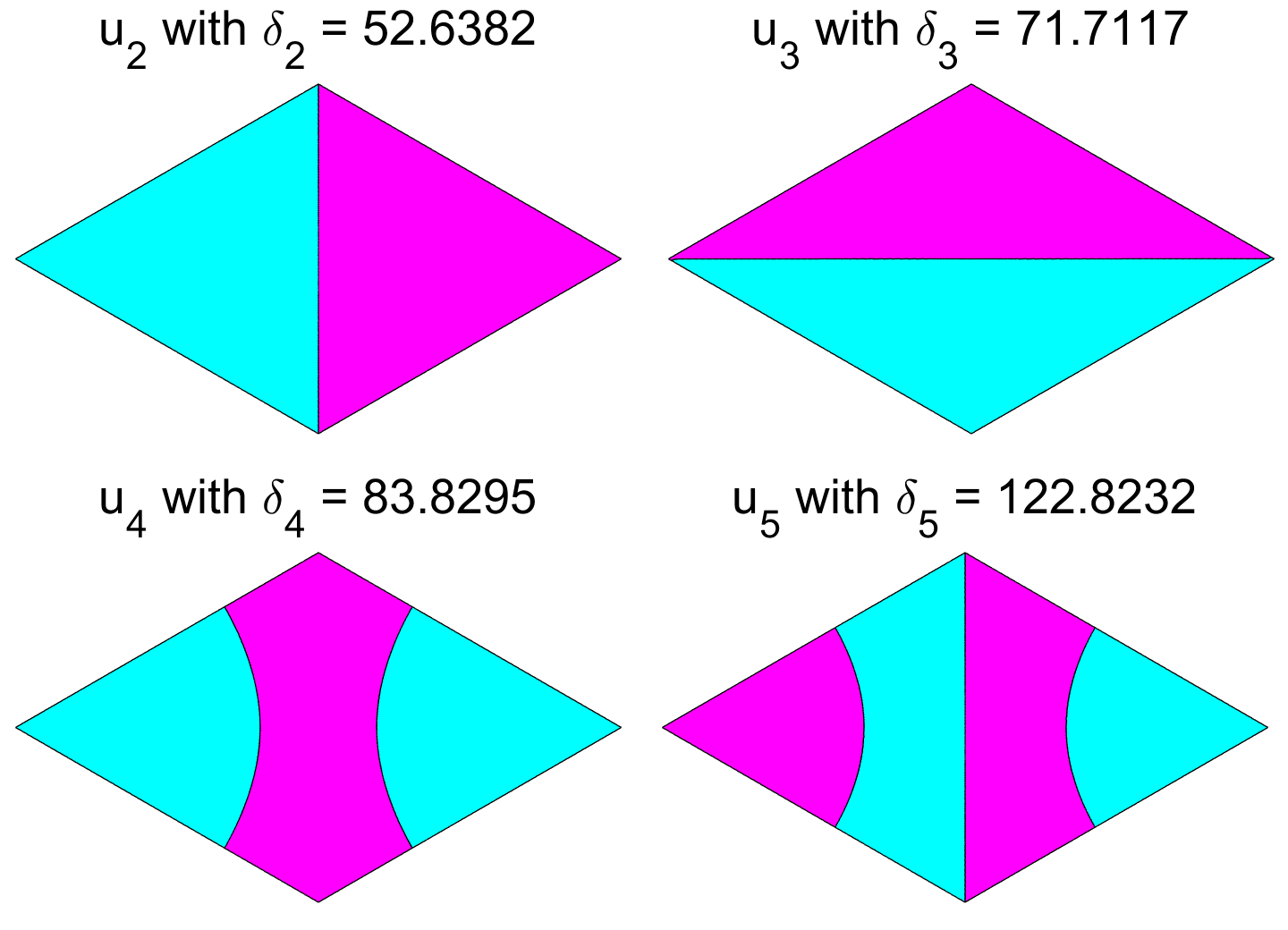}\vspace{-5mm}
  \caption{$(\rhe,\mf{d})$: nodal patterns $u_2$ -- $u_5$}\label{F-rhome-dir}
\end{figure}
%% rhome-dir-190210.png

In \cite[Section~3]{BH2018-ecp1}, we proved that $\ecp(\cT_e,\mf{d})$ is false: there exists a linear combination of a first eigenfunction and a second $D$-invariant eigenfunction of $(\cT_e,\mf{d})$, with three nodal domains. The same example transcribed to $(\rhe,\mf{d})$ yields a linear combination in $\cE(\delta_2) \oplus \cE(\delta_5)$ with $6$ nodal domains: for the Dirichlet problem in $\rhe$, we have the following (numerical) analogue of Proposition~\ref{P-rhome-n4}, see Figure~\ref{F-rhome-dir2}.

\begin{state}\label{P-rhome-d4n}
The numerical approximations of the eigenvalues $\delta_j(\rhe)$ deduced from Table~\ref{T-rhome-d7c12} indicate that the $\ecp(\rhe,\mf{d})$ is false in $\cE(\delta_2) \oplus \cE(\delta_5)$.
\end{state}

\begin{figure}[!htb]
  \centering
  \includegraphics[scale=0.15]{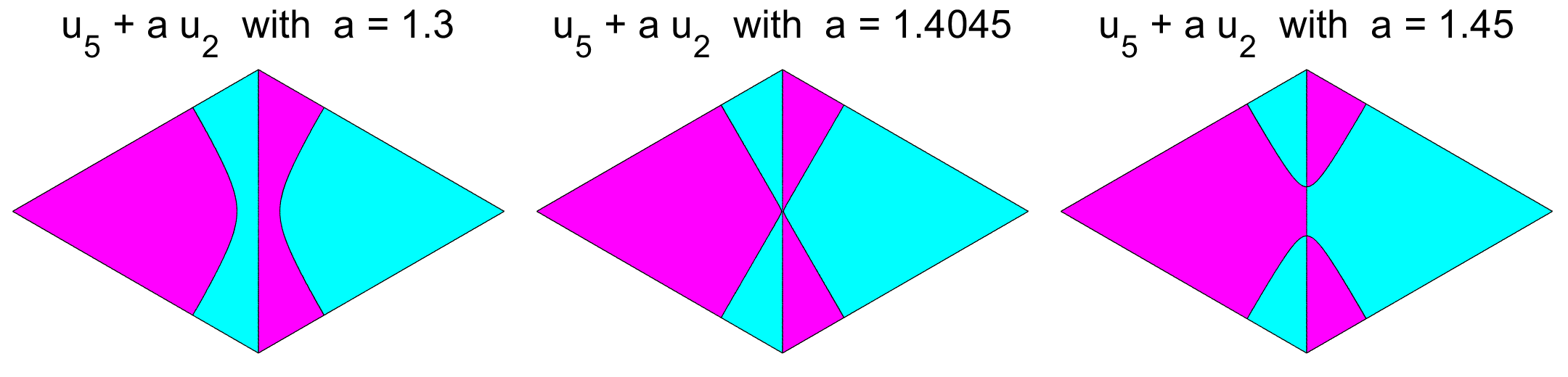}%\vspace{-7mm}
  \caption{ $(\rhe,\mf{d})$: $\ecp$ false in $\cE(\delta_2) \oplus \cE(\delta_5)$ }\label{F-rhome-dir2}
\end{figure}

\FloatBarrier

\textbf{Remark}. We refer to Section~\ref{S-final} for comments on our numerical approach.

\section{The regular hexagon}\label{S-hex}

\subsection{Symmetries and spectra}\label{SS-ch-p}
%\subsection{Preparation}\label{SS-rhome-p}%

Let $\cH$ denote the  interior of the regular hexagon with center at the origin, and sides of unit length. The diagonals $D_i, i=1,2,3$, joining opposite vertices, and the medians $M_j, j= 1, 2, 3$, joining the mid-points of opposite sides, are lines of mirror symmetry of the hexagon $\cH$, see Figure~\ref{F-hsymDM}.\medskip

\begin{figure}[!htb]
\centering
\includegraphics[scale=0.3]{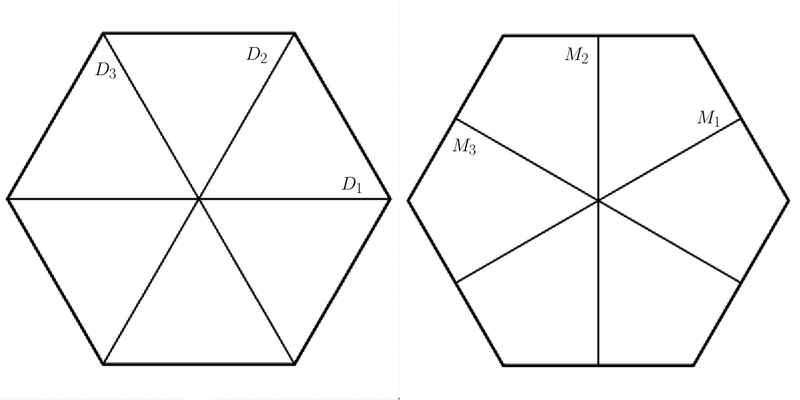}
\caption{The hexagon and its mirror symmetries}\label{F-hsymDM}
\end{figure}

We consider the diagonals $D_1$ and $M_2$, and the associated mirror symmetries of $\cH$. They commute,

\begin{equation}\label{E-hsym-6}
M_2 \circ D_1 = D_1 \circ M_2 = R_{\pi}\,,
\end{equation}
and we can therefore apply the methods of Subsection~\ref{SS-rhome-p}.\medskip

It follows that $D_1^*$ leaves the subspaces $\cS_{M_2,\pm}$ globally invariant, and that $M_2^*$ leaves the subspaces $\cS_{D_1,\pm}$ globally invariant. As a consequence, we have the following orthogonal decomposition of $L^2(\cH)$,
\begin{equation}\label{E-hsym-8}
L^2(\cH)= \cS_{+ , +} \poplus \cS_{- , -} \poplus \cS_{+ , -} \poplus \cS_{- , +} \,,
\end{equation}
where
\begin{equation}\label{E-hsym-10}
\cS_{\sigma , \tau} := \left\lbrace  \phi \in L^2(\cH) ~|~ D_1^* \phi = \sigma \, \phi \text{~and~} M_2^* \phi = \tau \, \phi
\right\rbrace ,
\end{equation}
for $\sigma, \tau \in \{+ \,, -\}\,.$\medskip

Similar decompositions hold for the Sobolev spaces $H^1(\cH)$ and $H^1_0(\cH)$, which are used in the variational presentation of the Neumann (resp. Dirichlet) eigenvalue problem for the hexagon. Since the Laplacian commutes with the isometries $D_1$ and $M_2$, such decompositions also hold for the eigenspaces of $-\Delta$ in $\cH$, with the boundary condition $\mf{b} \in \{\mf{d,n}\}$ on the boundary $\partial \cH$. \medskip

In the following figures, anti-nodal lines are indicated by dashed lines, and nodal lines by solid lines. Figure~\ref{F-hsymS} displays the nodal and anti-nodal lines common to all functions in $H^1(\cH) \cap \cS_{\sigma , \tau}$, where $\sigma, \tau \in \{+ , - \}$.

\begin{figure}[htb!]
  \centering
  \includegraphics[scale=0.5]{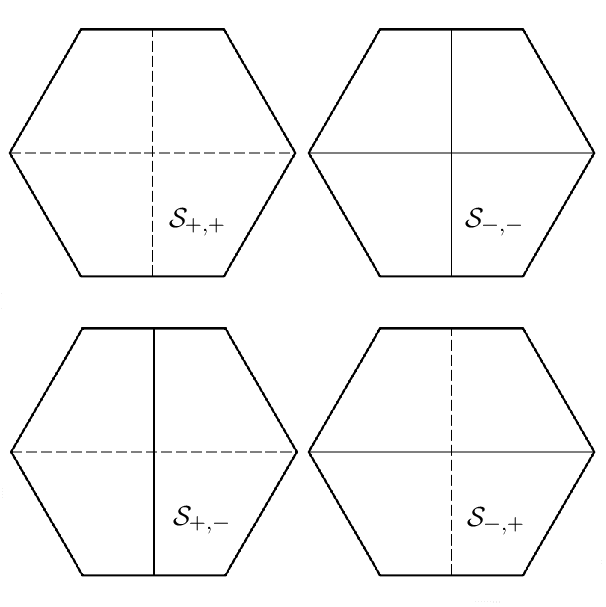}
  \caption{Spaces $\mathcal{S}_{\sigma,\tau}$ for $\sigma,\tau \in \{+,-\}$}\label{F-hsymS}
\end{figure}

\FloatBarrier

Denote by $R$ the rotation $R_{\frac{2\pi}{3}}$,
 \begin{equation}\label{E-hsym-12a}
 \left\{
 \begin{array}{ll}
 R &= D_2 \circ D_1 = M_2 \circ M_1 = \dots \,,\\[5pt]
 R^{-1} &= D_1 \circ D_2 = M_1 \circ M_2 = \dots \,.
 \end{array}
 \right.
 \end{equation}

 This is an isometry of $\cH$, and the action $R^*$ of $R$ on functions is an isometry of $L^2(\cH)$ with respect to the $L^2$-inner-product. \medskip

\begin{lemma}\label{L-hsym-2}
Let
\begin{equation}\label{E-hsym-14}
\left\lbrace
\begin{array}{ll}
\cS^0 & := \ker (R^*-I)\,, \text{and}\\[5pt]
\cS^1 & := \ker (R^{*2} + R^* + I)\,,
\end{array}%
\right.
\end{equation}
as subspaces of $L^2(\cH)$. Then
\begin{equation}\label{E-hsym-16}
\left\{
\begin{array}{ll}
\cS^0 & = \ima \left( R^{*2} + R^* + I \right) =
\ker\left( R^{*2} + R^* + I \right)^{\bot}\,,\\[5pt]
\cS^1 & = \ima (R^* - I) = \ker(R^*-I)^{\bot}\,,\\[5pt]
\end{array}%
\right.
\end{equation}
and we have the orthogonal decomposition
\begin{equation}\label{E-hsym-18}
L^2(\cH) = \cS^0 \poplus \cS^1\,.
\end{equation}
Here, as usual, $\ima(f)$ and $\ker(f)$ denote respectively the image and the kernel of the linear map $f$, and $E^{\bot}$ the subspace orthogonal to $E$.
\end{lemma}%

\proof The following polynomial identities hold.
\begin{equation}\label{E-hsym-20a}
x^3 - 1 = (x-1)(x^2 + x + 1)\,,
\end{equation}
\begin{equation}\label{E-hsym-20b}
3 = (x^2 + x + 1) - (x-1)(x+2)\,.
\end{equation}
Furthermore, the rotation $R$ satisfies
\begin{equation}\label{E-hsym-20c}
R^3 = I\,.
\end{equation}

From \eqref{E-hsym-20a} and \eqref{E-hsym-20c}, we deduce that
\begin{equation}\label{E-hsym-20d}
\ima (R^{*2}+ R^* + I) \subset \ker (R^* - I)\,,
\end{equation}
and
\begin{equation}\label{E-hsym-20e}
\ima (R^* - I) \subset \ker (R^{*2} + R + I)\,.
\end{equation}

From \eqref{E-hsym-20b}, we deduce that
\begin{equation}\label{E-hsym-20f}
L^2(\cH) = \ima (R^* - I) + \ima (R^{*2} + R + I)\,,
\end{equation}
and hence, using \eqref{E-hsym-20d} and \eqref{E-hsym-20e}
\begin{equation}\label{E-hsym-20g}
L^2(\cH) = \ker (R^* - I) + \ker (R^{*2} + R + I)\,.
\end{equation}
Clearly,
\begin{equation}\label{E-hsym-20h}
\ker (R^* - I) \cap \ker (R^{*2} + R + I) = \{0\}\,,
\end{equation}
so that, using \eqref{E-hsym-20d} and \eqref{E-hsym-20e},
\begin{equation}\label{E-hsym-20i}
\ima (R^* - I) \cap \ima (R^{*2} + R + I) = \{0\}\,.
\end{equation}

Let $\phi \in \ima (R^* - I)$ and $\psi \in \ima (R^{*2} + R + I)$. Using the fact that $R^*$ is an isometry and \eqref{E-hsym-20c}, we conclude that $\ps{\phi,\psi} = 0$ (the $L^2$ inner product). Therefore,
\begin{equation}\label{E-hsym-20j}
\ima (R^* - I) = \ima (R^{*2} + R + I)^{\bot}\,.
\end{equation}

From the previous identities, we deduce that
\begin{equation}\label{E-hsym-20k}
L^2(\cH) = \ima (R^* - I) \poplus \ima (R^{*2} + R + I)\,,
\end{equation}
\begin{equation}\label{E-hsym-20m}
L^2(\cH) = \ker (R^* - I) \poplus \ker (R^{*2} + R + I)\,,
\end{equation}
\begin{equation}\label{E-hsym-20n}
\ima (R^* - I) = \ker (R^{*2} + R + I)\,,
\end{equation}
\begin{equation}\label{E-hsym-20p}
\ima (R^{*2} + R + I) = \ker (R^* - I)\,.
\end{equation}

The lemma is proved. \hfill \qed

\begin{lemma}\label{L-hsym-6}
For $\sigma, \tau \in \{+,-\}$, using the notation \eqref{E-hsym-14}, define the subspaces
\begin{equation}\label{E-hsym-L4a}
\left\{
\begin{array}{ll}
\cS^0_{\sigma,\tau} & := \cS_{\sigma,\tau} \cap \cS^0 \,,\\[5pt]
\cS^1_{\sigma,\tau} & := \cS_{\sigma,\tau} \cap \cS^1 \,.
\end{array}
\right.
\end{equation}

Define the map
\begin{equation}\label{E-hsym-L6a}
\left\{
\begin{array}{l}
T : L^2(\cH) \to L^2(\cH)\,,\\[5pt]
T(\phi) = R^*\phi - R^{*2}\phi\,.
\end{array}%
\right.
\end{equation}
Then,
\begin{enumerate}
  \item $\ker(T) = \cS^0$ and $\ker(T)^{\bot} = \cS^1$.
  \item $T^2 = (R^{*2} + R^* + I) - 3 I$\,;\, $T \circ T |\cS^1 = - 3I$\, ;\, $T(\cS^1) = \cS^1$\, ;\, $T$ is a bijection from $\cS^1$ onto $\cS^1$.
  \item $T \circ \Delta = \Delta \circ T$, so that $T$ leaves the eigenspaces of $\Delta$ globally invariant.
  \item For all $\sigma,\tau \in \{+,-\}$, the subspace $\cS^0_{\sigma,\tau}$ satisfies
  \begin{equation}\label{E-hsym-L4b}
\cS^0_{\sigma , \tau} = \left\lbrace \phi \in L^2(\cH) ~|~ D_i^*\phi = \sigma \, \phi \,, M_j^*\phi = \tau \, \phi \,, 1 \le i,j \le 3 \right\rbrace .
\end{equation}
  \item For all $\sigma, \tau \in \{+ , -\}$, $T\left( \cS_{\sigma , \tau}\right) \subset \cS_{-\sigma , -\tau}$.
  \item For all $\sigma, \tau \in \{+ , -\}$, $\ker\left( T|\cS_{\sigma , \tau}\right) = \cS^0_{\sigma , \tau}$, and $\ima(T|\cS_{\sigma,\tau}) \subset \cS^1_{-\sigma,-\tau}$.
   \item For all $\sigma, \tau \in \{+ , -\}$,
   \begin{equation}\label{E-hsym-L4c}
\cS_{\sigma,\tau} =  \cS^0_{\sigma,\tau} \poplus  \cS^1_{\sigma,\tau}\,,
\end{equation}
and $T$ is a bijection from $\cS^1_{\sigma,\tau}$ onto $\cS^1_{-\sigma,-\tau}\,$.
\end{enumerate}
\end{lemma}%

\proof \emph{Assertion~(1)}~ If $\phi \in \ker(T)$, then $R^{*2}\phi = R^*\phi$, so that $\phi = R^{*3}\phi = R^{*2}\phi = R^*\phi$, and $\phi \in \cS^0$. The converse is clear. The second equality follows from Lemma~\ref{L-hsym-2}.\smallskip

\emph{Assertion~(2)}~ The first two equalities are clear. If $\phi \in \cS^1$, then $(R^{*2} + R^* + I) T(\phi) = (R^{*2} + R^* + I)(R^* - I) R^*\phi = 0$, and $T(\phi) \in \cS^1$. If $\phi \in \cS^1$, then $T(T(\phi)) = - 3 \phi$, so that $\phi = T(\psi)$ with $\psi = - \frac 13 T(\phi) \in \cS^1$. This implies that $T(\cS^1) = \cS^1$. On the other hand, if $T(\phi) = 0$ and $\phi \in \cS^1$, then $\phi \in \cS^0 \cap \cS^1 = \{0\}$.\smallskip

\emph{Assertion~(3)}~ This assertion is clear because $R$ is an isometry, so that $R^*$ commutes with $\Delta$. It follows that $T$ commutes with $\Delta$ as well, and hence that $T$ leaves each eigenspace $\cE(\lambda)$ globally invariant. \smallskip

\emph{Assertion~(4)}~  Let $\phi \in \cS^0_{\sigma,\tau}$. Then $R^*\phi = \phi$ and $D_1^*\phi = \sigma \phi$. Since $R = D_1 \circ D_3$, it follows that $\phi = R^* \phi = D_3^* D_1^* \phi = \sigma D_3^* \phi$, so that $D_3^*\phi = \sigma \phi$. The other equalities are established in a similar way. On the other hand, if $D_1^* \phi = D_3^* \phi = \sigma \phi$, then $$R^*\phi = (D_1 \circ D_3)^* \phi = \sigma^2 \phi = \phi\,.$$

\emph{Assertion~(5)}~ Let $\phi \in \cS_{\sigma,\tau}$, i.e., $D_1^*\phi = \sigma \phi$ and $M_2^* \phi = \tau \phi$. Then,
\begin{equation*}
\begin{array}{ll}
D_1^*\left( T(\phi) \right) & = D_1^* R^*\phi - D_1^* R^{*2} \phi\\[3pt]
&= D_1^*(D_2 \circ D_1)^* \phi - D_1^*(D_3 \circ D_1)^* \phi\\[3pt]
&= D_2^*\phi - D_3^*\phi  \\[3pt]
&= (D_1\circ D_1 \circ D_2)^* \phi - (D_1\circ D_1 \circ D_3)^* \phi\\[3pt]
&= (D_1 \circ D_2)^* D_1^* \phi - (D_1 \circ D_3)^* D_1^* \phi\\[3pt]
&= \sigma R^{*2}\phi - \sigma R^* \phi\\[3pt]
&= - \sigma T(\phi) \,.
\end{array}%
\end{equation*}

Similarly, one shows that $M_2^* \left( T(\phi) \right) = - \tau T(\phi)$.\smallskip

\emph{Assertion~(6)}~ The first equality follows from Assertion~(1). The second equality follows from Assertion~(5) and the fact that $\ima(T) \subset \cS^1$ because $R^{*3}=I$.\smallskip

\emph{Assertion~(7)}~ Take $\phi \in \cS_{\sigma,\tau}$. Then $T(\phi) \in \cS^1 \cap \cS_{-\sigma,-\tau}$ and hence $T^2(\phi) \in \cS^1 \cap \cS_{\sigma,\tau}$. We also have $T^2(\phi) = (R^{*2}+ R^* + I)(\phi) - 3 \phi$, which implies that $(R^{*2}+ R^* + I)(\phi) \in \cS^0 \cap \cS_{\sigma,\tau}$. The initial equality can be rewritten $\phi = \frac 13 (R^{*2}+ R^* + I)(\phi) - \frac 13 T^2(\phi)$ which implies that $\cS_{\sigma,\tau} = \cS^0 \cap \cS_{\sigma,\tau} \oplus \cS^1 \cap \cS_{\sigma,\tau}$. \smallskip

We have $T(\cS^1 \cap \cS_{\sigma,\tau}) \subset \cS^1 \cap \cS_{-\sigma,-\tau}$. If $\phi \in \cS^1 \cap \cS_{\sigma,\tau}$ and $T(\phi) = 0$, then $\phi \in \cS^0 \cap \cS^1 = \{0\}$. If $\phi \in \cS^1 \cap \cS_{-\sigma,-\tau}$, then $\phi = T(\psi)$ with $\psi = - \frac 13 T(\phi) \in \cS^1 \cap \cS_{\sigma,\tau}$. This proves that $T$ is bijective.
\hfill \qed \medskip

Figure~\ref{F-hsymS0} displays the nodal and anti-nodal lines common to all functions in $H^1(\cH) \cap \cS^0_{\sigma , \tau}$, with $\sigma, \tau \in \{+ , - \}$.\medskip

\begin{figure}[htb!]
  \centering
  \includegraphics[scale=0.5]{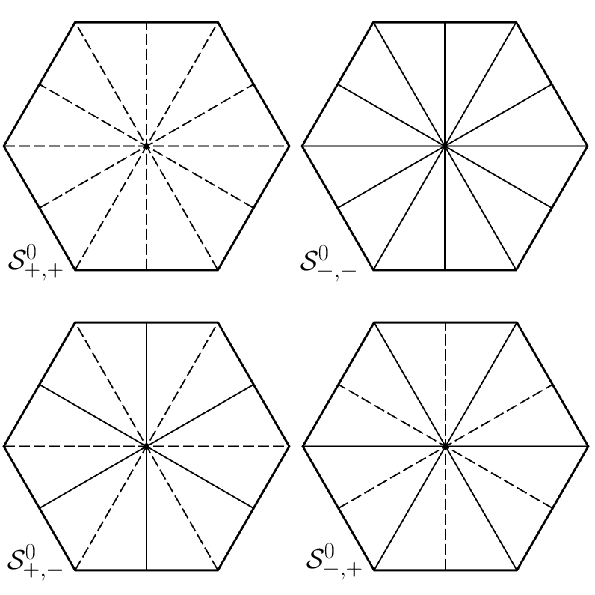}
  \caption{The spaces $\mathcal{S}^0_{\sigma,\tau}$ for
  $\sigma, \tau \in \{+,-\}$}\label{F-hsymS0}
\end{figure}

The Laplacian $\Delta$ commutes with isometries. It follows that the eigen\-spa\-ces of the Laplacian $\Delta$ in $\cH$, with either the Neumann or Dirichlet boundary condition on $\partial \cH$, decompose orthogonally according to the spaces $\cS_{\sigma , \tau}$, $\cS^0$ and $\cS^1$. More precisely, if $\cE(\lambda)$ is the eigenspace of $-\Delta$ for the eigenvalue $\lambda$ in the Neumann (resp. Dirichlet) spectrum of $\Delta$, then
\begin{equation}\label{E-hsym-24}
\cE(\lambda) = \pbigoplus_{\sigma,\tau \in \{+ \,, -\}} \big( \cE(\lambda) \cap \cS^0_{\sigma , \tau}\big) \poplus \big( \cE(\lambda) \cap \cS^1_{\sigma , \tau}\big).
\end{equation}
\medskip

\begin{remark}\label{R-hsym-2}
If $\cE(\lambda) \cap \cS^1_{\sigma , \tau} $ has dimension $p$, then by Lemma~\ref{L-hsym-6},  $\cE(\lambda) \cap \cS^1_{-\sigma , -\tau}$ has dimension $p$. It follows that $\cE(\lambda)$ has dimension at least $2p$.
\end{remark}

\begin{remark}\label{R-hsym-4}
Let $\lambda$ be a simple eigenvalue. Then, any associated eigenfunction $\phi$ is either invariant or anti-invariant under any mirror symmetry $L$ which leaves $\cH$ invariant, and invariant under $R^{*}$. It follows that $\phi \in \cS^{0}_{\sigma,\tau}$ for some pair $(\sigma,\tau)$.
\end{remark}

\begin{remark}\label{R-hsym-6}
Assume that $\phi \in \cE(\lambda) \cap \cS^0_{\sigma,\tau}$. Then,
by Courant's theorem, we have $6 \le \beta_0(\phi) \le \kappa(\lambda)$ if $(\sigma,\tau) = (+,-)$ or $(-,+)$, and $12 \le \beta_0(\phi) \le \kappa(\lambda)$ if $(\sigma,\tau) = (-,-)$. If $\phi \in \cS^{0}_{+,+}$, then $\phi$ arises from an eigenfunction of $\cT_{h}$ with Neumann boundary condition on the sides $1$ and $2$.
\end{remark}%

\subsection{Symmetries and boundary conditions on sub-domains}\label{SS-hsym-1}

Let $\cQ$ (resp. $\cP$) denote the interior of the quadrilateral (resp. the pentagon) which appears in Figure~\ref{F-hsymQP}. Let $\cR$ (resp. $\cT_h$) denote the interior of the quadrilateral (resp. of the hemiequilateral triangle) which appears in Figure~\ref{F-hsymRTh}. Then, $\overline{\cQ}$ (resp. $\overline{\cP}$) is a fundamental domain of the action of the mirror symmetry $D_1$ (resp. $M_2$), and $\overline{\cR}$ is a fundamental domain for the action of the group generated by $D_1$ and $M_2$. \medskip

Using the notation of Subsection~\ref{SS-rhome-p}, we consider the following mixed eigenvalue problems in the domains $\cH, \cP, \cQ$ and $\cR$. \medskip

\noib~ For the hexagon $\cH$, we do not decompose the boundary,
\begin{equation}\label{E-hsym-20H}
\partial \cH = \Gamma_{\cH,1}\,,
\end{equation}
and we consider  the eigenvalue problem $(\cH,\mf{b})$ with $\mf{b} \in \{\mf{n,d}\}$. \medskip

\noib~ For the quadrilateral $\cQ$, we decompose the boundary as
\begin{equation}\label{E-hsym-20Q}
\left\{
\begin{array}{ll}
\partial \cQ &= \overline{\Gamma_{\cQ,1} \sqcup \Gamma_{\cQ,2}}\,, \text{~with~} \\[5pt]
\Gamma_{\cQ,1} &= \overline{\cQ} \cap D_1 \,, \\[5pt]
\Gamma_{\cQ,2} &= \overline{\cQ} \cap \partial \cH \,,
\end{array}
\right.
\end{equation}
and we consider the  eigenvalue problems $(\cQ, \mf{ab})$, with $\mf{a, b} \in \{\mf{n,d}\}$.\medskip

\noib~ For the pentagon $\cP$, we decompose the boundary as
\begin{equation}\label{E-hsym-20P}
\left\{
\begin{array}{ll}
\partial \cP &= \overline{\Gamma_{\cP,1} \sqcup \Gamma_{\cP,2}}\,, \text{~with~} \\[5pt]
\Gamma_{\cP,1} &= \overline{\cP} \cap M_2 \,, \\[5pt]
\Gamma_{\cP,2} &= \overline{\cP} \cap \partial \cH \,,
\end{array}
\right.
\end{equation}
and we consider the  eigenvalue problems $(\cP, \mf{ab})$, with $\mf{a, b} \in \{\mf{n,d}\}$. \medskip

\noib~ For the quadrilateral $\cR$, we decompose the boundary as
\begin{equation}\label{E-hsym-20R}
\left\{
\begin{array}{ll}
\partial \cR &= \overline{\Gamma_{\cR,1} \sqcup \Gamma_{\cR,2} \sqcup \Gamma_{\cR,3}}\,, \text{~with~} \\[5pt]
\Gamma_{\cR,1} &= \overline{\cR} \cap M_2 \,, \\[5pt]
\Gamma_{\cR,2} &= \overline{\cR} \cap D_1 \,, \\[5pt]
\Gamma_{\cR,3} &= \overline{\cR} \cap \partial \cH \,,
\end{array}
\right.
\end{equation}
and we consider the  eigenvalue problems $(\cR, \mf{abc})$, with $\mf{a, b, c} \in \{\mf{n,d}\}$. \medskip

\noib~ We also consider the hemiequilateral triangle $\cT_h$, its sides ordered in decreasing order of length,  and the eigenvalue problems $(\cT_h, \mf{abc})$, with $\mf{a, b, c} \in \{\mf{n,d}\}$. For the equilateral triangle $\cT_e$, up to isometry, it is not necessary to order the sides, and we consider the  eigenvalue problems $(\cT_e, \mf{abc})$ with $\mf{a, b, c} \in \{\mf{n,d}\}$.\medskip

The boundary decompositions for the domains $\cP, \cQ, \cR$, and for the hemiequilateral triangle $\cT_h$, are illustrated in Figures~\ref{F-hsymQP} and \ref{F-hsymRTh}.\medskip

\begin{figure}[htb!]
  \centering
  \includegraphics[scale=0.25]{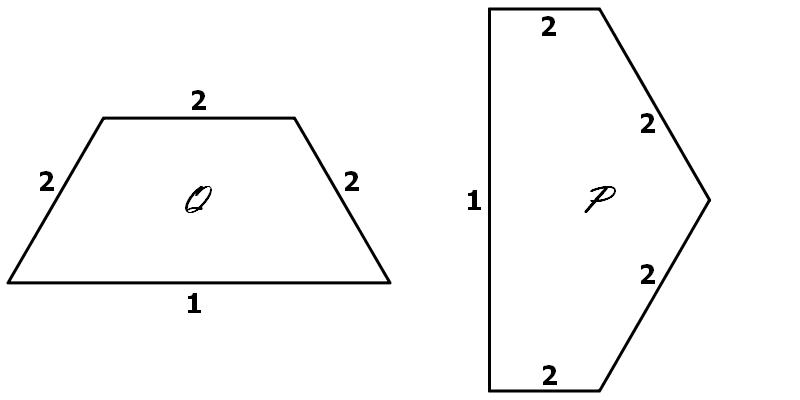}
  \caption{The sub-domains $\cQ$ and $\cP$}\label{F-hsymQP}
\end{figure}

\begin{figure}[htb!]
  \centering
  \includegraphics[scale=0.25]{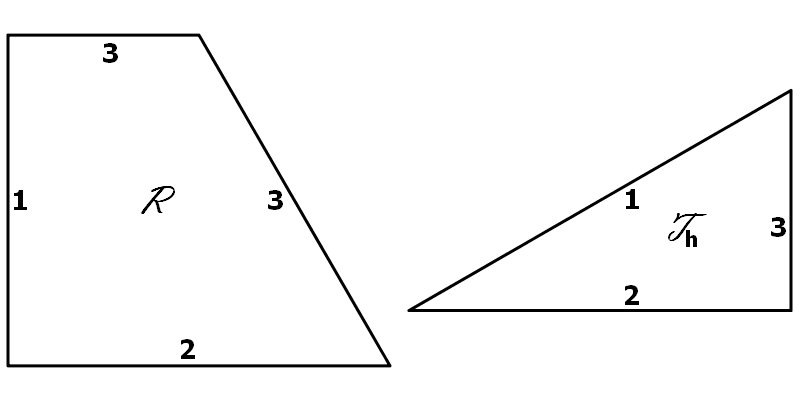}
  \caption{The sub-domains $\cR$ and $\cT_h$}\label{F-hsymRTh}
\end{figure}

Consider the eigenvalue problem $(\cH,\mf{c})$ for the hexagon, with $\mf{c} \in \{\mf{n,d}\}$. Let $\cE(\mu,\mf{c})$ be an eigenspace of $-\Delta$ for $(\cH,\mf{c})$. If $\phi \in \cE(\mu,\mf{c}) \cap \cS_{\sigma,\tau}$, then the restriction $\phi |\cR$, of the function $\phi$ to the domain $\cR$, is an eigenfunction of $- \Delta$ in $(\cR, \varepsilon(\sigma)\varepsilon(\tau)\mf{c})$, where
\begin{equation}\label{E-hsym-20s}
\varepsilon(+) = \mf{n} \text{~and~} \varepsilon(-) = \mf{d} \,,
\end{equation}
associated with the same eigenvalue $\mu$.\medskip

Conversely, let $\psi$ be an eigenfunction of $-\Delta$ in $(\cR,\mf{abc})$, associated with the eigenvalue $\mu$, where $\mf{c}$ is the given boundary condition on $\partial \cH$, and $\mf{a,b} \in \{\mf{d,n}\}$ are boundary conditions on the sides $M_2$, $D_1$. Extend $\psi$ to a function $\check{\psi}$ defined on $\cH$, by symmetry (resp. anti-symmetry) with respect to $M_2$, if $\mf{a}=\mf{n}$ (resp. if $\mf{a}=\mf{d}$), and by symmetry (resp. anti-symmetry) with respect to $D_1$, if $\mf{b}=\mf{n}$ (resp. if $\mf{b}=\mf{d}$). Then, the function $\check{\psi}$ is an eigenfunction of $-\Delta$ for $(\cH,\mf{c})$, associated with the eigenvalue $\mu$, and belongs to $\cS_{\sigma,\tau}$ with $\varepsilon(\sigma)=\mf{a}$ and $\varepsilon(\tau)=\mf{b}$. \medskip

As in Subsection~\ref{SS-rhome-p}, we have,

\begin{proposition}\label{P-hsym-2}
The eigenvalues and eigenfunctions of $(\cH,\mf{c})$ in $\cS_{\sigma,\tau}$ are in bijection with the eigenvalues and eigenfunctions of $(\cR,\mf{abc})$, with the boundary condition $\mf{a}$ on $M_2$, with $\mf{a}=\mf{d}$, if $\sigma=-$, and $\mf{a}=\mf{n}$, if $\sigma=+$; and, similarly, with the boundary condition $\mf{b}$ on $D_1$, with $\mf{b}=\mf{d}$, if $\tau=-$, and $\mf{b}=\mf{n}$, if $\tau=+$. Similar statements hold for $\cP$, $\cQ$ and $\cT_h$ respectively.
\end{proposition}%

\subsection{Identification of the first Dirichlet eigenvalues of the regular hexagon}\label{SS-hDir}

%Throughout this Section, we always list the eigenvalues of a given domain %in non-decreasing order, with multiplicities. \medskip

Throughout this section, we fix the Dirichlet boundary condition $\mf{d}$ on $\partial \cH$, and we denote the Dirichlet eigenvalues of $\cH$ by
\begin{equation}\label{E-hDir-2}
\delta_1(\cH) < \delta_2(\cH) \le \delta_3(\cH)\le \cdots \le \delta_6(\cH) \le \delta_7(\cH) \le \cdots \,,
\end{equation}
and the Dirichlet spectrum of the hexagon by $\spc(\cH,\mf{d})$.\\

\subsubsection{Numerical computations}\label{SSS-DirNum}

Numerical approximations for the Di\-ri\-chlet eigenvalues of the regular hexagon have been obtained by several authors, see for example \cite{BauRei1978, Jon1993, CurKut1999}, or the recent paper \cite{Jon2016}.
\medskip

The main idea, in order to make the identification of multiple Dirichlet eigenvalues of $\cH$ easier, is to take the symmetries of $\cH$ (see Section~\ref{SS-hsym-1}) into account from the start. For this purpose, one computes the eigenvalues of the domains $\cR$ and $\cT_h$, for mixed boundary conditions $\mf{abd}$, with $\mf{a,b} \in \{\mf{d,n}\}$. \medskip

Table~\ref{T-DirNum-0b} displays the first four eigenvalues of $(\cR,\mf{abd})$, as computed with \textsc{matlab}, and contains some useful relations between these eigenvalues.

\begin{table}[!htb]
\caption{$\cR$-shape, mixed boundary conditions, first four approximate eigenvalues}\label{T-DirNum-0b}
\centering
\begin{tabular}[c]{|c|c|c|c|c|c|c|c|}%
\hline
$(\cR,\mf{abd})$ & $\mu_1$ & $<$ & $\mu_2$ & $\le$ & $\mu_3$& $\le$ & $\mu_4$\\
\hline
$\mf{nnd}$ & $\phantom{1}7.16$ & $<$ & $32.45$ & $\le$ & $37.49$ & $\le$ & $\phantom{1}70.14$\\
\hline
    & \rotatebox{90}{$\cfr >$} && \rotatebox{90}{$\cfr >$} && \rotatebox{90}{$\cfr >$} && \rotatebox{90}{$\cfr >$}\\
\hline
$\mf{dnd}$ & $18.13$ & $<$ & $47.63$ & $\le$ & $60.11$ & $\le$ & $\phantom{1}94.33$\\
\hline
& ? && ? && ? && ? \\
\hline
$\mf{ndd}$ & $18.13$ & $<$ & $52.64$ & $\le$ & $60.11$ & $\le$ & $\phantom{1}94.33$\\
\hline
    & \rotatebox{90}{$\cfr >$} && \rotatebox{90}{$\cfr >$} && \rotatebox{90}{$\cfr >$} && \rotatebox{90}{$\cfr >$}\\
\hline
$\mf{ddd}$ & $32.45$ & $<$  & $70.14$ & $\le$ & $87.53$ & $\le$ & $122.82$\\
\hline
\end{tabular}%
\end{table}%

\begin{remark}\label{R-DirNum-2}
The eigenvalues in Table~\ref{T-DirNum-0b} are partially ordered `vertically'. Indeed, for $i \ge 1$, we have the strict inequalities,
\begin{equation}\label{E-DirNum-R2}
\left\{
\begin{array}{l}
\mu_i(\cR,\mf{nnd}) < \mu_i(\cR,\mf{dnd}) < \mu_i(\cR,\mf{ddd}) \,,\\[5pt]
\mu_i(\cR,\mf{nnd}) < \mu_i(\cR,\mf{ndd}) < \mu_i(\cR,\mf{ddd})\,,
\end{array}%
\right.
\end{equation}
which follow from  Proposition~\ref{P-use-2}, see \cite[Proposition~2.3]{LoRo2017}. These inequalities are indicated in the table by the (rotated) strict inequality signs. Note that it is in general not possible to compare the eigenvalues $\mu_i(\cR,\mf{dnd})$ and $\mu_i(\cR,\mf{ndd})$. This is indicated in the table by the black question marks.
\end{remark}%

Table~\ref{T-DirNum-0h} displays some eigenvalues of $(\cT_h,\mf{abd})$, for $\mf{a,b} \in \{\mf{d,n}\}$. The lower bound in the second line follows from Dirichlet monotonicity (see Subsection~\ref{SS-DirLU}). In the third line, we have used the fact due to P\'{o}lya (see \cite{LaSi2017}) that the first Dirichlet eigenvalue of a kite-shape is bounded from below by the first Dirichlet eigenvalue of a square with the same area. In the last two lines, the eigenvalues are known explicitly.

\begin{table}[!htb] %\frac{16\pi^2}{9}
\caption{Some eigenvalues of the hemiequilateral triangle}\label{T-DirNum-0h}
\centering
\begin{tabular}[c]{|c|c|c|c|}%
\hline
$\cS$ & $(\cT_h,\mf{abd})$ & Eigenvalue & Value $\vsp$\\[5pt]
\hline
$\cS^0_{+,+}$ & $(\cT_h,\mf{nnd})$ & $\mu_1$ & $\mu_1 \approx 7.16 \vsp$\\[5pt]
\hline
$\cS^0_{+,+}$ & $(\cT_h,\mf{nnd})$ & $\mu_2$ & $\mu_2 \approx 37.49  > 26.37 \vsp$\\[5pt]
\hline
$\cS^0_{+,-}$ & $(\cT_h,\mf{ndd})$ & $\mu_1$ & $\mu_1 \ge \frac{4\pi^2}{\sqrt{3}} \approx 22.79 \vsp$\\[5pt]
\hline
$\cS^0_{-,+}$ & $(\cT_h,\mf{dnd})$ & $\mu_1$ & $\mu_1 = 3\, \frac{16\pi^2}{9} \approx 52.64 \vsp$\\[5pt]
\hline
$\cS^0_{-,-}$ & $(\cT_h,\mf{ddd})$ & $\mu_1$ & $\mu_1 =7\, \frac{16\pi^2}{3} \approx 122.82 \vsp$\\[5pt]
\hline
\end{tabular}%
\end{table}%
\vspace{-2mm}

\begin{remark}\label{R-DirNum-2a}
The figures in Table~\ref{T-DirNum-0b} suggest that the Dirichlet ei\-gen\-values of $\cH$ come into four well separated sets $\{\delta_1(\cH)\}$, $\{\delta_2(\cH),\delta_3(\cH)\}$, $\{\delta_4(\cH),\delta_5(\cH)\}$ and $\{\delta_6(\cH)\}$.
\end{remark}%

\subsubsection{Lower and upper bounds for the Dirichlet eigenvalues}\label{SS-DirLU}

The hexa\-gon $\cH$ is inscribed in the unit disk $\cD$, and contains the disk with radius $\frac{\sqrt{3}}{2}$. By domain monotonicity for the Dirichlet eigenvalues, we have the following lower and upper bounds for the Dirichlet eigenvalues of $\cH$,
\begin{equation}\label{E-DirLU-2a}
\delta_j(\cD) < \delta_j(\cH) < \frac{4}{3}\, \delta_j(\cD) \text{~for any~} j \ge 1\,.
\end{equation}

The Dirichlet eigenvalues of the unit disk $\cD$ satisfy the relations
\begin{equation}\label{E-DirLU-4}
\left\{
\begin{array}{ll}
j_{0,1}^2 = \delta_1(\cD) &< j_{1,1}^2 = \delta_2(\cD) = \delta_3(\cD) \\
&< j_{2,1}^2 = \delta_4(\cD) = \delta_5(\cD) < j_{0,2}^2 = \delta_6(\cD)\\
&< j_{3,1}^2 = \delta_7(\cD) = \delta_8(\cD) < \cdots
\end{array}%
\right.
\end{equation}
where $j_{m,n}$ is the $n$-th positive zero of the Bessel function $J_m\,$.\medskip

Corresponding eigenfunctions are given by
\begin{equation}\label{E-DirLU-6}
\left\{
\begin{array}{lll}
\delta_1(\cD) & \leftrightsquigarrow & J_0(j_{0,1}r)\,,\\[5pt]
\delta_2(\cD) & \leftrightsquigarrow & J_1(j_{1,1}r)\cos(\theta) \text{~and~} J_1(j_{1,1}r)\sin(\theta) \,,\\[5pt]
\delta_4(\cD) & \leftrightsquigarrow & J_2(j_{2,1}r)\cos(2\theta) \text{~and~} J_2(j_{2,1}r)\sin(2\theta) \,,\\[5pt]
\delta_6(\cD) & \leftrightsquigarrow & J_0(j_{0,2}r)\,,\\[5pt]
\delta_7(\cD) & \leftrightsquigarrow & J_3(j_{3,1}r)\cos(3\theta) \text{~and~} J_3(j_{3,1}r)\sin(3\theta) \,.
\end{array}%
\right.
\end{equation}
with the nodal patterns represented in Figure~\ref{F-hdiskd6}.\medskip

\begin{figure}[htb!]
  \centering
\includegraphics[scale=0.15]{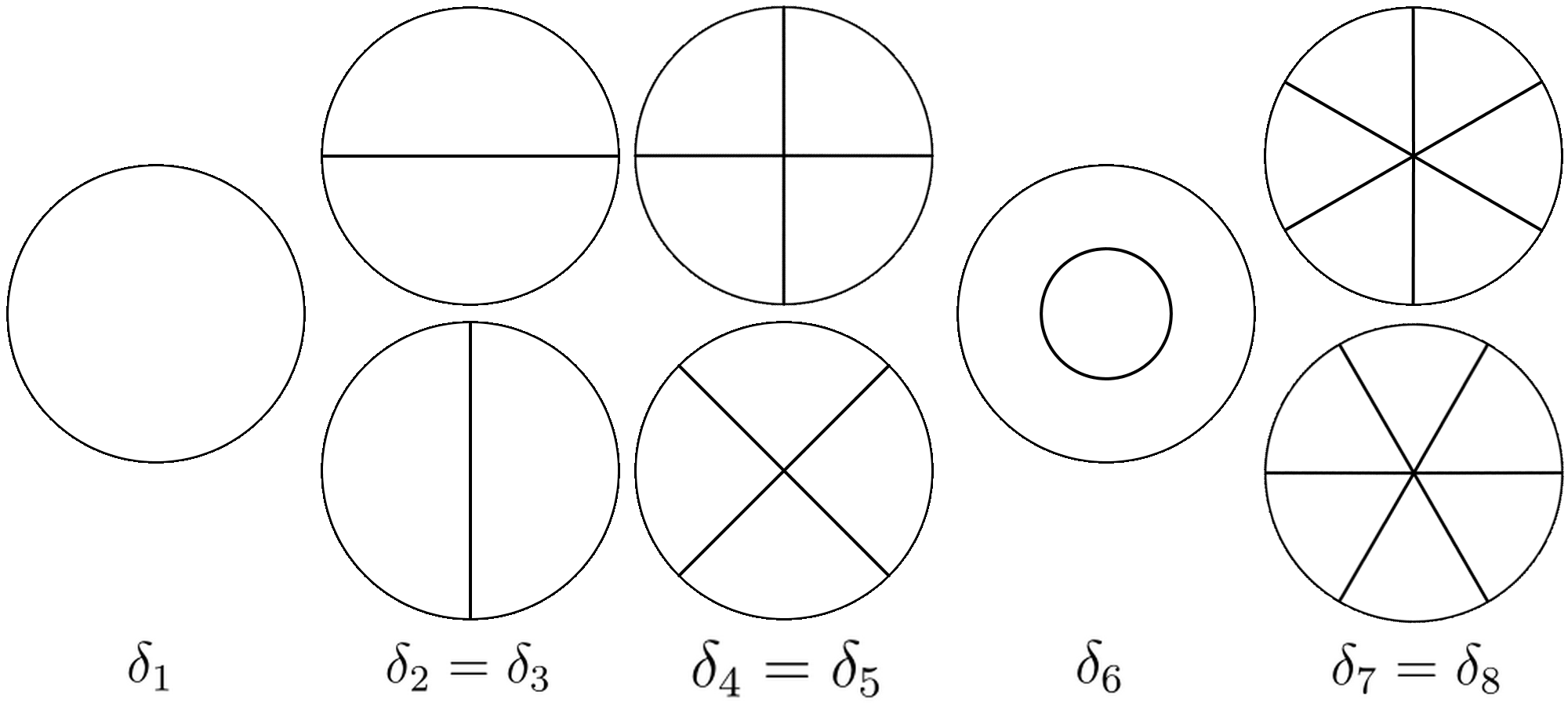}
  \caption{Nodal patterns in the first five Dirichlet eigenspaces of the unit disk}\label{F-hdiskd6}
\end{figure}

The lower and upper bounds \eqref{E-DirLU-2a} for the first eight eigenvalues are summarized in Table~\ref{T-DirLU-2}.

\begin{table}[!htb]
\caption{Bounds for the first eight Dirichlet eigenvalues of the hexagon, using domain monotonicity}\label{T-DirLU-2}
\centering
\begin{tabular}[c]{|c|c|c|}%
\hline
Eigenvalue & Lower bound & Upper bound\\
\hline
$\delta_1(\cH)$ & $\phantom{1}5.78$ & $\phantom{1}7.72$ \\
\hline
$\delta_2(\cH)$, $\delta_3(\cH)$ & $14.68$ & $19.58$ \\
\hline
$\delta_4(\cH)$, $\delta_5(\cH)$ & $26.37$ & $35.17$ \\
\hline
$\delta_6(\cH)$ & $30.47$ & $40.63$ \\
\hline
$\delta_7(\cH), \delta_8(\cH)$ & $40.70$ & $54.28$ \\
\hline
\end{tabular}%
\end{table}%

Similar bounds can be given for the first Dirichlet eigenvalues of the domains $\cP$, $\cQ$ and $\cR$, see Table~\ref{T-DirLU-4}.

\begin{table}[!htb]
\caption{Bounds for the first Dirichlet eigenvalues of $\cP$, $\cQ$ and $\cR$, using domain monotonicity}\label{T-DirLU-4}
\centering
\begin{tabular}[c]{|c|c|c|}%
\hline
Eigenvalue & Lower bound & Upper bound\\
\hline
$\delta_1(\cQ)$, $\delta_1(\cP)$  & $14.68$ & $19.58$ \\
\hline
$\delta_1(\cR)$ & $26.37$ & $35.17$ \\
\hline
\end{tabular}%
\end{table}%

\FloatBarrier

It is easy to compute the eigenvalues of a sector of the unit disk, with Neumann boundary condition on the sides of the sector, and Dirichlet boundary condition on the arc of circle. In particular, the first (resp. second) eigenvalue of such a mixed Neumann-Dirichlet problem in the circular sector of angle $\frac{\pi}{6}$ is $j_{0,1}^2$ (resp. $j_{0,2}^2$). From  domain monotonicity,  we can compare the eigenvalues of $(\cT_h,\mf{nnd})$ with the eigenvalues of the sectors with angle $\frac{\pi}{6}$, and respective radii $\frac{\sqrt{3}}{2}$ and $1$, with the Neumann boundary condition on the boundary radii, and with the Dirichlet boundary condition on the arc of circle, see Figure~\ref{F-sector}. We obtain the inequalities
\begin{equation}\label{E-DirLU-Th}
\left\{
\begin{array}{c}
~~5.78 < j_{0,1}^2 < \mu_1(\cT_h,\mf{nnd}) \approx 7.16 < \frac 43 j_{0,1}^2 < ~~7.72 \,,\\[5pt]
30.47 < j_{0,2}^2 < \mu_2(\cT_h,\mf{nnd}) \approx 37.49 < \frac 43 j_{0,2}^2 < 40.63
\,.
\end{array}
\right.
\end{equation}

\begin{figure}[htb!]
  \centering
  \includegraphics[scale=0.3]{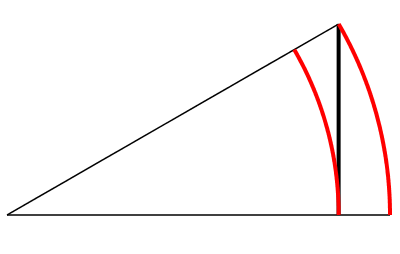}%
  \caption{Domain monotonicity}\label{F-sector}
\end{figure}

Taking into account the bounds given in Table~\ref{T-DirLU-2}, we have the relations,

\begin{equation}\label{E-DirLU-T2a}
\left\{
\begin{array}{rl}
]5.78 \,, 7.72[ \, \cap \, \sigma(\cH,d) &= \{\delta_1(\cH)\}\,, \\[5pt]
]14.68 \,, 19.58[ \, \cap \, \sigma(\cH,d) &= \{\delta_2(\cH),\delta_3(\cH)\} \,, \\[5pt]
]26.37 \,, 40.63[ \, \cap \, \sigma(\cH,d) &= \{\delta_4(\cH), \delta_5(\cH), \delta_6(\cH)\} \,, \\[5pt]
40.70 & \le \delta_7(\cH)\,.
\end{array}%
\right.
\end{equation}
Subsection~\ref{SS-hsym-1}, the bounds provided by Table~\ref{T-DirLU-4}, and inequalities \eqref{E-DirLU-Th}, imply that
\begin{equation}\label{E-DirLU-T2aa}
\left\{
\begin{array}{rl}
\mu_1(\cT_h,\mf{nnd}) & = \delta_1(\cH)\,,\\[5pt]
\{\delta_1(\cP),\delta_1(\cQ) \}  & \subset  \{\delta_2(\cH),\delta_3(\cH)\}\,,\\[5pt]
\{\delta_1(\cR),\mu_2(\cT_h,\mf{nnd})\} & \subset  \{\delta_4(\cH), \delta_5(\cH), \delta_6(\cH)\} \,.
\end{array}%
\right.
\end{equation}

We have the following proposition.

\begin{proposition}\label{P-hDir-2}
The first eigenvalues of $(\cH,\mf{d})$, satisfy the inequalities,
\begin{equation}\label{E-hDir-P2a}
\delta_1(\cH) < \delta_2(\cH) = \delta_3(\cH) < \delta_4(\cH) \le \delta_5(\cH) \le \delta_6(\cH) < \delta_7(\cH)\,.
\end{equation}
More precisely,
\begin{enumerate}
  \item A first eigenfunction $u_1$ of $(\cH,\mf{d})$ arises from a first eigenfunction of $(\cR,\mf{nnd})$. It also arises from a first eigenfunction of $(\cT_h,\mf{nnd})$.
  \item The eigenspace $\cE(\delta_2)$ has dimension $2$. It is generated by an eigenfunction $u_2$ arising from a first eigenfunction of $(\cP,\mf{d})$, and by an eigenfunction $u_3$ arising from a first eigenfunction of $(\cQ,\mf{d})$. These eigenfunctions also arise from first eigenfunctions of $(\cR,\mf{dnd})$ and $(\cR,\mf{ndd})$ respectively.
  \item The sum $\cE(\delta_4(\cH))\oplus \cE(\delta_5(\cH))\oplus \cE(\delta_6(\cH))$ has dimension $3$. It is generated by eigenfunctions $\{u,v,w\}$, where $u$ arises from a first eigenfunction of $(\cR,\mf{ddd})$, $v = T(u)$, and $w$ arises from a second eigenfunction of $(\cT_h,\mf{nnd})$. The nodal set of $w$ is a closed simple curve around the center of the hexagon.
\end{enumerate}
\end{proposition}%

\proof  We use the ideas of Subsection~\ref{SS-refpr}.

\emph{Assertion~1.~} The first Dirichlet eigenvalue is simple, and an associated eigenfunction $u_1$ does not change sign. A first eigenfunction must be invariant under all the symmetries $D_i, M_j$. This implies that $u_1$ arises from a first eigenfunction of $(\cR,\mf{nnd})$, and from a first eigenfunction of $(\cT_h,\mf{nnd})$.

\emph{Assertion~2.~} Let $\psi$ be a first eigenfunction of $(\cQ,\mf{d})$. It does not change sign in $\cQ$, and must be invariant with respect to $M_2$. This means that it arises from a first eigenfunction of $(\cR,\mf{ndd})$. Extend $\psi$ to $u_3$ on $\cH$, so that it is anti-invariant under $D_1$. The function  $u_3$ is an eigenfunction of $(\cH,\mf{d})$. It is associated with $\delta_1(\cQ)$, belongs to $\cS_{- , +}$, and its nodal set is $D_1 \cap \cH$, so that $u_3 \not \in \cS^0_{- , +}$. Similarly, let $\theta$ be a first of $(\cP,\mf{d})$. It does not vanish in $\cP$, and is invariant with respect to $D_1$. It arises from a first eigenfunction of $(\cR,\mf{dnd})$, and can be extended to $u_2$ on $\cH$, an eigenfunction of $(\cH,\mf{d})$, associated with $\delta_1(\cP)$,  belonging to $\cS_{+ , -}$, and whose nodal set is $M_2 \cap \cH$, so that $u_2 \not \in \cS^0_{+ , -}$. Applying Lemma~\ref{L-hsym-6}, and \eqref{E-DirLU-T2aa}, we conclude that we can choose $u_3 = T(u_2)$, and hence that
\begin{equation}\label{E-hDir-10}
\delta_2(\cH) = \delta_3(\cH) = \delta_1(\cP) = \delta_1(\cQ).
\end{equation}

\emph{Assertion~3.~} We reason as in the proof of Assertion~2. From a first eigenfunction $\phi$ of $(\cR,\mf{d})$, we obtain an eigenfunction $u$ of $(\cH,\mf{d})$, associated with $\delta_1(\cR)$, belonging to $\cS_{-,-}\,$, whose nodal set is $(D_1 \cup M_2) \cap \cH\,$. Then $u$  does not belong to $\cS^{0}_{-,-}\,$.  Applying Lemma~\ref{L-hsym-6}, and \eqref{E-DirLU-T2aa}, we can choose $v=T(u)$. Using \eqref{E-DirLU-T2aa}, more precisely the fact that $\mu_2(\cT_h,\mf{nnd}) \in \spc(\cH,\mf{d})$, we now choose $w$ to arise from a second eigenfunction $\xi$ of $(\cT_h,\mf{nnd})$.\medskip

Because $w$ is a Dirichlet eigenfunction of the convex set $\cH$, the nodal set of $w$ has the properties described in \cite{Al1994}. The function $\xi$ has two nodal domains, and its nodal set must be a single simple line which is either closed inside $\cT_h$, or goes from one side to another side (including the possibility to start or arrive at a vertex). Looking at all the possible configurations, we see that the function $w$ would have at least seven nodal domains (this is prohibited by Courant's theorem), except in one case, when the nodal set of $\xi$ is a curve from the open side of $\cT_h$ labelled $1$, to the open side labelled $2$. In this case, the function $w$ has a closed nodal line and two nodal domains.\medskip

Note: We know that $\dim \left( \cE(\delta_4) \oplus \cE(\delta_5) \oplus \cE(\delta_6) \right) = 3$. According to Remark~\ref{R-hsym-2}, this implies that
$\left( \cE(\delta_4) \oplus \cE(\delta_5) \oplus \cE(\delta_6) \right) \cap \cS^0 \not = \{0\}$.\medskip

The proposition is proved. \hfill \qed \medskip

\begin{remark}\label{R-hex-dir1}
We can determine which eigenvalues among the first four eigenvalues of $(\cR,\mf{abd})$, $\mf{a,b} \in \{\mf{d,n}\}$, might possibly be $\delta_4(\cH)$. Table~\ref{T-DirNum-0bp} takes Remark~\ref{R-DirNum-2} and Assertions~1 and 2 into account.  The word ``no''  in a cell means that the corresponding eigenvalue $\mu_i(\cR,\mf{abd})$ cannot be equal to $\delta_4(\cH)$ due to the known inequalities on these eigenvalues. The only remaining possibilities are $\delta_4(\cH)  = \mu_2(\cR,\mf{nnd})$ (which might be a multiple eigenvalue), and $\delta_4(\cH) = \mu_1(\cR,\mf{ddd})$.
\end{remark}%

\begin{table}[!htb]
\caption{Possible choices for $\delta_4(\cH)$}\label{T-DirNum-0bp}
\centering
\begin{tabular}[c]{|c|c|c|c|c|c|c|c|}%
\hline
$(\cR,\mf{abd})$ & $\mu_1$ & $<$ & $\mu_2$ & $\le$ & $\mu_3$& $\le$ & $\mu_4$\\
\hline
$\mf{nnd}$ & $\delta_1(\cH)$ & $<$ &  & $\le$ &  & $\le$ &\\
\hline
    & \rotatebox{90}{$\cfr >$} && \rotatebox{90}{$\cfr >$} && \rotatebox{90}{$\cfr >$} && \rotatebox{90}{$\cfr >$}\\
\hline
$\mf{dnd}$ & $\delta_2(\cH)=\delta_3(\cH)$ & $<$ & no & $\le$ & no & $\le$ & no\\
%\hline
%& ? && ? && ? && ? \\
\hline
$\mf{ndd}$ & $\delta_2(\cH)=\delta_3(\cH)$ & $<$ & no & $\le$ & no & $\le$ & no\\
\hline
    & \rotatebox{90}{$\cfr >$} && \rotatebox{90}{$\cfr >$} && \rotatebox{90}{$\cfr >$} && \rotatebox{90}{$\cfr >$}\\
\hline
$\mf{ddd}$ &  & $<$  & no & $\le$ & no & $\le$ & no\\
\hline
\end{tabular}%
\end{table}%

\subsection{Numerical results and $\ecp(\cH,\mf{d})$ }\label{SS-ch-ecp-d}

Using the numerical approximations given in Table~\ref{T-DirNum-0b}, we infer the (numerical) lower bound $ \delta_6(\cH) > 35.17 $. This implies that $\delta_6(\cH)$ is simple. It follows that $u_{6}$ arises from the second eigenfunction of $\cT_h$, with mixed boundary condition $\mf{nnd}$ (Dirichlet on the smaller side of $\cT_h$, Neumann on the other sides). This provides the following numerical extension of Proposition~\ref{P-hDir-2},

\begin{state}\label{P-hDir-2e}
The Dirichlet eigenvalues of $\cH$ satisfy,
\begin{equation}\label{E-hDir-P2b}
\delta_1(\cH) < \delta_2(\cH) = \delta_3(\cH) < \delta_4(\cH) = \delta_5(\cH) < \delta_6(\cH) < \delta_7(\cH)\,,
\end{equation}
and
\begin{equation}\label{E-hDir-P2c}
\delta_4(\cH) = \delta_5(\cH) = \delta_1(\cR)\,,
\end{equation}
The eigenspace $\cE(\delta_4)$ has dimension $2$, and is generated by an eigenfunction $u_4$ which arises from the first eigenfunction of $(\cR,\mf{ddd})$ and the function $u_5=T(u_4)$. The eigenfunction $u_6$ associated with $\delta_6(\cH)$ arises from the second eigenfunction of $(\cT_h,\mf{nnd})$, and its nodal set is a simple closed curve enclosing the center of the hexagon.
\end{state}%

Figure~\ref{F-Hx} displays the nodal patterns of first six Dirichlet eigenfunctions of $\cH$. %% (see the figure in \cite[p. 512]{BauRei1978})

\begin{figure}[!htb]
  \centering
  \includegraphics[width=7cm]{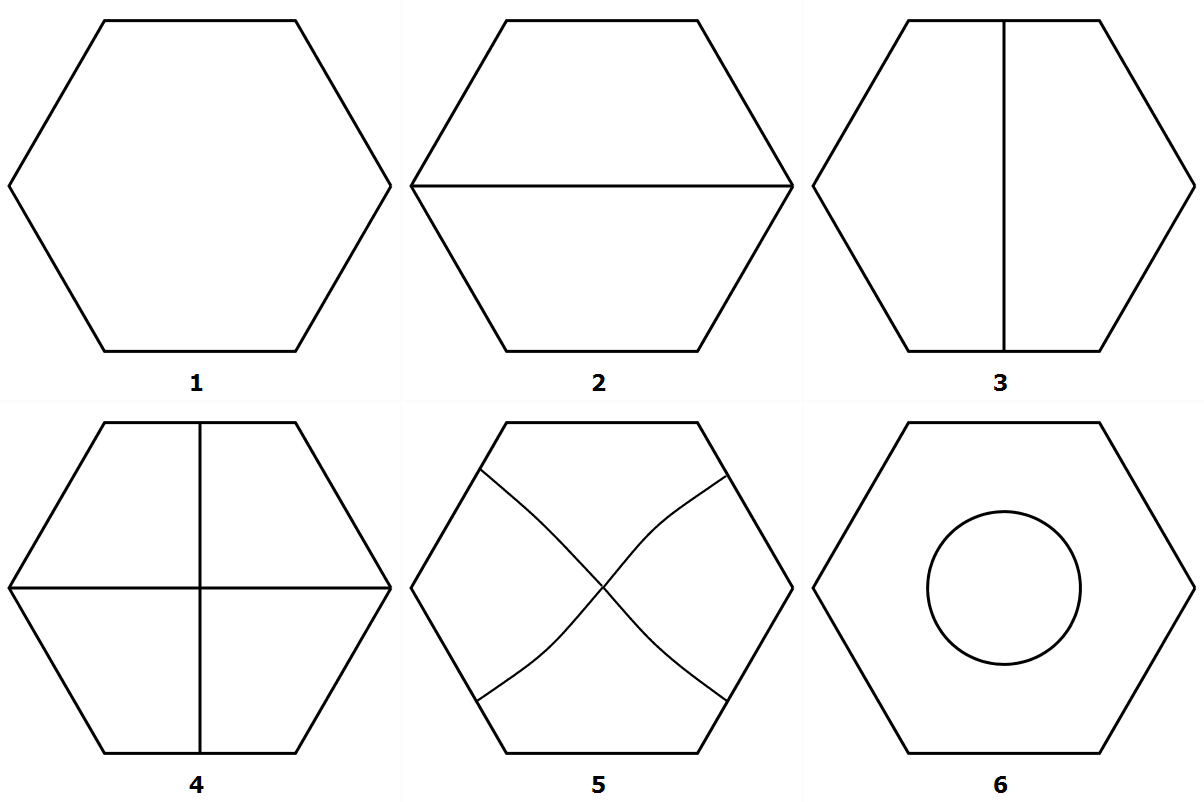}
 %% ORI: hexagon-D-BauerReiss1978.png
  \caption{$(\cH,\mf{d})$: nodal structure for the first  six eigenfunctions}\label{F-Hx}
\end{figure}

\FloatBarrier

Plotting the nodal set of the linear combination $u_{6} + a \, u_{1}$ for several values of $a$, one finds some values of $a$ for which this function has $7$ nodal domains, see Figure~~\ref{F-hexD-noecp}.

\begin{figure}[!htb]
  \centering
  \includegraphics[width=8cm]{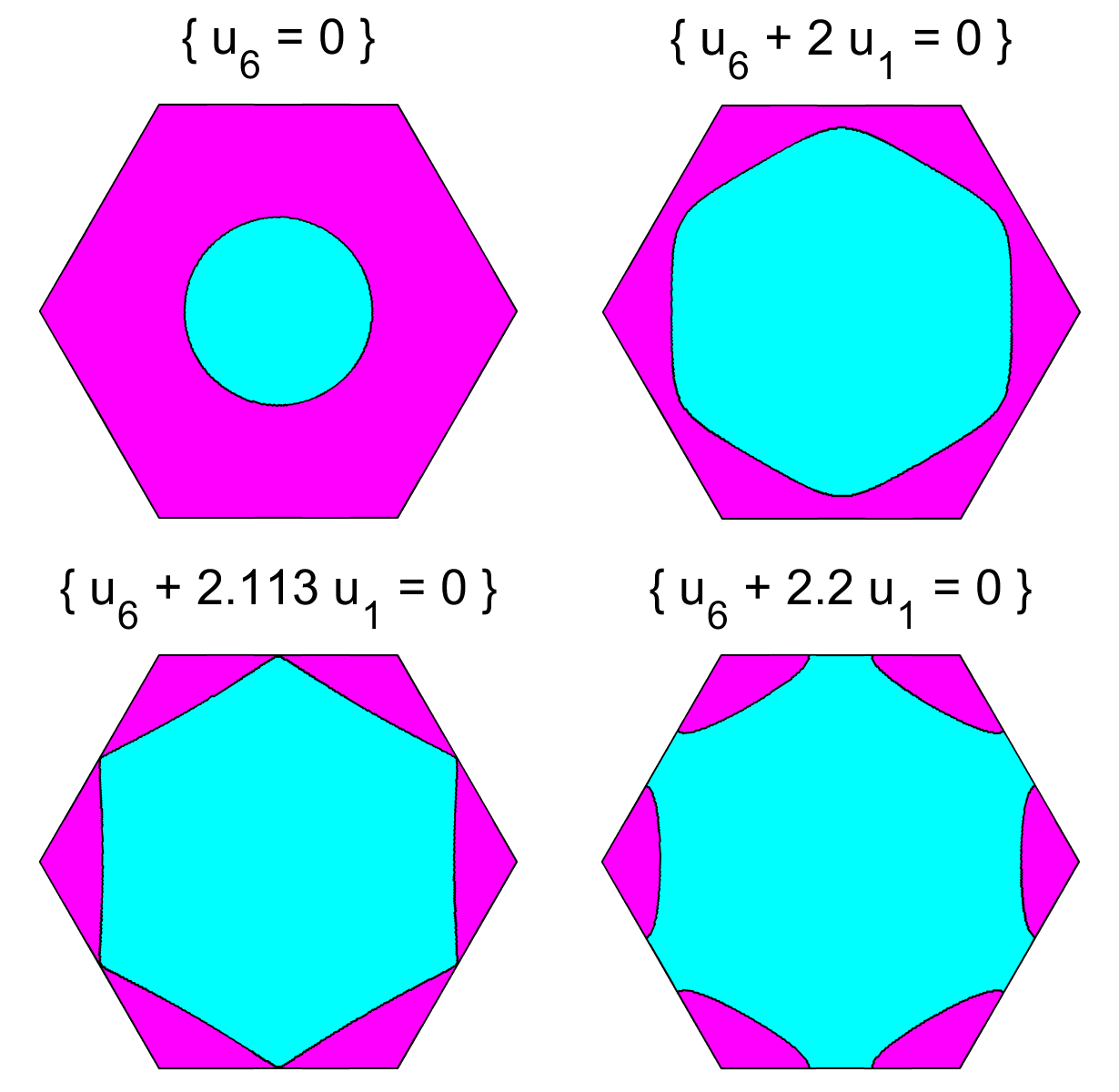}
  \caption{$(\cH,\mf{d})$: the $\ecp$ is false in $\cE(\delta_1)\oplus \cE(\delta_6)$}\label{F-hexD-noecp}
\end{figure}

\begin{state}\label{Cl-num-4d}
Figure~\ref{F-hexD-noecp} provides a numerical evidence that the $\ecp(\cH,\mf{d})$ is false.
\end{state}%

% \begin{remark}\label{R-num-4dr}
% Figure~\ref{F-hexD-noecp} also suggests that $\ecp(\cT_e,\mf{nnd})$
% is false (equilateral triangle with the Dirichlet boundary condition on one
% side, and the Neumann boundary condition on the other sides). This
% should be compared with the results in Subsection~\ref{SS-rhe-n},
% in particular Figure~\ref{F-Th-nnd}.
% \end{remark}%

\begin{remark}\label{R-num-5dr}
For Statement~\ref{Cl-num-4d}, we do not really need to separate $\delta_6(\cH)$ from $\delta_5(\cH)$. It suffices to use Proposition~\ref{P-hDir-2}, and more precisely the fact that there exists an eigenfunction in $\cE(\delta_4) \oplus \cE(\delta_5) \oplus \cE(\delta_6)$, which arises from a second eigenfunction of $(\cT_h,\mf{nnd})$. As in Subsection~\ref{SS-rhe-n},  we then need to know the nodal patterns in $\cE\left( \mu_1(\cT_h,\mf{nnd})\right) \oplus \cE\left( \mu_2(\cT_h,\mf{nnd}) \right)$, or equivalently the nodal patterns in $\cE\left( \mu_1(\cT_e,\mf{nnd})\right) \oplus \cE\left( \mu_2(\cT_e,\mf{nnd}) \right)$, see Remark~\ref{R-rhe-ecp2s} and Figure~\ref{F-Th-nnd}.
\end{remark}%

%\FloatBarrier

\subsection{Identification of the first Neumann eigenvalues of the regular hexagon}\label{SS-hNeu}

\subsubsection{Numerical computations and preliminary remarks}\label{SS-hNeu-1}

We did not find numerical computations of the Neumann eigenvalues of the hexa\-gon in the literature. We use the same method as in Subsection~\ref{SS-hDir}.\medskip

Given an eigenspace $\cE(\lambda)$ of $-\Delta$ for $(\cH,\mf{n})$, we apply Lemma~\ref{L-hsym-6}, and write
\begin{equation}\label{E-hNeu-4}
\cE(\lambda) = \pbigoplus_{\sigma,\tau \in \{+,-\}} \big( \cE(\lambda) \cap \cS^{0}_{\sigma,\tau}\big) \poplus \big( \cE(\lambda) \cap \cS^{1}_{\sigma,\tau}\big).
\end{equation}

This means that to determine the eigenvalues of $(\cH,\mf{n})$, it suffices to list the eigenvalues of $(\cR,\mf{abn})$, with $\mf{a,b} \in \{\mf{d,n}\}$, and to re-order them in non-decreasing order.\medskip

Table~\ref{T-hNeu-2} displays the approximate values of the first four eigenvalues of $(\cR,\mf{abn})$, as calculated by \textsc{matlab}.

\begin{table}[!htb]
\caption{First four eigenvalues for $(\cR, \mf{abn})$, $\mf{a, b} \in \{\mf{d,n}\}$}\label{T-hNeu-2}
\centering
\begin{tabular}[c]{|c|c|c|c|c|c|c|c|}%
\hline
$(\cR,\mf{abd})$ & $\mu_1$ & $<$ & $\mu_2$ & $\le$ & $\mu_3$& $\le$ & $\mu_4$\\
\hline
$\mf{nnn}$ & $0$ & $<$ & $10.87$ & $\le$ & $17.55$ & $\le$ & $33.45$\\
\hline
    & \rotatebox{90}{$\cfr >$} && \rotatebox{90}{$\cfr >$} && \rotatebox{90}{$\cfr >$} && \rotatebox{90}{$\cfr >$}\\
\hline
$\mf{dnn}$ & $4.04$ & $<$ & $17.55$ & $\le$ & $32.91$ & $\le$ & $49.90$\\
\hline
& ? && ? && ? && ? \\
\hline
$\mf{ndn}$ & $4.04$ & $<$ & $24.90$ & $\le$ & $32.91$ & $\le$ & $49.90$\\
\hline
    & \rotatebox{90}{$\cfr >$} && \rotatebox{90}{$\cfr >$} && \rotatebox{90}{$\cfr >$} && \rotatebox{90}{$\cfr >$}\\
\hline
$\mf{ddn}$ & $10.87$ & $<$  & $33.45$ & $\le$ & $54.77$ & $\le$ & $71.71$\\
\hline
\end{tabular}%
\end{table}%
%\vspace{-5mm}

\begin{remark}\label{R-hNeu-E2}
The following inequalities follow from Proposition~\ref{P-use-2}, see \cite{LoRo2017},
\begin{equation}\label{E-hNeu-2a}
\left\{
\begin{array}{l}
\mu_i(\cR, \mf{nnn}) < \mu_i(\cR, \mf{dnn})  < \mu_i(\cR, \mf{ddn})\,, \\[5pt]
\mu_i(\cR, \mf{nnn}) < \mu_i(\cR, \mf{ndn})  < \mu_i(\cR, \mf{ddn})\,.
\end{array}%
\right.
\end{equation}
\end{remark}%
These inequalities are indicated in Table~\ref{T-hNeu-2} by the (rotated) strict inequality signs. The question marks indicate that one cannot compare the other values.\medskip

 Eigenfunctions in $\cE(\lambda) \cap \cS^{0}_{\sigma,\tau}$ correspond to eigenfunctions of $-\Delta$ for $(\cT_h,\mf{abn})$ with $\mf{a}=\mf{d}$ (resp. $\mf{a}=\mf{n}$) if $\tau = -$ (resp. $\tau = +$), and similarly for $\mf{b}$, with $\sigma$. Table~\ref{T-hNeu-6} displays the first non trivial eigenvalue of $(\cT_h,\mf{abn})$.

\begin{table}[!htb]
\caption{Least non trivial eigenvalues for the hemiequilateral triangle}\label{T-hNeu-6}
\centering
\begin{tabular}[c]{|c|c|c|c|}%
\hline
$\cS$ & $(\cT_h,\mf{abn})$ & Eigenvalue & Value $\vsp$\\[5pt]
\hline
$\cS^0_{+,+}$ & $(\cT_h,\mf{nnn})$ & $\mu_2$ & $\mu_2 = \frac{16\pi^2}{9}\approx 17.55 \vsp$\\[5pt]
\hline
$\cS^0_{+,-}$ & $(\cT_h,\mf{ndn})$ & $\mu_1$ & $\mu_1=\frac{16\pi^2}{9} \approx 17.55 \vsp$\\[5pt]
\hline
$\cS^0_{-,+}$ & $(\cT_h,\mf{dnn})$ & $\mu_1$ & $\mu_1 > \frac{4\pi^2}{\sqrt{3}} > 22.79 \vsp$\\[5pt]
\hline
$\cS^0_{-,-}$ & $(\cT_h,\mf{ddn})$ & $\mu_1$ & $\mu_1 > \frac{16\pi^2}{3} > 52.64 \vsp$\\[5pt]
\hline
\end{tabular}%
\end{table}%

\begin{remarks}\label{R-hNeu-E6}
(1)~The first eigenvalue $\mu_1(\cT_h,\mf{nnn})$ is $0$. The second eigenvalue $\mu_2(\cT_h, \mf{nnn})$ is also the second eigenvalue of an equilateral triangle with Neumann boundary condition. The corresponding eigenfunction has a nodal line which is a curve from side $1$ to side $2$ of $\cT_h$.\\
(2)~In the third line of Table~\ref{T-hNeu-6}, we use the fact that $\mu_1(\cT_h,\mf{dnn})$ is the first Dirichlet eigenvalue of an equilateral rhombus. It is bounded from below by the first Dirichlet eigenvalue of a square with the same area (P\'{o}lya, see \cite{LaSi2017}).\\
(3)~In the fourth line of Table~\ref{T-hNeu-6}, we use the fact that $\mu_1(\cT_h,\mf{ddn})$ is the first Dirichlet eigenvalue of an isosceles triangle with sides $(1,1,\sqrt{3})$. It is bounded from below by the first Dirichlet eigenvalue of the equilateral triangle with the same area (P\'{o}lya, see \cite{LaSi2017}). Note that $\mu_1(\cT_h,\mf{ddn}) > \mu_1(\cT_h,\mf{dnn})$ according to Proposition~\ref{P-use-2}.
\end{remarks}%

One can also compute the eigenvalues of $(\cH,\mf{n})$ directly, without taking the symmetries into account. The first Neumann eigenvalues of the hexagon are given in Table~\ref{T-num-2n}.

\begin{table}[!htb]
\caption{First non-trivial Neumann eigenvalues of $\cH$}\label{T-num-2n}
\centering
\begin{tabular}{|c|c|c|}
\hline
Eigenvalue of $\cH$ & Approximation & Eigenvalue of $\cR$\\[5pt]
\hline
$\nu_2(\cH)$ & $\approx 4.04$ & $\mu_1(\cR,\mf{dnn})$\\[5pt]
\hline
$\nu_3(\cH)$ & $\approx 4.04$ & $\mu_1(\cR,\mf{ndn})$\\[5pt]
\hline
$\nu_4(\cH)$ & $\approx 10.87$ & $\mu_1(\cR,\mf{ddn})$\\[5pt]
\hline
$\nu_5(\cH)$ & $\approx 10.87$ & $\mu_2(\cR,\mf{nnn})$\\[5pt]
\hline
$\nu_6(\cH)$ & $\approx 17.55$ & $\mu_2(\cR,\mf{dnn})$\\[5pt]
\hline
$\nu_7(\cH)$ & $\approx 17.55$ & $\mu_3(\cR,\mf{nnn})$\\[5pt]
\hline
$\nu_8(\cH)$ & $\approx 24.90$ & $\mu_2(\cR,\mf{ndn})$\\[5pt]
\hline
\end{tabular}
\end{table}

\FloatBarrier

Figure~\ref{F-hexN} displays the nodal patterns of eigenfunctions associated with the eigenvalues $\nu_i(\cH), 2\le i \le 7$. \medskip

\begin{figure}[!htb]
  \centering
  \includegraphics[width=11cm]{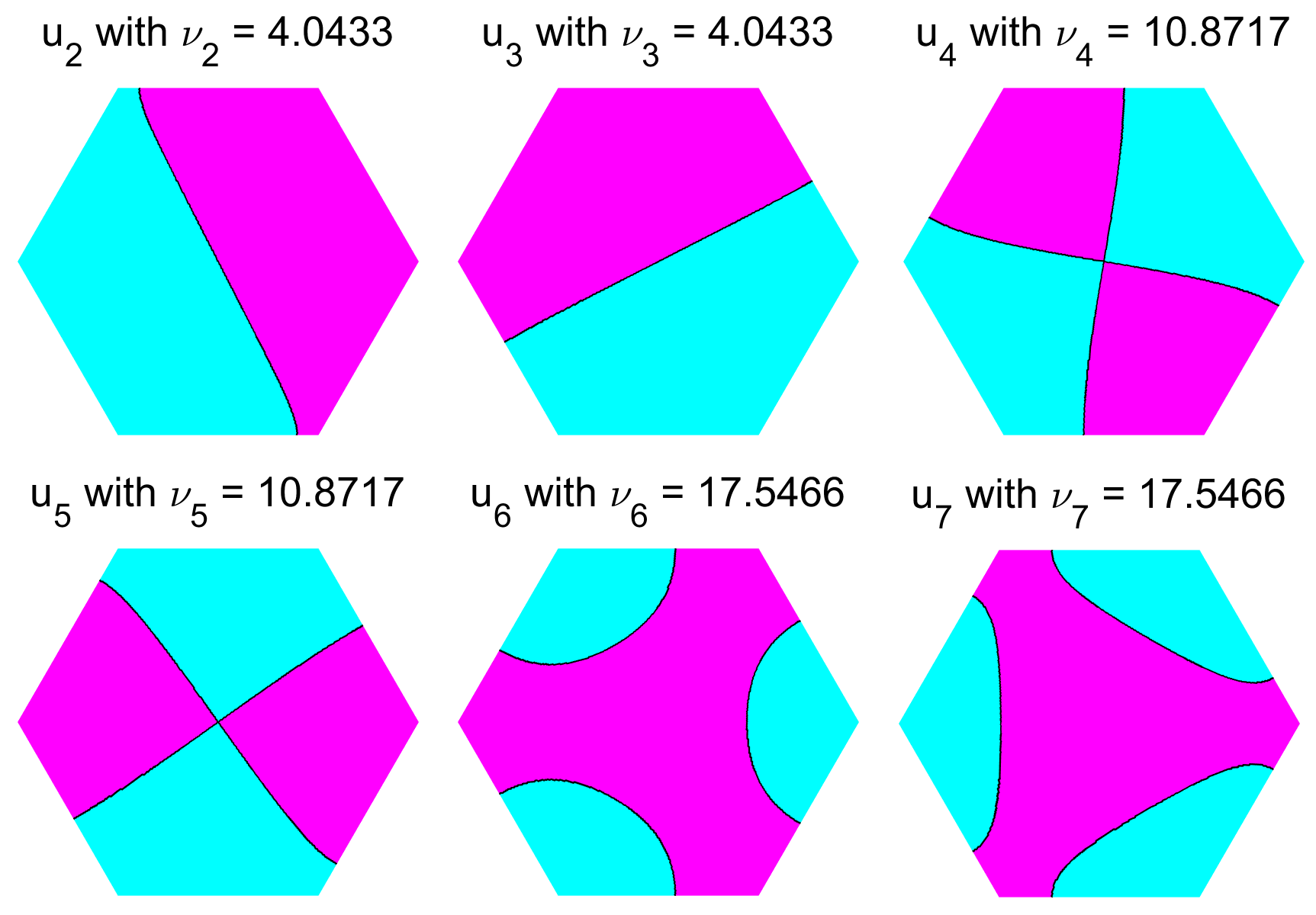}
  %%hex-neumann-8T1.pdf
  \caption{$(\cH,\mf{n})$: nodal patterns $u_2$ -- $u_7$}\label{F-hexN}
\end{figure}

\begin{remark}\label{R-hNeu-E6b}
The figures in Table~\ref{T-hNeu-2} suggest that the Neumann eigenvalues of the hexagon come into well separated sets:
\begin{equation*}
\left\{
\begin{array}{rl}
\nu_1(\cH) & = 0\,,\\[5pt]
\{\nu_2(\cH),\nu_3(\cH)\} &\subset\, ]3,5[\,,\\[5pt]
\{\nu_4(\cH),\nu_5(\cH)\} & \subset\, ]6,14[\,,\\[5pt]
\{\nu_6(\cH),\nu_7(\cH)\} & \subset\, ]15,20[\,,\\[5pt]
\nu_8(\cH) & > 21\,.
\end{array}%
\right.
\end{equation*}
\end{remark}

In the following subsections, we analyze the possible eigenspaces and, more precisely, the double eigenvalues. Note that for Neumann eigenvalues we do not have monotonicity inequalities as the ones we used for Dirichlet eigenvalues in Subsection~\ref{SS-DirLU}, so that we have to rely on the numerical evidence provided by Remark~\ref{R-hNeu-E6b}.

\subsubsection{Analysis of the possible eigenspaces of $(\cH,\mf{n})$}\label{SS-hNeu-2}

We divide the analysis into several steps.\medskip

\textbf{Step~1: eigenvalue $\nu_1(\cH)$}.~
The first Neumann eigenvalue is zero, and simple, with a corresponding eigenfunction $u_1$ which is constant. We have $u_1 \in \cS^0_{+,+}$, and $\nu_1(\cH) = \mu_1(\cT_h,\mf{nnn})$.\medskip

\textbf{Step~2: eigenvalue $\nu_2(\cH)$}.~
Let $\cE_2 = \cE\left( \nu_2(\cH) \right)$ be the corresponding eigen\-space. \smallskip

\noid~ We claim that
\begin{equation}\label{E-hNeu-Claim2a}
\cE_2 \cap \cS^0 = \{0\}\,.
\end{equation}

Indeed, Courant's nodal domain theorem and Lemma~\ref{L-hsym-6} imply that $\cE_2 \cap \cS^0_{\sigma,\tau} = \{0\}$ unless $(\sigma,\tau) = (+,+)$. Assume that there exists some $0 \not = \phi \in \cE_2 \cap \cS^0_{+,+}$. The restriction of $\phi$ to $\cT_h$ would be an eigenfunction of $-\Delta$ for $(\cT_h,\mf{nnn})$. Because $\nu_2(\cH)$ is the least non zero eigenvalue, we would have $\nu_2(\cH) = \mu_2(\cT_h,\mf{nnn})$, whose eigenfunction is known, with nodal set an arc from the side $1$ to the side $2$. The function $\phi$ would have a closed nodal line bounding a nodal domain strictly contained in the interior of $\cH$, and we would have $\nu_2(\cH) > \delta_1(\cH)$, contradicting the fact that $\nu_3(\cH) \le \delta_1(\cH)$ according to \cite[Theorem~4.2]{LeWe1986}.\smallskip

As a by-product of \eqref{E-hNeu-Claim2a}, Lemma~\ref{L-hsym-6} tells us that the map $T$, defined by \eqref{E-hsym-L4a}, is a bijection from $\cE_2$ to $\cE_2$.\smallskip

%{\cfr Note that \eqref{E-hNeu-Claim2a} also implies the inequalities,
%%
%\begin{equation}\label{E-hNeu-12a}
%\left\{
%\begin{array}{ll}
%\nu_2(\cH) & < \mu_2(\cT_h,\mf{nnn})\,,\\[5pt]
%\nu_2(\cH) & < \mu_1(\cT_h,\mf{dnn}) < \mu_1(\cT_h,\mf{ddn})\,,\\[5pt]
%\nu_2(\cH) & < \mu_1(\cT_h,\mf{ndn}) < \mu_1(\cT_h,\mf{ddn})\,.
%\end{array}
%\right.
%\end{equation}
%}

\noid~ We claim that
\begin{equation}\label{E-hNeu-Claim2b}
\cE_2 \cap \cS^1_{-,-} = \{0\} \text{~~and~~}
\cE_2 \cap \cS^1_{+,+} = \{0\}\,.
\end{equation}

The first assertion is clear by Courant's theorem. The second assertion follows from the fact that the map $T$ is a bijection from $\cS^1_{+,+}$ onto $\cS^1_{-,-}$ which commutes with $\Delta$.\medskip

\noid~ We claim that
\begin{equation}\label{E-hNeu-Claim2ba}
\dim \left( \cE_2 \cap \cS^1_{+,-} \right) = \dim \left( \cE_2 \cap \cS^1_{-,+}\right) = 1\,,
\end{equation}
and hence that $\mathrm{mult}\left(\nu_2(\cH)\right) = 2\,$. \smallskip

Indeed, using the map $T$ again, we see that the spaces $\cE_2 \cap \cS^1_{+,-}$ and $\cE_2 \cap \cS^1_{-,+}$ have the same dimension. According to \cite{HOMN1999}, see the statement p.~1170, line~(-8), the multiplicity of $\nu_2(\cH)$ is less than or equal to $3$, and we can conclude that this dimension must be $1$.  Here is an alternative argument for the case at hand.
It suffices to prove that the dimension of $\cE_2$ cannot be larger than or equal to $4$. Indeed, assume that $\dim \cE_2 \ge 4$. One could then find a point $x_0 \in \cH$, and an eigenfunction $u_4 \in \cE_2$ such that $u_4(x_0) \not = 0$. The subspace $\cE_{2,x_0} = \{u \in \cE_2 ~|~ u(x_0)=0\}$ would have dimension $3$, with a basis $u_1, u_2, u_3$. The three vectors $\nabla u_1(x_0),\nabla u_2(x_0),\nabla u_3(x_0) \in \R^2$ would be linearly dependent, and we would then find a nontrivial $u \in \cE_2$ such that $u(x_0) = \nabla u(x_0) = 0$. The nodal set of $u$ would contain at least four semi-arcs emanating from $x_0$, and we would reach a contradiction with the fact that $u$ has only two nodal domains by Courant's theorem.
%

%\noid~ We conclude that
%%
%\begin{equation}\label{E-hNeu-Claim2c}
%\left\{
%\begin{array}{l}
%\dim \cE(\nu_2(\cH)) = 2\,,\\[5pt]
%\nu_2(\cH) = \mu_1(\cR,\mf{ndd}) = \mu_1(\cR,\mf{dnn}) = \nu_3(\cH)\,,\\[5pt]
%\nu_3(\cH) < \mu_1(\cR,\mf{ddn})\,,\\[5pt]
%\nu_3(\cH) < \mu_2(\cR,\mf{nnn})\,.
%\end{array}
%\right.
%\end{equation}

\smallskip
Because $\mu_2(\cR,\mf{nnn})$ is an eigenvalue of $(\cH,\mf{n})$,  we have proved the following lemma.

\begin{lemma}\label{L-hNeu-2} The eigenvalue $\nu_2(\cH)$ has multiplicity $2$,
\begin{equation}\label{E-hNeu-22}
\nu_2(\cH) = \mu_1(\cR,\mf{dnn}) = \mu_1(\cR,\mf{ndn})\,,
\end{equation}
and corresponding eigenfunctions $u_2, u_3$ arise from the first eigenfunctions of $-\Delta$ for $(\cR,\mf{dnn})$ and $(\cR,\mf{ndn})$. Furthermore, $$\nu_2(\cH) = \nu_3(\cH) < \mu_2(\cR,\mf{nnn})\,.$$
\end{lemma}%

\textbf{Step~3: eigenvalue $\nu_4(\cH)$}.~
Let $\cE_4 = \cE\left( \nu_4(\cH) \right)$ be the ei\-gen\-space associated with the eigenvalue $\nu_2(\cH)$.\smallskip

\noid~ We claim that
\begin{equation}\label{E-hNeu-Claim3a}
\cE_4 \cap \cS^0 = \{0\}\,.
\end{equation}

Indeed, by Courant's theorem and Lemma~\ref{L-hsym-6},
\begin{equation}\label{E-hNeu-32}
\cE_4 \cap \cS^0_{\sigma,\tau} = \{0\},
\end{equation}
unless $(\sigma,\tau) = (+,+)$. Assume that there exists some $0 \not = \phi \in \cS^0_{+,+}$. Then, we would have $\nu_4(\cH) = \mu_2(\cT_h,\mf{nnn}) = \frac{16\pi^2}{9}$. Observe that $\nu_2(\cT_e) = \mu_2(\cT_h,\mf{nnn}) = \mu_1(\cT_h,\mf{ndn})$. This means that $\cE_4$ would also contain a function in $\cS^0_{+,-}$ having $6$ nodal domains which would contradict Courant's theorem. \medskip

From \eqref{E-hNeu-Claim3a} and Proposition~\ref{P-use-4} (\cite[Theorem~1.1]{Siu2016}), we deduce that
\begin{equation}\label{E-hNeu-Claim3aa}
\nu_4 < \mu_2(\cT_h,\mf{nnn}) = \mu_1(\cT_h,\mf{ndn}) < \mu_1(\cT_h,\mf{dnn}) < \mu_1(\cT_h,\mf{ddn})\,.
\end{equation}

\noid~ We claim that
\begin{equation}\label{E-hNeu-34}
\cE_4 \cap \cS^1_{+,-} = \{0\} \text{~and~} \cE_4 \cap \cS^1_{-,+} = \{0\}\,.
\end{equation}

Indeed, assume that $\cE_4 \cap \cS^1_{+,-} \not = \{0\}$ or, equivalently using the map $T$, that $\cE_4 \cap \cS^1_{-,+} \not = \{0\}$. Then, we would have
\begin{equation}\label{E-hNeu-34a}
\nu_4 = \mu_2(\cR,\mf{ndn}) = \mu_2(\cR,\mf{dnn})\,.
\end{equation}

These eigenvalues are strictly larger than $\mu_2(\cR,\mf{nnn})$ by \eqref{E-hNeu-2a}, and this would contradict the fact that $\nu_3(\cH) < \mu_2(\cR,\mf{nnn})$, see Step~2, because $\mu_2(\cR,\mf{nnn})$ is an eigenvalue for $(\cH,\mf{n})$.\smallskip

As a by-product, we have the inequalities
\begin{equation}\label{E-hNeu-34b}
\left\{
\begin{array}{l}
\nu_4 < \mu_2(\cR,\mf{dnn}) < \mu_2(\cR,\mf{ddn})\,,\\[5pt]
\nu_4 < \mu_2(\cR,\mf{ndn}) < \mu_2(\cR,\mf{ddn})\,.
\end{array}
\right.
\end{equation}

\noid~ It follows from the above arguments that we must have,
\begin{equation}\label{E-hNeu-38}
\dim\left( E_4 \cap \cS^1_{-,-}\right) = \dim\left( E_4 \cap \cS^1_{+,+}\right) > 0\,,
\end{equation}
and hence that $\dim \cE_4 \ge 2$, so that  $\nu_4(\cH) = \nu_5(\cH)=\mu_1(\cR,\mf{ddn}) = \mu_2(\cR,\mf{nnn})$, with corresponding eigenfunction $u_4, u_5$ for $(\cH,\mf{n})$.\medskip

\begin{remark}\label{R-hNeu38a}
According to Table~\ref{T-hNeu-2} and Remark~\ref{R-hNeu-E6b}, $\dim \cE_4 = 2$.
\end{remark}%

\textbf{Step~4: eigenvalue $\nu_6(\cH)$}.~
So far, we have established the following facts
%
%\begin{equation*}\label{E-hNeu-42}
%\left\{
%\begin{array}{ccccc}
%\mu_1(\cR,nnn) & < & \mu_1(\cR,dnn) = \mu_1(\cR,ndn) & < & \mu_2(\cR,nnn) = \mu_1(\cR,ddn)\,, \\[5pt]
%\nu_1(\cH) & < & \nu_2(\cH) = \nu_3(\cH) & < & \nu_4(\cH) = \nu_5(\cH) \,.
%\end{array}%
%\right.
%\end{equation*}
%
\begin{equation*}\label{E-hNeu-42}
%\left\{
\begin{array}{c}
\nu_1(\cH) < \nu_2(\cH) = \nu_3(\cH) < \nu_4(\cH) = \nu_5(\cH)\le \cdots \\[5pt] \text{~or}\\[5pt]
\mu_1(\cR,\mf{nnn}) < \mu_1(\cR,\mf{dnn}) = \mu_1(\cR,\mf{ndn}) < \mu_2(\cR,\mf{nnn}) = \mu_1(\cR,\mf{ddn})\,.
\end{array}%
\end{equation*}

The next eigenvalue $\nu_6(\cH)$ should belong to the set,
\begin{equation}\label{E-hNeu-44}
\left\lbrace \mu_2(\cR,\mf{dnn}), \mu_2(\cR,\mf{ndn}), \mu_2(\cR,\mf{ddn}), \mu_3(\cR,\mf{nnn})\right\rbrace
\end{equation}
We can exclude $\mu_2(\cR,\mf{ddn})$ because it is larger than both $\mu_2(\cR,\mf{dnn})$ and $\mu_2(\cR,\mf{ndn})$ according to \eqref{E-hNeu-2a} (\cite[Proposition~2.3]{LoRo2017}).\medskip

%From \cite[Proposition~2.3]{LoRo2017}, we have the following strict inequalities,
%%
%\begin{equation}\label{E-hNeu-44a}
%\left\{
%\begin{array}{l}
%\mu_3(\cR,\mf{nnn}) < \mu_3(\cR,\mf{ndn})  <  \mu_3(\cR,\mf{ddn}) \,,\\[5pt]
%\mu_3(\cR,\mf{nnn}) < \mu_3(\cR,\mf{dnn})  <  \mu_3(\cR,\mf{ddn}) \,,\\[5pt]
%\mu_2(\cR,\mf{ndn}) <  \mu_2(\cR,\mf{ddn}) \,,\\[5pt]
%\mu_2(\cR,\mf{dnn}) <  \mu_2(\cR,\mf{ddn}) \,,
%\end{array}
%\right.
%\end{equation}
%
%so that the set of candidates to be $\nu_6$ can be restricted to
%%
%\begin{equation}\label{E-hNeu-44b}
%\left\lbrace  \mu_2(\cR,\mf{ndn})\,, \mu_2(\cR,\mf{dnn}) \,,  \mu_3(\cR,\mf{nnn}) \right\rbrace \,,
%\end{equation}
%%

The eigenvalues of $(\cT_h,\mf{abn})$, $\mf{a,b} \in \{\mf{d,n}\}$, are eigenvalues of $(\cH,\mf{n})$. Using Table~\ref{T-hNeu-6} and Remark~\ref{R-hNeu-E6b}, we can conclude that
%
%, and in particular the fact that  $\mu_3(\cR,\mf{nnn}) < \mu_4(\cR,\mf{nnn})$, and we conclude that
%%
%\begin{equation}\label{E-hNeu-44c}
%\dim \cE(\nu_6(\\cH)) \le 3\,.
%\end{equation}
%
%The numerical computations in Tables~\ref{T-hNeu-2} and \ref{T-hNeu-6} show that,
%%
%\begin{equation}\label{E-hNeu-46}
%\left\{
%\begin{array}{l}
%\mu_2(\cR,\mf{dnn}) = \mu_1(\cT_h,\mf{ndn})\,,\\[5pt]
%\mu_2(\cR,\mf{ndn}) = \mu_1(\cT_h,\mf{dnn})\,,\\[5pt]
%\mu_3(\cR,\mf{nnn}) = \mu_2(\cT_h,\mf{nnn})\,.\\[5pt]
%\end{array}
%\right.
%\end{equation}
%%
%From the second line and \eqref{E-hNeu-Claim3ab}, we conclude that
%
%
\begin{equation}\label{E-hNeu-48}
\nu_6(\cH) = \nu_7(\cH) = \mu_1(\cT_h,\mf{ndn}) = \mu_2(\cT_h,\mf{nnn})\,,
\end{equation}
and that associated eigenfunctions $u_6, u_7$  arise from the first and second eigenfunctions of $(\cT_h,\mf{ndn})$.\medskip

\begin{state}\label{Cl-hNeu-ecp}
From the numerical evidence in Remark~\ref{R-hNeu-E6b}, we conclude that $\nu_i(\cH)$ have multiplicity $2$ for $i\in \{2,4,6\}$, and that $\nu_6(\cH)$ and $\nu_7(\cH)$ arise from eigenvalues of $(\cT_h,\mf{abn})$.
\end{state}%

\subsection{Numerical computations and $\ecp(\cH,\mf{n})$}\label{SS-ch-ecp-n}

The first Neumann eigenvalue of the hexagon, $\nu_1(\cH)$, is $0$, with associated eigenfunction $u_{1} \equiv 1$. As sixth Neumann eigenfunction $u_6$ of the hexagon, we can choose the function which arises from an eigenfunction for $\mu_2(\cT_h,\mf{nnn})$, or equivalently from a $D$-invariant second eigenfunction $\psi$ of $(\cT_e,\mf{n})$. It follows from \cite[Section~3]{BH2018-ecp1} that $\ecp(\cT_e,\mf{n})$ is false, i.e., that there exists some real value $a$ such that $\psi + a$ has three nodal domains in $\cT_e$. It follows that $u_6 + a$ has seven nodal domains, so that $\ecp(\cH,\mf{n})$ is false.\medskip

Alternatively, we can look at $\mu_3(\cR,\mf{nnn}) = \mu_2(\cT_h,\mf{nnn})$. Figure~\ref{F-num-8} displays the nodal pattern and the level lines of an eigenfunction for $\mu_3(\cR,\mf{nnn})$. By reflection with respect to the lines $D_1$ and $M_2$, one obtains a Neumann eigenfunction $u_{\cH}$ of $\cH$, associated with $\nu_6(\cH)=\nu_7(\cH)$, whose nodal set is a closed simple curve around $O$, and whose level lines are displayed in Figure~\ref{F-num-10}; some level lines of $u_{\cH}$ have six connected components, one component near each vertex of the hexagon, so that $\ecp(\cH,\mf{n})$ is false.

\begin{state}\label{S-hex-ecpn}
The $\ecp(\cH,\mf{n})$ is false in $\cE(\nu_1) \oplus \cE(\nu_6)$.
\end{state}%

\begin{figure}[!htb]
  \centering
  \includegraphics[width=7cm]{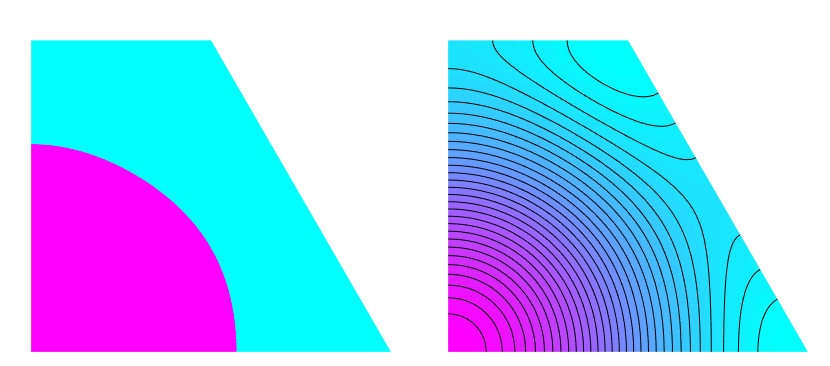}
  %%hex-neumann-8T1.pdf
  \caption{ $(\cR,\mf{nnn})$: nodal set and level lines for $u_3$}\label{F-num-8}
\end{figure}

\begin{figure}[!htb]
  \centering
  \includegraphics[width=5cm]{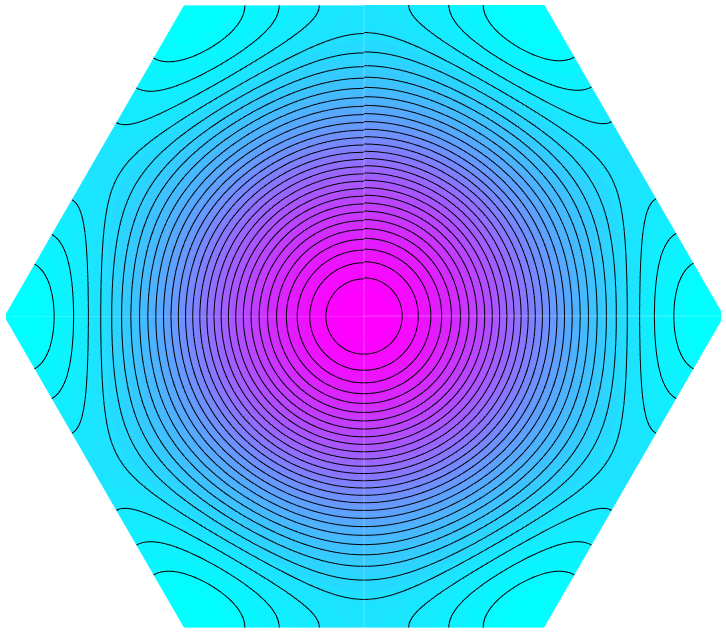}
  %%hex-neu-ef6-m.png
  \caption{Level lines of $u_{\cH}$}\label{F-num-10}
\end{figure}
\FloatBarrier

\section{Final comments}\label{S-final}

\subsection{Numerical computations}\label{SS-final-1}

In Subsection~\ref{SS-ch-ecp-n}, we used numerical approximations of the first eigenvalues of the problems $(\cR,\mf{nab})$ and $(\cT_h,\mf{abn})$ in order to identify the first eight eigenvalues of $(\cH,\mf{n})$, and to conclude that $\ecp(\cH,\mf{n})$ is false (we also used the fact that some eigenfunctions are known explicitly), see Table~\ref{F-vp-hex-neu}.

\begin{table}[!htb]
  \centering
  \includegraphics[scale=0.9]{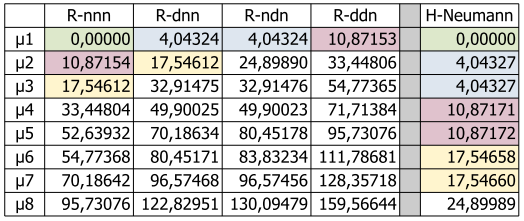}
  \caption{Neumann eigenvalues of $\cH$}\label{F-vp-hex-neu}
\end{table}

We did not find tables providing the first eigenvalues of $(\cH,\mf{n})$ in the literature. We used the symmetries, and computed the eigenvalues of the problems $(\cR,\mf{abn})$ and $(\cT_h,\mf{abn})$ with \textsc{matlab}. We checked the accuracy of our computations in two ways.
\begin{enumerate}
  \item First, using the symmetries, we computed the eigenvalues of $(\cR,\mf{abd})$ and $(\cT_h,\mf{abd})$ in order to obtain the Dirichlet eigenvalues of the hexagon. We then compared the results with the tables in \cite{CurKut1999}, see Table~\ref{F-vp-hex-dir}.
  \item Second, we computed the eigenvalues of $(\cT_h,\mf{abc})$, and compared the results both with explicitly known eigenvalues, and with the tables in \cite{Jon1993}, see Tables~\ref{F-vp-thq-dir} and \ref{F-vp-thq-neu}.
\end{enumerate}

\begin{table}[!htb]
  \centering
  \includegraphics[scale=0.9]{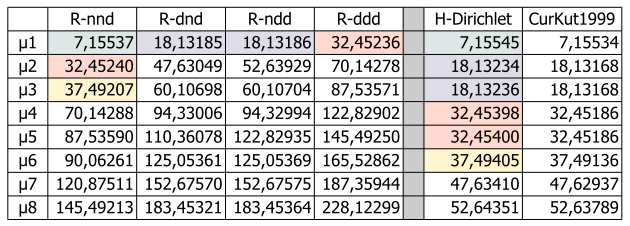}
  \caption{Dirichlet eigenvalues of $\cH$}\label{F-vp-hex-dir}
\end{table}

\begin{table}[!htb]
  \centering
  \includegraphics[scale=1.2]{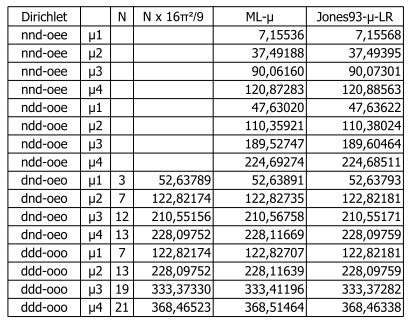}
  \caption{Eigenvalues of $\cT_h$ (Dirichlet on shortest side)}\label{F-vp-thq-dir}
\end{table}

\begin{table}[!htb]
  \centering
  \includegraphics[scale=1.2]{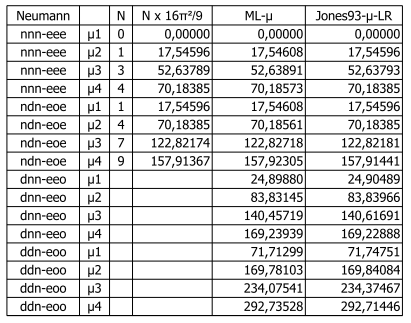}
  \caption{Eigenvalues of $\cT_h$ (Neumann on shortest side)}\label{F-vp-thq-neu}
\end{table}

\begin{remark}\label{R-vp-2}
The tables in \cite{CurKut1999,Jon1993} are organized according to the symmetries, and they provide the \emph{square roots} of the eigenvalues. In \cite{Jon1993}, the labelling of the sides of $\cT_h$ is different from ours: we use $\mf{d,n}$ to indicate the boundary condition on each side, while Jones uses the notation $e,o$ (for even and odd). For the reader's convenience, we indicate both labellings  in the first column of Tables~\ref{F-vp-thq-dir} and \ref{F-vp-thq-neu}. The fourth column of each table contains the eigenvalues which are known explicitly; the fifth column contains our computations. The sixth column of each table contains the values deduced from \cite{Jon1993}, Tables~7--14.
\end{remark}%

\begin{remark}\label{R-vp-4}
Our purpose in this paper is to identify eigenvalues, and their relations with the symmetries, not to find high precision approximations as in \cite{Jon1993,Jon2016}. The approximated values which appear in the tables indicate that the approximations are indeed sufficient to identify the eigenvalues (because we took the symmetries into consideration from the start, and identified multiple eigenvalues).
\end{remark}%

In Subsections~\ref{SS-rhe-ecp-n} and \ref{SS-ch-ecp-d}, we also used numerical approximations of the first and second eigenfunctions of $(\cT_h,\mf{nnd})$ in order to show that the $\ecp(\rhe,\mf{n})$ is false in $\cE(\nu_2)\oplus \cE(\nu_5)$, and that $\ecp(\cH,\mf{d})$ is false in $\cE(\delta_1) \oplus \cE(\delta_6)$.

\subsection{Final remarks}\label{SS-final-2}

\subsubsection{}\label{R-F1} The estimates in Table~\ref{T-DirLU-2} are valid for the regular polygon $\cP_n$ with $n$ sides, inscribed in the circle of radius $1$. The upper bounds get better when $n$ increases, and for $n\ge 9$, they are sufficient to separate $\delta_6(\cP_n)$ from $\delta_5(\cP_n)$. This shows that $\delta_6(\cP_n)$ is a simple eigenvalue for $n\ge 6$, and that an associated eigenfunction $u_6$ arises from the first eigenfunction of a right triangle with smallest angle $\frac{\pi}{n}$, hypotenuse of length $1$, with Dirichlet condition on the smallest side and Neumann condition on the other sides. Equivalently, the eigenfunction $u_6$ arises from a first eigenfunction of an isosceles triangle whose apex angle is $\frac{2\pi}{n}$, with equal sides of length $1$, Dirichlet condition on the smallest side and Neumann condition on the equal sides. Note that $\delta_6(\cD)$ corresponds to the second radial eigenfunction of the disc.\medskip

\subsubsection{}\label{R-F2} Based on our computations, we conjecture that the $\ecp(\cP_n,\mf{a})$ is false for any regular polygon $\cP_n \subset \R^2$ with $n\ge 6$ sides, and $\mf{a} \in \{\mf{d,n}\}$, with some linear combination $u_6 + a u_1$ of a sixth and a first eigenfunctions providing a counterexample with $(n+1)$ nodal domains. Using \cite[Theorem~B]{Miy2013}, one can show that $\ecp(\cP_n,\mf{n})$ is false for $n$ sufficiently large, see \cite{BCH2019}. The simulations show that the first six Dirichlet eigenfunctions of $\cP_n$ look very much like the first six Dirichlet eigenfunctions of the disk $\cD$. \medskip

\subsubsection{}\label{R-F3} The above considerations do not provide any counter-example to the $\ecp$ when the number of sides is $4$ or $5$. It is not clear whether the $\ecp$ is false for the square and for the regular pentagon.  It is not clear either whether the $\ecp$ is false for the disk.

\subsubsection{}\label{R-F4} In the Neumann case, the present paper is also relevant to the investigation of the level lines of Neumann eigenfunctions. Such investigations arise when studying the hot spots conjecture.

%\newpage
\bibliographystyle{plain}

\end{document}